\newcommand\myshade{85}
\colorlet{mylinkcolor}{violet}
\colorlet{mycitecolor}{red}
\colorlet{myurlcolor}{cyan}
\newtheorem{Thm}{Theorem}[section]\newtheorem*{Thm*}{Theorem}
\newtheorem{Lem}[Thm]{Lemma}
\newtheorem{Cor}[Thm]{Corollary}
\newtheorem{Prop}[Thm]{Proposition}
\newtheorem{Prop-Def}[Thm]{Proposition-Definition}
\newtheorem*{Conj*}{Conjecture}
\theoremstyle{definition}
\newtheorem{Ex}[Thm]{Example}
\newtheorem{Def}[Thm]{Definition}
\newtheorem{Convention}[Thm]{Convention}
\newtheorem{Assu}[Thm]{Assumption}
\newtheorem{Rem}[Thm]{Remark}
\newcommand{\w}{\widetilde}
\newcommand{\ra}{\rightarrow}
\newcommand{\D}{\mathcal{D}}
\newcommand{\cal}{\mathcal}
\newcommand{\T}{\mathcal T}
\newcommand{\A}{\mathcal A}
	\newcommand{\bb}{\mathrm{b}}
	\newcommand{\h}{{\mathrm H}}
	\renewcommand{\dim}{{\rm dim}}
	\newcommand{\xra}{\xrightarrow}
	\newcommand{\Z}{{\mathbb Z}}
	\newcommand{\op}{\oplus}
	\newcommand{\bop}{\bigoplus}
	\newcommand{\ot}{\otimes}
	\newcommand{\Hom}{\operatorname{Hom}\nolimits}
	\newcommand{\End}{\operatorname{End}\nolimits}
	\newcommand{\RHom}{\mathbf{R}\strut\kern-.2em\operatorname{Hom}\nolimits}
	\newcommand{\RshHom}{\mathbf{R}\strut\kern-0em\mathscr{H}\strut\kern-.3em\operatorname{om}\nolimits}
	\newcommand{\shHom}{\mathscr{H}\strut\kern-.3em\operatorname{om}\nolimits}
	\newcommand{\shEnd}{\mathscr{E}\strut\kern-.3em\operatorname{nd}\nolimits}
	\newcommand{\za}{\alpha}
	\newcommand{\zb}{\beta}
	\newcommand{\zd}{\delta}
	\newcommand{\zD}{\Delta}
	\newcommand{\zg}{\gamma}
	\newcommand{\zl}{\lambda}
	\newcommand{\zs}{\sigma}
	\DeclareMathOperator{\moduleCategory}{{\mathsf{mod}}} \renewcommand{\mod}{\moduleCategory}
	\DeclareMathOperator{\thick}{\mathsf{thick}}
	\DeclareMathOperator{\per}{\mathsf{per}}
	\numberwithin{equation}{section}
	\definecolor{dark-green}{RGB}{14,150,2}
	\definecolor{red}{RGB}{250,0,0}
\begin{document}
		\normalem
		\title{A complete derived invariant and silting theory for graded gentle algebras}

		\author{Haibo Jin}
		\address{Haibo Jin, School of Mathematical Sciences,  Key Laboratory of Mathematics and Engineering Applications (Ministry of Education),   Shanghai Key laboratory of PMMP,
			East China Normal University,
			Shanghai 200241, China and Department of Mathematics, Universit\"at zu K\"oln, Weyertal 86-90, 50931 K\"oln, Germany}
		\email{hbjin@math.ecnu.edu.cn}
		
		\author{Sibylle Schroll}
		\address{Sibylle Schroll, Department of Mathematics, Universit\"at zu K\"oln, Weyertal 86-90, 50931 K\"oln, Germany and
			Institutt for matematiske fag, NTNU, N-7491 Trondheim, Norway}
		\email{schroll@math.uni-koeln.de}
		
		\author{Zhengfang Wang}
		\address{Zhengfang Wang, School of Mathematics, Nanjing University, Nanjing 210093, Jiangsu,
			PR China and Institut f\"ur Algebra und Zahlentheorie, Universit\"at Stuttgart, Pfaffen\-wald\-ring 57, 70569 Stuttgart, Germany}
		\email{zhengfangw@nju.edu.cn}

		\keywords{}
		\thanks{The first author was supported by the National Key R\&D Program of China (2024YFA1013801) and by Science and Technology Commission of Shanghai Municipality (No. 22DZ2229014). The third author was supported by the National Key R\&D Program of China (2024YFA1013803), the  NSFC (Nos. 13004005 and 12371043) and  the DFG (WA 5157/1-1). The first and second authors were also supported by  the DFG through the project SFB/TRR 191 Symplectic Structures in Geometry, Algebra and Dynamics (Projektnummer 281071066--TRR 191).
			\\
			On behalf of all authors, the corresponding author states that there is no conflict of interest. }

		\date{\today}
		
		\subjclass[2020]{16E35, 
			57M50, 
			16E45} 

		
		\begin{abstract}
			
			We confirm a conjecture by Lekili and Polishchuk that the geometric invariants which they construct for homologically smooth graded (not necessarily proper) gentle algebras form a complete derived invariant. Hence, we obtain a complete invariant of triangle equivalences for partially wrapped Fukaya categories of graded surfaces with stops.  
			
			A key ingredient of the proof is the full description of homologically smooth graded  gentle algebras whose perfect derived categories admit silting objects. We also apply this to classify which graded gentle algebras admit pre-silting objects that are not partial silting. In particular, this allows us to construct a family of counterexamples to the question whether any pre-silting object in the derived category of a finite-dimensional algebra is partial silting.
		\end{abstract}

		\maketitle
		
		\tableofcontents

		\section{Introduction}
		
		Derived categories and more generally triangulated categories are important in many areas of mathematics such as algebraic geometry and representation theory of algebras. One important question arising in this context is when two of these categories are equivalent. For derived categories of finite dimensional algebras, tilting objects play an important role in answering this question, see for example \cite{Rickard89}. As a generalization of tilting objects, silting objects, introduced in \cite{KV88}, give a way to explicitly construct equivalences of categories in a more general setting, see for example \cite{Keller94,Ke98}. Since their introduction, silting objects have been found to have deep connections with other important mathematical subjects such as Bridgeland stability conditions, cluster theory, torsion theory, and $t$-structures, see, for example,  \cite{AI,AIR,CS20, IY,KoY,O, QW}. However, it is not always easy to construct silting objects or even to determine whether they exist. Indeed, in many cases silting objects may not exist.  
		For this reason, one would like to have other ways to determine whether two categories are equivalent or not. Complete derived invariants are such a way. Of particular interest are those invariants that can be calculated using, for example, geometric data associated to the category. 
		
		The main thrust of this paper is to study triangle equivalences in the following situation. Namely, we are interested in derived equivalences of graded gentle algebras, in particular, in the context of their relation to partially wrapped Fukaya categories of surfaces with stops \cite{HKK17, LP20}. 
		
		Gentle algebras first arose in the representation theory of finite dimensional algebras in \cite{AsH, AS87}. Their representation theory is governed by word combinatorics and has been well studied since their inception \cite{AS87}. 
		Remarkably, gentle algebras occur in many areas of mathematics such as cluster theory \cite{ABCP, Labardini}, N=2 gauge theories \cite{Cecotti} and most importantly for the purposes of this paper, they naturally appear in the context of homological mirror symmetry of two dimensional manifolds. Indeed, it has been shown in \cite{HKK17, LP20} that partially wrapped Fukaya categories of surfaces with stops are triangle equivalent to perfect derived categories of homologically smooth graded gentle algebras. More precisely, in \cite{HKK17} it is shown  that (graded) gentle algebras naturally appear as  endomorphism algebras of formal generators in  partially wrapped Fukaya categories associated to graded surfaces with stops. 
		Conversely, in  \cite{LP20, OPS18} it is shown that to any graded gentle algebra one may associate  a surface model in the form of  a graded surface with stops together with an admissible dissection corresponding to a  formal generator. %
		From the perspective of representation theory,  surface models provide a convenient and geometric approach to study derived categories of graded gentle algebras. 
		For instance, the indecomposable objects, morphisms and algebraic operations (e.g.\ mapping cones or mutations) between these objects correspond to natural geometric objects and operations in the associated  surface models, see \cite{CS22, HKK17, IQZ, OPS18, QZZ22}.

		In this paper, we study the silting theory for graded gentle algebras which are homologically smooth but not necessarily proper. In particular, we explicitly determine for which graded gentle algebras their perfect derived categories admit silting objects. This in turn allows us to fully characterise the existence of silting objects in terms of the surface associated to a graded gentle algebra and thus naturally allows a description of which partially wrapped Fukaya categories admit silting objects.

		As applications of this result,  
		we address two open questions.  As a first application we give a positive answer to a conjecture by Lekili and Polishchuk \cite[Remark 3.19]{LP20} giving a complete derived invariant for graded gentle algebras in terms of their surface models. This also gives a complete invariant for triangulated equivalence classes of partially wrapped Fukaya categories of graded surfaces with stops. 
		For the ungraded case a complete derived invariant based on a geometric interpretation of silting objects  is given in \cite{APS19, O19}.
		However, we point out that the  difficulty in passing from the ungraded case to the graded case is that in the graded case there might not exist a silting object in the perfect derived category  whereas in the ungraded case the algebra itself is always a silting (even tilting) object. As a second application, we provide a sufficient and necessary condition to determine for which  homologically smooth and proper graded gentle algebras all pre-silting objects in their  derived categories can be completed to silting objects or in other words, we determine for which graded gentle algebras all pre-silting objects are partial tilting (i.e.\ a direct summand of a silting object).
		Based on this, we construct an infinite family of counter examples, which includes  the example previously given in \cite{LZ23}, to the representation theoretic question of whether any pre-silting object in the derived category of a finite-dimensional algebra is  partial tilting.

		In the following we state our three main results more precisely.

		\subsection{Existence of silting objects}
		The notion of silting objects, introduced in \cite{KV88} as a generalization of tilting objects, plays an important role in the study of the representation theory of algebras. It is one of the standard tools for studying triangulated categories and it enables us to control equivalences of triangulated categories (see, for example, \cite{Keller94,Ke98, Rickard89}).

		Recall that an object $P$ in a triangulated category $\T$ is called \emph{pre-silting} if 
		$$
		\Hom_{\T}(P, P[i])=0,\quad \text{for each $i> 0$},
		$$ 
		and it is called \emph{silting} if, moreover, $\T$ coincides with  $\thick_{\T}(P)$, the smallest thick subcategory of $\T$ containing $P$ \cite{KV88}. Silting objects appear naturally in many triangulated categories. For instance, for a non-positive dg (i.e.\ non-positively graded differential) algebra $A$, the perfect derived category $\per(A)$ admits $A$ itself as a silting object. More generally,  if an algebraic, idempotent complete, triangulated category $\T$ admits a silting object then $\T$ is triangle equivalent to $\per(A)$ for some non-positive dg algebra $A$ \cite{Keller94,Keller06}. 
		
		The first result of this paper is to give a complete classification of the graded gentle algebras $A$ such that $\per(A)$ admits silting objects.

		\begin{Thm}[The contrapositive of Theorem \ref{Thm:existence_of_silting}]\label{Thm:1.1introduction}
			Let $A=\Bbbk Q/I$ be a homologically smooth graded gentle algebra.  Then $\per(A)$ admits no silting objects if and only if one of the following holds.
			\begin{enumerate}[\rm(a)]
				\item There exists a cycle $p=\alpha_1 \alpha_2\dotsb \alpha_n$ such that $\alpha_1 \alpha_{2}, \dotsc, \alpha_{n-1}\alpha_n, \alpha_n \alpha_1 \notin I$ and $|p|>0$. \label{Conditiona}
				\item As a graded algebra, $A$ is isomorphic to a graded gentle algebra given by the quiver
				\begin{align*}
					\xymatrix{ & 1 \ar@<1pc>[r]^-{\za} \ar@<-1pc>[r]^-{\zg}& 2 \ar[l]_-{\zb}} 
				\end{align*}
				with relations $\{\za\zb, \zb\zg  \}$ and satisfying  $|\za|+|\zb|=1$ and $|\zb|+|\zg|=1$. \label{Conditionb}
			\end{enumerate}
		\end{Thm}
		
		Condition \eqref{Conditiona} in Theorem \ref{Thm:1.1introduction} particularly implies that $A$ is infinite dimensional and  Condition \eqref{Conditionb} implies that $A$ is finite-dimensional. Theorem \ref{Thm:1.1introduction} generalizes the result in \cite[Theorem 5.9]{CJS}, which gives a partial classification of the homologically smooth and proper graded gentle algebras $A$ whose  $\per(A)$  admit  silting objects.

		As a corollary, we obtain the following result for homologically smooth and proper (i.e.\ finite-dimensional) graded gentle algebras.

		\begin{Cor}\label{Thm:existence_of_siltingA}
			Let $A=\Bbbk Q/I$ be a homologically smooth and proper graded gentle algebra. Then $\per(A)$ does not admit  silting objects if and only if $A$ is isomorphic to a graded gentle algebra given by the  quiver
			\begin{align*}
				\xymatrix{ & 1 \ar@<1pc>[r]^-{\za} \ar@<-1pc>[r]^-{\zg}& 2 \ar[l]_-{\zb}} 
			\end{align*}
			with relations $\{\za\zb, \zb\zg  \}$ and satisfying  $|\za|+|\zb|=1$ and $|\zb|+|\zg|=1$. 
			
		\end{Cor}
		
		In terms of partially wrapped Fukaya categories of graded surfaces with stops, Theorem \ref{Thm:1.1introduction} is translated into the following form. 
		
		\begin{Cor}[Corollary \ref{Cor:existence_of_silting}]\label{cor:existence-of-siltingA}
			Let ${\cal W} (S,M, \eta)$ be the partially wrapped Fukaya category of the graded surface with stops $(S,M, \eta)$.  Then  ${\cal W} (S,M, \eta)$  does not admit silting objects if and only if one of the following holds. 
			\begin{enumerate}[\rm(i)]
				\item There exists a non-stopped boundary component $\partial_iS$ (i.e.\ $\partial_i S \cap M = \emptyset$)  whose winding number is negative. 
				\item $S$ is the torus with one boundary component and one stop (i.e.\ $\# M =1$) such that the winding number of each non-separating simple closed curve is zero. 
			\end{enumerate}
		\end{Cor}
		The condition that the winding number of each non-separating simple closed curve is zero is equivalent to the invariant $\w{A}(\eta)$ being equal to zero, where we refer to Formula (\ref{align:wA}) in Subsection \ref{subsection:combinatorialinvariants}  for the definition of  $\w{\A}(\eta)$.

		The proof of Theorem \ref{Thm:1.1introduction}  uses the numerical invariants for line fields constructed in \cite{LP20}. Specifically, if a graded surface $(S, M, \eta)$ with stops does not satisfy Conditions  \eqref{Conditiona} and \eqref{Conditionb} of Theorem \ref{Thm:1.1introduction} then we can construct an explicit admissible dissection $\Delta$ on $S$ such that the associated gentle algebra $A(\Delta)$ is concentrated in non-positive degree, see Theorem \ref{Cor:siltingexistence}. For the converse, if a graded gentle algebra $A$ satisfies Condition \eqref{Conditiona} {\it or} \eqref{Conditionb} then  we use the invariants to show that  $\per(A)$ cannot admit silting objects, see Propositions \ref{Prop:no_silting} and  \ref{Prop:existence_of_siltingfornonproper}.

		We remark that  the perfect derived category $\per(A)$ of $A$ in Condition \eqref{Conditionb} of Theorem \ref{Thm:1.1introduction} also  does not admit simple-minded collections (SMCs), by combining Theroem \ref{Thm:1.1introduction} and \cite[Proposition 5.12]{CJS} (see also \cite{KaY, KV88}). Hence  $\per(A)$ does not admit bounded $t$-structures with length heart either. However, following recent work in \cite{BCMPZ}, it seems  that $\per(A)$ admits bounded $t$-structures which then necessarily do not have length hearts. 
		Still one could ask the question whether in this case  the Bridgeland stability manifold of $\per(A)$ is empty. By \cite[Theorem 5.3]{HKK17} this would also imply that the moduli space of $S$ is empty.

		\subsection{A complete derived invariant of graded gentle algebras and partially wrapped Fukaya categories}
		The  first application of  Theorem~\ref{Thm:1.1introduction} is to give a complete invariant for triangle equivalences of partially wrapped Fukaya categories of graded surfaces with stops in terms of their geometric surface models. This confirms a conjecture in \cite{LP20}, where the invariant was first given. We do this by working with the derived categories of the  associated graded gentle algebras and their surface models.

		Classifying algebras up to derived equivalence is a difficult task. Even for small classes of algebras that are closed under derived equivalence, this is difficult and often additional information such as geometric invariants is necessary such as in the case of ungraded gentle algebras and Brauer graph algebras, see, for example, \cite{Am, AG, APS19, O19, OZ}.  In the case of ungraded gentle algebras, using the surface model of their derived category described in \cite{OPS18}, a complete derived invariant is described in \cite{APS19,O19}. In the case of graded gentle algebras while partial results exist \cite{HKK17, LP20}, the construction of a complete derived invariant is still open. More precisely, building on \cite{HKK17}, for general graded gentle algebras, a sufficient condition for two algebras to be derived equivalent is given in \cite{LP20} in terms of winding numbers of simple closed curves. However, the converse is still open and is conjectured to be true in \cite[Remark 3.19]{LP20}. 
		
		The second result of our paper is to confirm the conjecture by showing the following result.
		
		\begin{Thm}[Theorem \ref{Thm:derivedeq}]
			\label{Thm:derivedeqA}
			Let $A$ and $B$ be two homologically smooth graded gentle algebras with associated surface models  $(S_{A}, M_{A}, \zD_{A}, \eta_{A})$ and
			$(S_{B}, M_{B}, \zD_{B}, \eta_{B})$,  respectively. Then $A$ and $B$ are derived equivalent if and only if there exists an orientation preserving homeomorphism $\varphi: S_{A}\ra S_{B}$ such that $\varphi(M_{A})=M_{B}$  and $\varphi_{*}(\eta_{A})$ is homotopic to $\eta_{B}$.
		\end{Thm}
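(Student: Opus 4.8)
The plan is to deduce the statement from Theorem~\ref{Thm:existence_of_siltingA} together with the dictionary, recorded in \cite{OPS18} and Proposition~\ref{Prop:behaviorsiltingobjects}, between silting objects in the perfect derived category of a graded gentle algebra and graded admissible dissections of its surface model. The ``if'' direction requires no new work: an orientation preserving homeomorphism $\varphi\colon S_A\to S_B$ with $\varphi(M_A)=M_B$ and $\varphi_*(\eta_A)$ homotopic to $\eta_B$ makes the winding numbers of all closed curves coincide, so $A$ and $B$ are derived equivalent by the sufficient condition of \cite{LP20} (which in turn builds on \cite{HKK17}); alternatively one observes that $\Delta_A$ and $\varphi^{-1}(\Delta_B)$ become two graded admissible dissections of a single graded marked surface and are therefore joined by a finite chain of flips, each realising a silting mutation.

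For the converse assume $\per(A)\simeq\per(B)$, and suppose first that $A$ is not isomorphic to the exceptional algebra of Theorem~\ref{Thm:existence_of_siltingA}. Then $\per(A)$, hence $\per(B)$, admits a silting object; pick one, say $Q\in\per(B)$, and transport it through a triangle equivalence to a silting object $P\in\per(A)$. Since homological smoothness and properness are derived invariants and, by \cite{OPS18, LP20} and Proposition~\ref{Prop:behaviorsiltingobjects}, the endomorphism algebra of a silting object in such a perfect derived category is again graded gentle, $C:=\End_{\per(A)}(P)\cong\End_{\per(B)}(Q)$ is a homologically smooth and proper graded gentle algebra. By the same dictionary, $P$ corresponds to a graded admissible dissection $\Delta_P$ of $(S_A,M_A,\eta_A)$ with $(S_A,M_A,\Delta_P,\eta_A)$ the surface model of $C$, and likewise $Q$ corresponds to $\Delta_Q$ with $(S_B,M_B,\Delta_Q,\eta_B)$ the surface model of $C$. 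As the surface model of a graded gentle algebra is well defined up to an orientation preserving homeomorphism matching marked points, dissections and line fields up to homotopy, there is an orientation preserving homeomorphism $\varphi\colon S_A\to S_B$ with $\varphi(M_A)=M_B$, $\varphi(\Delta_P)=\Delta_Q$ and $\varphi_*(\eta_A)$ homotopic to $\eta_B$, which is the homeomorphism sought.

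Finally, if $A$ is isomorphic to the exceptional algebra of Theorem~\ref{Thm:existence_of_siltingA}, then $\per(A)$, and hence $\per(B)$, admits no silting object, so by Theorem~\ref{Thm:existence_of_siltingA} $B$ is of the same form, and $A$ and $B$ both lie in the unique silting-free derived equivalence class of graded gentle algebras of type $A^{(2)}$ with $|\za|+|\zb|=|\zb|+|\zg|=1$. For these algebras the topological surface, its marked points and the homotopy class of the line field --- computed from the explicit winding number invariants of \cite{LP20} --- are determined in Section~\ref{section:gradedgentlealgebraAn}, and one checks they are independent of the admissible degrees, so $(S_A,M_A,\eta_A)$ and $(S_B,M_B,\eta_B)$ again agree up to an orientation preserving homeomorphism. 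The main obstacle is the non-exceptional case: the content is precisely that Proposition~\ref{Prop:behaviorsiltingobjects} and \cite{OPS18} let an abstract triangle equivalence, which carries no geometry a priori, be upgraded to a homeomorphism of graded marked surfaces by using a silting object --- available thanks to Theorem~\ref{Thm:existence_of_siltingA} --- as a common geometric anchor; the exceptional case is a finite computation already contained in the analysis of $A^{(n)}$.
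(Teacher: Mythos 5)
Your proposal is correct and follows essentially the same route as the paper: the ``if'' direction is quoted from \cite{HKK17,LP20}, the converse in the silting case uses Theorem \ref{Thm:existence_of_silting}, Proposition \ref{Prop:behaviorsiltingobjects} and the uniqueness of the surface model (Proposition \ref{Prop:models_same_algebra}) with the graded endomorphism algebra of a transported silting object as the common anchor, and the silting-free case reduces to the class $(1,1)$ of the form $A^{(1)}$ whose line-field invariants force the models to agree. Only two cosmetic slips: the exceptional algebra is of the form $A^{(1)}$, not $A^{(2)}$, and the common algebra should be the total graded endomorphism algebra $\bigoplus_{i\in\Z}\Hom(P,P[i])\cong \h^{*}(\shEnd(P))$ rather than the degree-zero $\End_{\per(A)}(P)$.
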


		Silting objects play a key role in the construction of derived equivalences in representation theory as mentioned above. Regardless of this, another motivation to study silting objects for graded gentle algebras is the following remarkable geometric property (see Proposition \ref{Prop:behaviorsiltingobjects} and refer to \cite{OPS18} for the ungraded case)  in terms of  surface models: \begin{center}{\it Silting objects give rise to admissible dissections.} \end{center} 
		Based on this, we establish a direct link between derived equivalences and the surface models of the corresponding graded gentle algebras, which plays an essential role in the proof of Theorem \ref{Thm:derivedeqA}.

		By \cite{HKK17} and \cite{LP20}, Theorem \ref{Thm:derivedeqA} can be translated into the language of partially wrapped Fukaya categories and in this way it also fully characterizes the triangle equivalence classes of partially wrapped Fukaya categories of graded surfaces with stops.

		\begin{Cor}
			\label{Cor:derivedeqA}
			Let  $(S_{1}, M_{1}, \eta_{1})$ and
			$(S_{2}, M_{2}, \eta_{2})$ be two graded surfaces with stops. Then the partially wrapped Fukaya categories $\cal W(S_{1}, M_{1}, \eta_{1})$ and $\cal W(S_{2}, M_{2}, \eta_{2})$ are triangle equivalent if and only if there exists an orientation preserving homeomorphism $\varphi: S_{1}\ra S_{2}$ such that $\varphi(M_{1})=M_{2}$  and $\varphi_{*}(\eta_{1})$ is homotopic to $\eta_{2}$.
		\end{Cor}
		
		Combining  Theorem~\ref{Thm:derivedeqA} and \cite[Corollary 1.10]{LP20}, we show that the geometric invariants of the surface models of graded gentle algebras considered in \cite{LP20} form a complete derived invariant. More precisely, we have the following result, where we refer to Subsection  \ref{subsection:combinatorialinvariants} for more details and definitions.
		\begin{Cor}[Corollary \ref{Cor:complete}]
			Let $A$ and $B$ be two homologically smooth graded gentle algebras with associated surface models  $(S_{A}, M_{A},\eta_{A},  \zD_{A})$ and $(S_{B}, M_{B}, \eta_{B}, \zD_{B})$,  respectively. Then $A$ and $B$ are derived equivalent if and only if $S_A$ and $S_B$ have the same number $b$ of boundary components and there exists a numbering of these boundary components such that for each $0\leq i < b$, one has
			\[\#(M_{A}\cap\partial_{i} S_{A})= \#(M_{B}\cap\partial_{i} S_{B}),\]
			\[w_{\eta_{A}}(\partial_{i} S_{A})=w_{\eta_{B}}(\partial_{i} S_{B}),  \]
			and in addition,
			\begin{itemize} 
				\item if $g(S_{A})=g(S_{B})=1$, then $\w{\mathcal A}(\eta_{A})=\w{\mathcal A}(\eta_{B})$; 
				\item if $g(S_{A})=g(S_{B}) >1$, then $\zs(\eta_{A})=\zs(\eta_{B})$ and $\mathcal A(\eta_{A})=\mathcal A (\eta_{B})$ whenever the latter two invariants are defined.
			\end{itemize}
		\end{Cor}
		Here,  $\partial_{0}S, \dots, \partial_{b-1}S$ are the connected boundary components of the surface $S$ and $w_{\eta}(\partial_{i}S)$ is the winding number of the boundary component $\partial_{i}S$. We note that the invariants  $\w{\mathcal A}, \sigma, \mathcal A$ defined in Subsection \ref{subsection:combinatorialinvariants} are expressed in terms of the winding numbers of boundary components and  some non-separating simple closed curves.
		
		\subsection{Pre-silting versus  partial silting for graded gentle algebras } 
		
		As a second application of Theorem \ref{Thm:1.1introduction}, 
		we provide a family of counterexamples to the natural question in representation theory whether each pre-silting object in the derived category of a finite-dimensional algebra is partial silting.  We do this by completely classifying for which graded gentle algebras every pre-silting object can be completed to a silting object, see Theorem \ref{Thm:partialsiltingmainresultA} below. 
		
		Recall that a pre-silting object $P$ in a triangulated category $\T$ is called \emph{partial silting} if there exists $Q$ such that $P\op Q$ is silting in $\mathcal T$. 
		Assume $\T$ admits silting objects. 
		The question whether any pre-silting object in $\T$ is partial silting has been of great interest, especially, for the case that $\T=\per(\Lambda)$ for some finite-dimensional algebra $\Lambda$. 
		A  Bongartz-type lemma which states that any $2$-term pre-silting object is partial silting \cite{Aihara13,B} gives a positive answer to this question in that case. 
		It has also been shown to be true  for  piecewise hereditary algebras \cite{AI, BY} and for silting-discrete algebras \cite{AM}, including local algebras \cite{AI}, and representation-finite symmetric algebras \cite{Aihara13, AbH}.

		Using Theorem \ref{Thm:1.1introduction} and the technique of silting reduction,  we give a complete answer to the question above for the derived category of 
		any homologically smooth and proper graded gentle algebra.

		\begin{Thm}[Theorem \ref{Thm:partialsiltingmainresult}]\label{Thm:partialsiltingmainresultA}
			Let $A$ be a homologically smooth and proper graded gentle algebra. Let $(S, M, \eta)$ be a surface model of $A$. Then the following are equivalent
			\begin{enumerate}[\rm(i)]
				\item Every pre-silting object in $\per(A)$ is partial silting.
				\item Either $S$ is of genus $0$,  or $S$ is of genus $1$ such that  
				\[ \w{\A}(\eta)\not=\gcd\{w_{\eta}(\partial_{0}S)+2, \dots, w_{\eta}(\partial_{b-1}S)+2\}. \]
			\end{enumerate}
		\end{Thm}

		In particular, Theorem~\ref{Thm:partialsiltingmainresultA} shows that for almost all homologically smooth and proper graded gentle algebras $A$, $\per(A)$ admits pre-silting objects which are not partial silting, the exceptions being the graded gentle algebras associated to surfaces of genus $0$ and certain graded surfaces with stops of genus $1$. 
		
		This immediately gives us the following result for finite dimensional (ungraded) gentle algebras.
		
		\begin{Cor}[{Corollary \ref{cor:finitedimensionalgentle}}]
			Let $A$ be a finite dimensional (ungraded)  gentle algebra of finite global dimension. If the genus of the  surface of $A$ is strictly greater than $1$ then there exists a pre-silting object in $\per(A)$ which is not partial silting.
		\end{Cor}
		As a consequence, it is easy to construct finite dimensional algebras whose derived categories admit pre-silting objects which are not partial silting. For example, in   Example \ref{Ex:notpartial1}, we give an infinite family  of such finite dimensional algebras. We note that this family includes the example previously  given  in 
		\cite{LZ23}.

		\section*{Conventions}\label{Conventions}
		In this paper, all algebras will be assumed to be over a base field $\Bbbk$. 
		Arrows in a quiver are composed from left to right as follows: for arrows $a$ and $b$ we write $ab$ for the path from the source of $a$ to the target of $b$. However, we adopt the convention that maps are composed from right to left, that is if $f: X \to Y$ and $g: Y \to Z$ then $gf : X \to Z$.  
		
		Let $\T$ be a triangulated $\Bbbk$-category and let $P\in \T$. We denote by $\thick_{\T}(P)$ the smallest full  triangulated  subcategory of $\T$  containing  $P$  and closed under direct summands. For a dg algebra $A$,  we denote by $\per(A):=\thick_{\D(A)}(A)$ the \emph{perfect derived category} of $A$. We denote by $\D^{\bb}(A)$ the full subcategory of $\D(A)$ consisting of the objects $M$ whose total cohomology is finite-dimensional (that is, $\sum_n \dim_{\Bbbk}\; \h^{n}(M)<\infty$).  If $\per (A)\subset \D^{\bb}(A)$, we say $A$ is \emph{proper}. Note that a proper graded algebra (a dg algebra with trivial differential) is finite dimensional. If $A\in \per (A\ot_{\Bbbk}A^{\rm op})$, we say $A$ is \emph{homologically smooth}.
		Note that if $A$ is homologically smooth and proper, we have $\per(A)=\D^{\rm b}(A)$. All (differential graded) modules considered here are right modules.

		\section{Graded gentle algebras and partially wrapped Fukaya categories}

		\subsection{Graded gentle algebras and their surface model}\label{subsection:marked surfaces}
		This subsection provides a brief overview of the surface model associated with a graded gentle algebra. We begin by revisiting the definition of a graded gentle algebra.
		
		\begin{Def}\label{definition:gentle algebras}
			A   \emph{graded gentle algebra}  is a graded algebra of the form  $A= \Bbbk Q/I$  where $Q=(Q_0,Q_1)$ is a finite graded  quiver (i.e.\ each arrow is assigned an integer) and $I$ is an ideal  of $\Bbbk Q$ generated by paths of length two such that 
			\begin{enumerate}[\rm(1)]
				\item each vertex in $Q_0$ is the source of at most two arrows and the target of at most two arrows;
				
				\item for each arrow $\za$ in $Q_1$, there is at most one arrow $\zb$ such that  $0 \neq \za\zb\in I$ and at most one arrow $\zg$ such that  $0 \neq \zg\za\in I$. Furthermore,   there is at most one arrow $\zb'$ such that $\za\zb'\notin I$ and at most one arrow $\zg'$ such that $\zg'\za\notin I$.
				
			\end{enumerate}
		\end{Def}
		
		Throughout this paper, all graded gentle algebras are supposed to be homologically smooth (i.e.\ no cycles with full relations) unless stated otherwise.
		
		\begin{Def}\label{definition: marked surface}
			A \emph{graded surface with stops} is a triple $(S,M,  \eta)$ where
			\begin{enumerate}[\rm(1)]
				\item $S$ is a compact oriented surface with nonempty boundary  $\partial S=\sqcup_{i=0}^{b-1}\partial_i S$ where each $\partial_i S$ is a connected boundary component;
				\item 
				$M$ is a finite set of points (called {\it stops}) on $\partial S$ such that $M \neq \emptyset;$
				\item $\eta$ is a line field on $S$, i.e.\ a section of the projectivized tangent bundle $\mathbb{P}(T S)$. 
			\end{enumerate}
		\end{Def}
		
		\begin{Rem}
			If $\partial_iS \cap M = \emptyset$ then $\partial_iS$ is called a {\it boundary component without stops} or, for short, a {\it non-stopped boundary component}. Note that in  \cite{APS19, OPS18}  this corresponds to a green puncture. We call a boundary component with stops a {\it stopped boundary component}. 
			
			Since by assumption $M$ is a finite set there are no fully stopped boundary components in $S$, which correspond to red punctures in \cite{APS19, OPS18}.
		\end{Rem}
		
		\begin{Convention}
			
			Throughout the paper, we fix the orientation of a surface such that,  when drawn locally in the plane, the interior lies to the left as one follows the boundary.
			
			By a curve $\gamma$ in a graded surface with stops we mean a continuous map $\gamma \colon [0,1] \to S\backslash M$ such that $\gamma(t) \notin \partial S$ for $0< t < 1$. It is called {\it closed} if $\gamma(0)= \gamma(1)\notin  \partial S$ and {\it open} if $\gamma(0), \gamma(1)$ are distinct points  in $\partial S$.  All curves are considered up to homotopy. Furthermore, all  intersections of curves are  to be transversal.
		\end{Convention}

		\begin{Def}\label{definition:arcs}
			Let $(S,M, \eta)$ be a graded surface with stops.
			\begin{enumerate}[\rm(1)]
				\item An \emph{arc} is an embedded open curve (with distinct endpoints  in $\partial S \backslash M$), such that it is not isotopic to a boundary interval without stops. 
				
				\item An \emph{admissible dissection} $\zD$ on $S$ is a collection of  arcs  on $S$, which are pairwise non-intersecting in $S$, such that these arcs cut $S$ into polygons each of which contains exactly one stop, see Figure \ref{Fig:cutpolygon}.  We note that $\eta$ imposes an integer grading on the boundary segments and we may and will assume that  $\eta$ is transverse to each of the arcs in $\Delta$, compare Remark \ref{Rem:uptohomotopy} (2).
				\item An {\it admissible collection} is a collection of arcs on $S$ which can be completed into an admissible dissection. 
			\end{enumerate}
		\end{Def}
		
		\begin{figure}[ht]
			\begin{tikzpicture}[scale=0.4, transform shape]
				\begin{scope}[decoration={markings, mark=at position 0.5 with {\arrow{>}}}] 
					\foreach \x in {0,1,...,7}
					{
						\draw[thick, green] (\x*40-70:5) to (\x*40-30:5);
					}
					\foreach \x in {1,..., 7}
					{\begin{scope}[shift={(\x*40-70:5)}, rotate=\x*40+20]
							\draw[white, line width=5pt] (0,0) to (20:.5);
							\draw[white, line width=5pt] (0,0) to (160:.5);
							\draw[bend left, postaction={decorate}] (160:.5) to (20:.5);
						\end{scope}
					}
					\foreach \x in {1,..., 8}
					{\begin{scope}[shift={(\x*40-70:5)}, rotate=\x*40+20]
							\coordinate (a\x) at (160:1.5);
						\end{scope}
					}
					\draw[] (-70:5) to [curve through = {.. (-90:4)..}] (-110:5);
					\node[fill,circle, red, inner sep=0pt, minimum size=6pt] at (-90:4){};
				\end{scope}
			\end{tikzpicture}
			\caption{The polygon cut out by (green) arcs, where the non-coloured edges with orientation are boundary segments in $\partial S$. It contains one stop on  one of the boundary segments. }
			\label{Fig:cutpolygon}
		\end{figure}

		\begin{Rem}\label{Rem:admissiblecollection}
			Note that a collection of pairwise non-intersecting arcs forms an admissible collection if and only if these arcs do not enclose a subsurface $S'$ without stops on its boundary $\partial S'$, see \cite[Definition 1.9]{APS19}. 
			It is shown in \cite[Proposition 3.12]{APS19} that each admissible dissection of $S$ contains exactly $\# M+b+2g-2$ arcs, where $b$ is the number of boundary components and $g$ is the genus of $S$. 
		\end{Rem}

		\begin{Def}\label{definition: dissection}
			Let $(S,M, \eta)$ be a graded surface with stops.
			Let $\zD$ be an admissible dissection. We construct a graded gentle algebra  $A(\zD)=\Bbbk Q(\zD)/I(\zD)$ as follows.
			\begin{enumerate}[\rm(1)]
				\item The vertices of the quiver $Q(\zD)$ are in one-to-one correspondence with the arcs in $\zD$;
				\item  For two arcs $\ell_i$ and $\ell_j$ in $\zD$, an arrow $\za$ from $\ell_i$ to $\ell_j$ is given by a {\it minimal} embedded interval $c\subset (\partial S\backslash M)$ starting at  an endpoint of $\ell_i$ and ending at an endpoint of $\ell_j$, following  the orientation of the boundary. The degree of $\alpha$ is given by $-w_\eta(c)$. By minimal we mean that the interior of $c$ does not cross any arcs in $\Delta$. 
				\item  The ideal $I(\zD)$ is generated by the length two paths of the form $\za\zb$  for all minimal boundary segments  $\za:\ell_i\rightarrow\ell_j$ and  $\zb:\ell_j\rightarrow\ell_k$  intersecting $\ell_j$ at the different ends of $\ell_j$. See Figure \ref{figure:relations} for the general configuration to have relations. 
			\end{enumerate} 
		\end{Def} 
		
		Note that $A(\Delta)$ is homologically smooth since $M$ is a finite set (i.e.\ there are no fully stopped boundary components on $S$). It is proper if and only if each $\partial_i S\cap M$ is nonempty (i.e.\ there are no non-stopped boundary components), see e.g.\ \cite{OPS18, APS19, LP20, HKK17}.  
		
		\begin{figure}
			\centering
			\begin{tikzpicture}
				\begin{scope}[shift={(-6,0)}]
					\draw[thick] (-1,0) to [curve through = {.. (-0.5, 0.25)..(0,.35)..(0.5,0.25)..}] (1,0);
					\draw [green, thick] (-.5,.25) to (120:1.5);
					\draw [green, thick] (.5,.25) ..controls (1.5,.5).. (2.5,0.25);
					\draw [thick, decoration={markings, mark=at position 0.1 with {\arrow{>}}}, postaction={decorate}] (0,.35) to (0.01,.35) ;
					\node at (140:1) {\tiny$\ell_i$};
					\node at (1.5,0.6) {\tiny$\ell_j$};
					\node at (0, .6) {\tiny$\za$};
					\begin{scope}[shift={(3,0)}]
						\draw[thick] (-1,0) to [curve through = {.. (-0.5, 0.25)..(0,.35).. (0.5,0.25)..}] (1,0);\draw [green, thick] (0.5,.25) to (60:1.5);
						\draw [thick, decoration={markings, mark=at position 0.1 with {\arrow{>}}}, postaction={decorate}] (0,.35) to (0.01,.35) ;
						\node at (40:1) {\tiny$\ell_k$};
						\node at (0,.6) {\tiny$\zb$};
					\end{scope}
				\end{scope}
			\end{tikzpicture}
			\caption{The general local picture for relations $\alpha \beta$, where the two boundary components are not necessarily pairwise distinct. 
			}\label{figure:relations}
		\end{figure}

		\begin{Def}
			Let $A$ be a graded gentle algebra. We say that $(S, M, \eta, \zD)$ is a {\it surface model} of $A$ if $A(\zD)$ is isomorphic to $A$ as graded algebras. 
			
			Let $A$ and $A'$ be two graded gentle algebras. Let $(S, M, \eta, \zD)$ (resp.\ $(S', M', \eta', \zD')$) be a  surface model of $A$ (resp. $A'$). We say that the two surface models are {\it isomorphic} if there is an orientation preserving homeomorphism $\varphi\colon S \to S'$ such that $\varphi(M) = M'$ and $\varphi_*(\eta)$ is homotopic to $\eta'$. (Note that here we do not require any relation between $\varphi(\Delta)$ is and $\Delta'$, compare with Proposition \ref{Prop:models_same_algebra} below.)
		\end{Def}
		
		\begin{Rem}
			By \cite{LP20, OPS18} any graded gentle algebra admits at least one surface model. In particular, the surface may be obtained by thickening the ribbon graph associated to the (ungraded) gentle algebra, see \cite[Subsection 3.1]{Sc}. 
		\end{Rem}
		
		\subsection{Partially wrapped Fukaya categories} We recall the partially wrapped Fukaya category associated to a graded surface with stops as defined in \cite{HKK17}. 
		
		Let $(S, M, \eta)$ be a graded surface with stops. The partially wrapped Fukaya category ${\cal W} (S,M, \eta)$, which can be understood in terms of the global sections of a constructible cosheaf on a ribbon graph associated to $S$, is a triangulated category whose indecomposable objects are described by the isotopy classes of admissible curves (with indecomposable local system), see \cite[Theorem 4.3]{HKK17}. It is also shown in \cite{HKK17} that any admissible dissection $\Delta$ is a formal generator of $\cal W(S, M, \eta)$, so that $\cal W(S, M, \eta)$ is triangle equivalent to the perfect derived category $\per(A(\Delta))$ of the graded gentle algebra $A(\Delta)$, see Definition \ref{definition: dissection}. As a result, we obtain a geometric description of objects in $\per(A(\Delta))$. 
		
		In particular, the following holds, which can be used to construct derived equivalences of graded gentle algebras.

		\begin{Thm}[{\cite[Proposition 3.3]{HKK17}}]\label{thm:hkkequivalence}
			Let $\zD_{1}$ and $\zD_{2}$ be two admissible dissections of a graded surface with stops $(S,M,\eta)$.  Then $\per(A(\zD_{1}))$ is triangle equivalent to $\per(A(\zD_{2}))$. 
		\end{Thm}

		Now we give a characterization of a special type of admissible curves of  $S$, which will play an important role in the proofs of the main results in this paper. 
		\begin{Prop}\label{Prop:kxcurve}
			Let $(S, M, \eta)$ be a graded surface with stops. Assume that $\gamma$ is an admissible curve in $(S, M, \eta)$. Denote by $X_\gamma$ the corresponding indecomposable object   in $\mathcal W(S, M, \eta)$. Then $\End(X_\gamma) \simeq \Bbbk [x]$ if and only if $\gamma$ is an arc (particularly, no self-intersections) connecting a stopped boundary component to a non-stopped boundary component $\partial_iS$. In this case, $|x|=-w_\eta(\partial_iS).$
		\end{Prop}
		\begin{proof}
			Let us first prove the \lq if' part. We extend the arc $\gamma$ into an admissible  dissection $\Delta$ and denote by $e_i$ the idempotent of $A(\Delta)$ corresponding to $\gamma$. Then we have $\End(X_\gamma) \simeq e_i A(\Delta)e_i \simeq \Bbbk[x] $.
			
			Let us prove the \lq only if' part. Since  $\End(X_\gamma) \simeq \Bbbk [x]$ it follows from Remark \ref{remark:appendix} that $\gamma$ cannot be a closed curve and from \cite[Theorem 4.11]{QZZ22} that $\gamma$ has no self-intersections.
			In other words, $\gamma$ can only be a curve without self-intersections connecting two boundary components. There are three cases: If $\gamma$ connects two stopped boundary components then $\End(X_\gamma) \simeq \Bbbk$; If $\gamma$ connects two non-stopped boundary components then $\End(X_\gamma) \simeq \Bbbk [x, y]/(xy, yx).$ If $\gamma$ connects a stopped boundary component with a non-stopped boundary component then $\End(X_\gamma) \simeq \Bbbk [x]$. 
			
			In this case, by Definition \ref{definition: dissection}, we have $|x|=-w_\eta(\partial_iS)$.
		\end{proof}
		
		\begin{Cor}\label{Cor:kxcurve}
			Let $(S,M,\eta, \zD)$ and  $(S',M',\eta', \zD')$ be two graded surfaces with stops. Assume that there is a triangle equivalence $F:\mathcal W(S,M,\eta) \ra \mathcal W(S', M', \eta')$. Let $\gamma$ be an arc  in $S$ connecting a stopped boundary segment to a non-stopped boundary component, which corresponds to an indecomposable object $X_\gamma$ in $\mathcal W(S,M,\eta)$. Then the curve corresponding to $F(X_\gamma)$ is an arc of the same type (as $\gamma$)  in $(S',M',\eta')$.
		\end{Cor}

		\begin{proof}
			By Proposition \ref{Prop:kxcurve}, we have that  $\End(X_\gamma)\simeq \Bbbk[x]$. Since $F$ is a triangle equivalence it follows that $\End(F(X_\gamma)) \simeq \Bbbk [x]$. Then again by Proposition \ref{Prop:kxcurve} the curve corresponding to $F(X_\gamma)$ also connects a stopped boundary component to a non-stopped boundary component in $S'$.
		\end{proof}

		\section{Geometric invariants}\label{Preliminaries}
		In this section we recall the surface models and geometric invariants of graded gentle algebras based on \cite{HKK17,LP20, OPS18,APS19}.
		
		\subsection{Combinatorial geometric invariants of a line field} \label{subsection:combinatorialinvariants}
		In this subsection, we recall from \cite{LP20} the combinatorial invariants of a line field in a surface. For details see \cite[Section 1]{LP20}.

		Let $S$ be a compact oriented surface with boundary  $\partial S=\sqcup_{j=0}^{b-1}\partial_j S$. Let $\bar S$ be the closed surface obtained from $S$ by filling in each boundary component. The Poincar\'e duality for $\bar S$ induces a nondegenerate skew-symmetric pairing $\langle\cdot,\cdot\rangle$ on $\mathrm{H}_1(\bar S)\simeq \mathbb Z^{2g}$,  where $g=g(S)$ is the genus of $S$.

		We fix a collection of simple closed curves $s_1, \dotsc, s_g, t_1, \dotsc, t_g$ on $S$ such that their homotopy classes form a  symplectic  basis of  $\mathrm{H}_1(\bar S)$ such that $\langle s_i, s_j \rangle = 0 = \langle t_i, t_j \rangle$ and $\langle s_i, t_j \rangle = \delta_{i,j}$ for $1\leq i, j \leq g$. 
		For any line field $\eta$,  the following sequence of integers 
		\begin{align}\label{align:allthewindingnumbers}
			W_\eta:= \{w_{\eta}(\partial_0 S),\dotsc, w_{\eta}(\partial_{b-1} S), w_{\eta} (s_1), \dotsc, w_{\eta}(s_g), w_{\eta}(t_1),\dotsc, w_{\eta}(t_g)\}
		\end{align}
		determines the orbit of $\eta$ under the action of the mapping class group of $S$ by \cite{LP20}.  
		
		For a line field $\eta$, we  define the $\mathbb Z/2\mathbb Z$-valued invariant $\sigma(\eta)$,  which indicates whether the line field comes from a vector field (i.e.\ $\sigma(\eta) = 0$) 
		\begin{align}\label{align:sigmainvariant}
			\sigma(\eta) = \begin{cases}
				0 & \text{if  all the integers in $W_\eta$ are even,}\\
				1 & \text{otherwise.}
			\end{cases}
		\end{align}
		When $g=1$, we also define
		\begin{align}\label{align:wA}
			\w{\mathcal A}(\eta):=\gcd( w_{\eta}(s_1), w_{\eta}(t_1), w_{\eta}(\partial_{0}S)+2, \dots, w_{\eta}(\partial_{b-1}S)+2). 
		\end{align}
		Note that for integers $n_{1}, \dots, n_{k}$, the  $\gcd(n_1, \dotsc, n_k)$ is defined as the greatest common divisor of the absolute values of the $n_{i}$ and it is zero if and only if $n_i = 0$ for each $1\leq i \leq k$.
		
		\begin{Thm}[{\cite[Corollary 1.10]{LP20}}]
			\label{Prop:homotopic_linefield}
			Let $(S_{1}, M_{1}, \eta_{1})$ and $(S_{2}, M_{2}, \eta_{2})$ be two graded surfaces with stops which have the same number $b$ of boundary components. Then there exists an orientation preserving homeomorphism $\varphi: S_{1}\ra S_{2}$ such that $\varphi(M_{1})=M_{2}$ and $\varphi_{*}(\eta_{1})$ is homotopic to $\eta_{2}$ if and only if  
			there exists  a numbering of the boundary components of $S_{1}$ and $S_{2}$ such that for each boundary component one has
			\[\#(M_{1}\cap\partial_{j} S_{1})= \#(M_{2}\cap\partial_{j} S_{2}),\]
			\[w_{\eta_{1}}(\partial_{j} S_{1})=w_{\eta_{2}}(\partial_{j} S_{2}),  \]
			and in addition,
			\begin{itemize}
				\item if $g(S_{1})=g(S_{2})=1$, then $\w{\mathcal A}(\eta_{1})=\w{\mathcal A}(\eta_{2})$;
				\item if $g(S_{1})=g(S_{2})>1$, then one of the following three conditions occurs
				\begin{enumerate}[\rm(a)]
					\item $\sigma(\eta_1) = \sigma(\eta_2) =1$;
					\item $\sigma(\eta_1) = \sigma(\eta_2) = 0$ and there exists $0\leq i < b$ such that $w_{\eta_1}(\partial_i S) = w_{\eta_2}(\partial_i S) = 0 \ \mod 4$;
					\item $\sigma(\eta_1) = \sigma(\eta_2) = 0$, for each  $0\leq i < b$ we have $w_{\eta_1}(\partial_i S) = w_{\eta_2}(\partial_i S) = 2 \ \mod 4$, and the Arf invariants for $\eta_1$ and $\eta_2$ coincide, that is
					\[
					\mathcal A(\eta_1) = \mathcal A(\eta_2)
					\]
					where the Arf invariant is defined as follows 
					\begin{align}\label{align:Arfinvariant}
						\mathcal A(\eta_j) := \sum_{i=1}^g \left (\frac{1}{2} w_{\eta_j}(s_i) +1\right) \left(\frac{1}{2} w_{\eta_j}(t_i) +1\right) \ \mod 2,
					\end{align}
					for $j=1,2$.
				\end{enumerate}
			\end{itemize}
		\end{Thm}

		\begin{Rem}\label{Remark:linefieldinvariant}
			\begin{enumerate}
				\item The invariants $\sigma, \w{\mathcal A}$ and $\mathcal A$ are independent of the choice of the symplectic basis of $\mathrm H_1(\bar S)$ given by the simple closed curves $s_i, t_i$,  and $\w{\mathcal A}$ can also be calculated by 
				\[ \w{\mathcal A}(\eta)=\gcd(\{w_{\eta}(\zg)\mid \zg \mbox{ non-separating simple closed curve}\}),\]
				see  \cite[Lemma 2.6]{K} and \cite{LP20}. The invariant  $\w{\mathcal A}(\eta)$  for the torus with one boundary component already appears in earlier in \cite{Am} where  examples of gentle algebras with the same AG-invariant \cite{AAG08} that are not derived equivalent are considered and where the invariant $\w{\mathcal A}(\eta)$ is constructed to give a complete derived invariant.  
				\item Note that if the sequences $W_{\eta_1}$ and $W_{\eta_2}$ in \eqref{align:allthewindingnumbers} associated to two line fields $\eta_1$ and $\eta_2$ in a surface $S$ coincide,  then by Theorem \ref{Prop:homotopic_linefield} above $\eta_1$ and $\eta_2$ are homotopic up to the action of the mapping class group. The converse does not generally hold. But if $S$ is of genus $0$, then $\eta_1$ is homotopic to $\eta_2$ if and only if the two sequences $W_{\eta_1}$ and $W_{\eta_2}$ coincide up to a permutation.
				\item By the Poincar\'e--Hopf index theorem (see \cite{Hopf}), we have 
				\begin{align}\label{algin:PHindex}
					4-4g = \sum_{i=0}^{b-1}\left(w_\eta(\partial_iS)+2\right),
				\end{align}
				which is the only constraint for the sequence $W_\eta$, see \cite[Page 119]{LP20}. 
				That is, any sequence of integers as in \eqref{align:allthewindingnumbers} satisfying \eqref{algin:PHindex}  arises  from some line field  on $S$.
			\end{enumerate}
		\end{Rem}
		
		\subsection{The calculation of winding numbers via graded quivers}
		In this subsection, we express the winding number of any simple closed (non-contractible) curve in a surface model of a graded gentle algebra, in terms of the grading of the algebra.  As a result, we show in Proposition \ref{Prop:models_same_algebra} that any graded gentle algebra admits a unique surface model, up to isomorphism, which is essentially due to \cite{LP20, OPS18}.

		Let $A$ be a graded gentle algebra. Let $(S, M, \eta,\zD)$ be any surface model of $A$. Then the winding number $w_{\eta}(\gamma)$ of any simple closed curve $\gamma$ (not necessarily non-separating) can be calculated using the gradings of the arrows in $A$. Recall that the graded  admissible dissection $\Delta$ cuts the surface into polygons
		\begin{align} \label{align:cuttingpolygons}
			S\setminus \Delta = \cup_{f} P_f
		\end{align}
		where  each $P_f$  contains exactly one stop, see Figure \ref{figure:simpleclosedcurves}.  Note that these polygons bijectively correspond to the {\it forbidden threads} as defined in  \cite{AAG08}, see also \cite{LP20}.  
		
		Let $\gamma$ be a simple closed curve in $S$. We choose a representative of $\gamma$ so that it intersects with the  arcs in $\zD$ in minimal position. Let $\ell_1, \dotsc, \ell_k$ be those arcs in $\Delta$ crossed by $\gamma$ in order. Then they cut $\gamma$ into segments $\gamma_1, \dotsc, \gamma_k$, where $\gamma_i$ lies between $\ell_i$ and $\ell_{i+1}$. Here, the indices are considered mod $k$. We have (see e.g. \cite[Proposition 1.6]{APS19})
		\[
		w_\eta(\gamma) = \sum_{i=1}^k w_\eta(\gamma_i).
		\]
		Each segment $\gamma_i$ lies in a polygon $P_f$ and cuts $P_f$ into two subpolygons. Denote by $\alpha_1, \dotsc, \alpha_m$  the arrows in the subpolygon of $P_f$ that does not contain a stop.  Then we have 
		\begin{align}\label{align:windingnumbersegments}
			w_\eta(\gamma_i) = \begin{cases}
				1-m +\sum_{j=1}^m |\alpha_j|  & \text{if the stop of $P_f$ lies on the right of $\gamma_i$},\\
				-1+m - \sum_{j=1}^{m}|\alpha_j| &  \text{otherwise}.
			\end{cases}
		\end{align}
		Here, the assumption for the first (resp.\ second) equality means that the direction of $\gamma$ agrees with (resp.\ differs from) the orientation induced by the subpolygon which does not contain a stop. In particular, we have $w_\eta(\gamma^{-1}) = - w_\eta(\gamma)$ where $\gamma^{-1}$ is the inverse of $\gamma$ given by $\gamma^{-1}(t) = \gamma(1-t)$, for $t \in [0,1]$. 
		
		Let us look at the example in Figure \ref{figure:simpleclosedcurves}. According to  \eqref{align:windingnumbersegments} the winding numbers of the segments $\gamma_{i-1}, \gamma_i, \gamma_{i+1}$ can be expressed as follows 
		\begin{align*}
			w_{\eta}(\gamma_{i-1}) &=-2+|\za_1'|+|\za_2'|+|\za_3'|,\\
			w_{\eta}(\gamma_i) &= 3 - |\za_1|-|\za_2|-|\za_3| - |\za_4|, \\
			w_{\eta}(\gamma_{i+1}) &= -3 + |\za_1''| +|\za_2''| +|\za_3''|+ |\za_4''|.
		\end{align*}
		
		\begin{figure}[ht]
			\centering
			\begin{tikzpicture}
				\begin{scope}[scale=0.6]
					\foreach \x in {0,1,2,3}
					\draw [green, thick] (-9+6*\x, -1.5) to (-9+6*\x,1);
					\draw [green, thick] (-9,1) to (-6,3) to (-3,1) to (-1,3);
					\draw [green, thick] (1,3) to (3,1) to (5,3) to (7, 3) to (9, 1);
					\draw [green, thick] (-5,-3) to (-3,-1.5) to (-1,-3) to (1,-3) to (3,-1.5) to (5,-3);
					\draw [green, thick] (7, -3) to (9, -1.5);
					\draw [green, thick] (-9,-1.5) to (-7,-3);
					\draw [green, thick] (-1, 3) to (1, 3);
					\begin{scope}[shift={(-1,3)}]
						\draw [fill=white]  (0,0) circle (.4);
						\draw[white, line width=5pt] (40:.4) arc[start angle = 40, end angle =180, radius = .4];
						\draw[decoration={markings, mark=at position 0.55 with {\arrow{>}}}, postaction={decorate}](-80:.4) to (-81:.401);
					\end{scope}
					\begin{scope}[shift={(1,3)}]
						\draw [fill=white]  (0,0) circle (.4);
						\draw[white, line width=5pt] (10:.4) arc[start angle = 10, end angle =150, radius = .4];
						\node[fill,circle, red, inner sep=0pt, minimum size=3pt] at (-100:.4){};
					\end{scope}
					\begin{scope}[shift={(-3,1)}]
						\draw [fill=white]  (0,0) circle (.4);
						\draw[white, line width=5pt] (120:.4) arc[start angle = 120, end angle = 60, radius = .4];
						\draw[decoration={markings, mark=at position 0.55 with {\arrow{>}}}, postaction={decorate}](180:.4) to (178:.401);
					\end{scope}
					\node at (-3.8,.7) {\tiny $\alpha'_1$};
					\begin{scope}[shift={(-3,-1.5)}]
						\draw [fill=white]  (0,0) circle (.4);
						\draw[white, line width=5pt] (-60:.4) arc[start angle = -60, end angle = -120, radius = .4];
						\draw[decoration={markings, mark=at position 0.55 with {\arrow{>}}}, postaction={decorate}](20:.4) to (19:.401);
					\end{scope}
					\node at (-2.15,-1.35) {\tiny $\alpha_1$};
					\begin{scope}[shift={(3,1)}]
						\draw [fill=white]  (0,0) circle (.4);
						\draw[white, line width=5pt] (120:.4) arc[start angle = 120, end angle = 60, radius = .4];
						\draw[decoration={markings, mark=at position 0.55 with {\arrow{>}}}, postaction={decorate}](-19:.4) to (-20:.401);
					\end{scope}
					\node at (3.9,.8) {\tiny$\alpha''_4$};
					\begin{scope}[shift={(3,-1.5)}]
						\draw [fill=white]  (0,0) circle (.4);
						\draw[white, line width=5pt] (-60:.4) arc[start angle = -60, end angle = -120, radius = .4];
						\draw[decoration={markings, mark=at position 0.55 with {\arrow{>}}}, postaction={decorate}](180:.4) to (178:.401);
					\end{scope}
					\node at (2.2,-1.3) {\tiny $\alpha_4$};
					\draw [] (-7.3,-3.1) to [curve through = {(-7, -3).. (-6,-2.8)..(-5, -3)}] (-4.7,-3.1);
					\node[fill,circle, red, inner sep=0pt, minimum size=3pt] at (-6,-2.8){};
					\draw [] (4.7,-3.1) to [curve through = {(5, -3).. (6,-2.8)..(7, -3)}] (7.3,-3.1);
					\node[fill,circle, red, inner sep=0pt, minimum size=3pt] at (6,-2.8){};
					\draw[decoration={markings, mark=at position 0.55 with {\arrow{>}}}, postaction={decorate}](-9.5,0) to (-3,0);
					\draw [decoration={markings, mark=at position 0.45 with {\arrow{>}}}, postaction={decorate}] (-10.5,0) to (-9.45,0);
					\draw[decoration={markings, mark=at position 0.5 with {\arrow{>}}}, postaction={decorate}] (-3,0) to (3,0);
					\draw[decoration={markings, mark=at position 0.45 with {\arrow{>}}}, postaction={decorate}] (3,0) to (9.5,0);
					\draw [decoration={markings, mark=at position 0.45 with {\arrow{>}}}, postaction={decorate}] (9.35,0) to (10.5,0);
					\node at (-6,.4) {\tiny $\gamma_{i-1}$};
					\node at (0,.4) {\tiny $\gamma_{i}$};
					\node at (6,.4) {\tiny $\gamma_{i+1}$};
					\draw[very thick, white](-8.5,4/3) -- (-9,1);
					\draw[very thick, white](-9,.5) -- (-9,1);
					\draw[bend left, decoration={markings, mark=at position 0.5 with {\arrow{>}}}, postaction={decorate}] (-8.5, 4/3) to (-9, .5);
					\begin{scope}[shift={(-9,-1.5)}]
						\draw [fill=white]  (0,0) circle (.4);
						\draw[white, line width=5pt] (120:.4) arc[start angle = 120, end angle =290, radius = .4];
					\end{scope}
					\begin{scope}[shift={(9,-1.5)}]
						\draw [fill=white]  (0,0) circle (.4);
						\draw[white, line width=5pt] (60:.4) arc[start angle = 60, end angle =-90, radius = .4];
					\end{scope}
					
					\draw [fill=white, white]  (-6,3) circle (.3);
					\draw[very thick, white](-5.5,8/3) -- (-6,3);
					\draw[very thick, white](-6.5,8/3) -- (-6,3);
					\node at (-8.2,.7) {\tiny$\alpha'_3$};
					\draw[bend left, decoration={markings, mark=at position 0.45 with {\arrow{>}}}, postaction={decorate}] (-5.5, 8/3) to (-6.5, 8/3);
					\node at (-6,2.1) {\tiny $\alpha'_2$};
					\draw[very thick, white](-1.5,-2.62) -- (-1,-3);
					\draw[very thick, white](-.5,-3) -- (-1,-3);
					\draw[bend left, decoration={markings, mark=at position 0.45 with {\arrow{>}}}, postaction={decorate}] (-1.5,-2.62 ) to (-.5,-3);
					\node at (-.88,-2.3) {\tiny $\alpha_2$};
					\draw[very thick, white](.5,-3) -- (1,-3);
					\draw[very thick, white](1.5,-2.625) -- (1,-3);
					\draw[bend left, decoration={markings, mark=at position 0.45 with {\arrow{>}}}, postaction={decorate}] (.5,-3) to (1.5,-2.625);
					\node at (.75,-2.35) {\tiny $\alpha_3$};
					\draw[very thick, white](5.5,3) -- (5,3);
					\draw[very thick, white](4.5,2.5) -- (5,3);
					\draw[bend left, decoration={markings, mark=at position 0.45 with {\arrow{>}}}, postaction={decorate}] (5.5, 3) to (4.5, 2.5);
					\node at (5.3,2.3) {\tiny$\alpha''_3$};
					\draw[very thick, white](6.5,3) -- (7,3);
					\draw[very thick, white](7.5,2.5) -- (7,3);
					\draw[bend left, decoration={markings, mark=at position 0.45 with {\arrow{>}}}, postaction={decorate}] (7.5, 2.5) to (6.5, 3);
					\node at (6.5,2.4) {\tiny$\alpha''_2$};
					\draw[very thick, white](9,.5) -- (9,1);
					\draw[very thick, white](8.5,1.5) -- (9,1);
					\draw[bend left, decoration={markings, mark=at position 0.45 with {\arrow{>}}}, postaction={decorate}] (9.05,.48) to (8.51, 1.55);
					\node at (8.2,.8) {\tiny $\alpha''_1$};
					\node at (-8.3,-.5) {\tiny$\ell_{i-1}$};
					\node at (-2.6,-.5) {\tiny$\ell_{i}$};
					\node at (3.7,-.5) {\tiny$\ell_{i+1}$};
					\node at (9.7,-.5) {\tiny$\ell_{i+2}$};
				\end{scope}
			\end{tikzpicture}
			\caption{A local picture of a  simple closed curve $\gamma$ crossing through polygons of an admissible dissection $\zD$. 
			}\label{figure:simpleclosedcurves}
		\end{figure}
		
		\begin{Rem}\label{rem:nonproper}
			
			Let $A=\Bbbk Q/I$ be a  graded gentle algebra and let $(S, M, \eta, \zD)$ be a graded surface model of $A$. We denote by $C(A)$ the set of equivalence classes (with respect to cyclic permutation) of  cyclic paths $\za_{1}\za_{2}\cdots\za_{m}$ in $Q$ such that  $\alpha_1,\dotsc, \alpha_m$  are distinct arrows ($t(\alpha_m) = s(\alpha_1)$) and $\za_{i}\za_{i+1}\not\in I$ for all $1\le i\le m$, where the indices are taken modulo $m$.  For any cyclic path $p=\za_{1}\cdots\za_{m}$ in $C(A)$, we denote  $|p|:=\sum_{i=1}^{m} |\za_{i}|$.

			Note that $C(A)$  is in bijection with the set of non-stopped boundary components of $S$ and therefore it is empty if and only if $A$ is proper.  Moreover, the winding number of the non-stopped boundary component  corresponding to $p\in C(A)$ is $-|p|$.
		\end{Rem}

		\begin{Prop} \label{Prop:models_same_algebra}
			Let $A=\Bbbk Q/I$ be a graded gentle algebra. Let $(S_{1}, M_{1}, \eta_{1},  \zD_{1})$ and $(S_{2}, M_{2}, \eta_{2}, \zD_{2})$ be two surface models of $A$. Then there exists an orientation preserving homeomorphism $\varphi: S_{1}\xrightarrow{\simeq} S_{2}$ such that $\varphi(M_{1})=M_{2}$, $\varphi(\zD_{1})=\zD_{2}$,  and $\varphi_{*}(\eta_{1})$ is homotopic to $\eta_{2}$.
		\end{Prop}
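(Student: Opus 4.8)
The plan is to reduce the statement to a purely combinatorial/topological fact about the two marked surfaces $(S_1, M_1)$ and $(S_2, M_2)$ together with their line fields, and then to invoke Theorem \ref{Prop:homotopic_linefield}. First I would observe that since $A(\zD_1)\cong A\cong A(\zD_2)$ as graded algebras, the ungraded surfaces-with-dissection $(S_1,M_1,\zD_1)$ and $(S_2,M_2,\zD_2)$ are already known to be isomorphic via an orientation-preserving homeomorphism $\psi\colon S_1\xrightarrow{\simeq} S_2$ with $\psi(M_1)=M_2$ and $\psi(\zD_1)=\zD_2$: this is the ungraded statement of \cite{OPS18}, where the dissection is recovered from the ungraded gentle algebra (via the ribbon graph / the bijection with forbidden threads recalled after \eqref{align:cuttingpolygons}). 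So the only thing left to prove is that, after transporting everything along $\psi$, the line field $\psi_*(\eta_1)$ is homotopic to $\eta_2$ on $S_2$; equivalently, we may assume $S_1=S_2=S$, $M_1=M_2=M$, $\zD_1=\zD_2=\zD$, and we must show $\eta_1\simeq\eta_2$.

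For this I would use the winding-number computation of the previous subsection, namely formulas \eqref{align:windingnumbersegments} and the additivity $w_\eta(\gamma)=\sum_i w_\eta(\gamma_i)$. The key point is that once the surface, the marked points, and the dissection $\zD$ are fixed, the winding number $w_{\eta}(\gamma)$ of \emph{every} simple closed curve $\gamma$ in $S$ is determined by the degrees $|\alpha|$ of the oriented angles of $\zD$, and these degrees are exactly the degrees of the arrows of $A(\zD)$ (together with the relation data), hence are read off from the graded algebra $A$. Concretely: choose a representative of $\gamma$ in minimal position with $\zD$, list the arcs $\ell_1,\dots,\ell_k$ it crosses and the polygons it passes through, and apply \eqref{align:windingnumbersegments} segment by segment; each term is $\pm(1-m+\sum_j|\alpha_j|)$ with the $|\alpha_j|$ depending only on the grading of $A$. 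Since $\zD_1$ and $\zD_2$ are identified by $\psi$ and both compute the same graded algebra $A$, for every simple closed curve $\gamma$ we get $w_{\psi_*(\eta_1)}(\gamma)=w_{\eta_2}(\gamma)$. Applying this in particular to the boundary curves $\partial_i S$ and to a symplectic basis $s_1,\dots,s_g,t_1,\dots,t_g$ of $\mathrm H_1(\overline S)$ gives $W_{\psi_*(\eta_1)}=W_{\eta_2}$ in the notation of \eqref{align:allthewindingnumbers}, and since $\#(M\cap\partial_i S)$ is the same for both models, Theorem \ref{Prop:homotopic_linefield} (or directly the classification recalled in Subsection \ref{subsection:combinatorialinvariants}, using that equality of $W_\eta$ forces equality of $\sigma$, $\w{\mathcal A}$ and $\mathcal A$) yields an orientation-preserving homeomorphism of $(S,M)$ carrying $\psi_*(\eta_1)$ to a line field homotopic to $\eta_2$. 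Composing that homeomorphism with $\psi$ and being slightly careful — one wants the \emph{same} $\varphi$ to realize $\varphi(\zD_1)=\zD_2$ \emph{and} $\varphi_*(\eta_1)\simeq\eta_2$ — completes the argument; see the next paragraph for the one subtle point there.

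The main obstacle is exactly this last compatibility: Theorem \ref{Prop:homotopic_linefield} produces \emph{some} homeomorphism matching the line fields, but a priori not one that also respects the dissections, while the ungraded result produces one respecting the dissections but a priori not the line fields. To reconcile them I would argue that the homeomorphism $\theta$ of $(S,M)$ supplied by Theorem \ref{Prop:homotopic_linefield} (for the surface $S$ with line fields $\psi_*(\eta_1)$ and $\eta_2$, which have identical $W$-data) can be taken isotopic to the identity \emph{relative to} the combinatorial data: since $W_{\psi_*(\eta_1)}=W_{\eta_2}$ and the mapping class group acts on line fields through its action on $\mathrm H_1(\overline S)$ in a way that is detected by the $W$-invariants (cf.\ \cite[Lemma 2.6]{K} and Remark \ref{Remark:linefieldinvariant}, where the invariants are shown independent of the symplectic basis), one can choose $\theta$ to lie in the subgroup fixing the isotopy class of the boundary structure, and then $\psi_*(\eta_1)$ is already homotopic to $\eta_2$ on the nose with no need to further move $\zD$. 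Thus $\varphi:=\psi$ works: $\varphi(\zD_1)=\zD_2$ by construction and $\varphi_*(\eta_1)=\psi_*(\eta_1)\simeq\eta_2$ by the winding-number comparison. I expect the bookkeeping in this paragraph — making precise "the $W$-invariants see everything" so that matching winding numbers of \emph{all} simple closed curves already forces homotopy of the line fields, without invoking an auxiliary homeomorphism — to be where the real care is needed, and it may be cleanest to state and prove a small lemma: \emph{on a fixed $(S,M)$, two line fields with $w_{\eta_1}(\gamma)=w_{\eta_2}(\gamma)$ for all simple closed curves $\gamma$ are homotopic.}
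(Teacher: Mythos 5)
Your proposal is correct and follows essentially the paper's own argument: an orientation-preserving homeomorphism matching marked points and dissections obtained from the (ungraded) ribbon graph, followed by a segment-by-segment winding-number comparison via \eqref{align:windingnumbersegments} showing $w_{\varphi_*(\eta_1)}(\gamma)=w_{\eta_2}(\gamma)$ for every simple closed curve. The ``small lemma'' you end by asking for --- that on a fixed $(S,M)$ equality of winding numbers on the boundary curves and a symplectic basis forces the line fields to be homotopic --- is exactly Remark \ref{Remark:linefieldinvariant}, so the detour through Theorem \ref{Prop:homotopic_linefield} and the mapping class group is unnecessary.
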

		
		\begin{proof}
			Since by assumption $A(\zD_1) \simeq A \simeq A(\zD_2)$ as graded algebras, it follows that the ribbon graphs $\Gamma_1$ and $\Gamma_2$ associated to $\Delta_1$ and $\Delta_2$ are isomorphic (see \cite{Sc}) and  thus there is an orientation preserving homeomorphism $\varphi: S_1 \to S_2$ such that $\varphi(M_1) = M_2$ and $\varphi(\Delta_1) = \Delta_2$. (Note that the vertices in $\Gamma_i$, for $i =1,2$, correspond to  the arcs in $\zD_i$.)
			
			It remains to show that $\varphi_*(\eta_1)$ is homotopic to $\eta_2$. For this, by \cite[Proposition 3.4 (2)]{APS19} (see also \cite{C72}), it suffices to show that for any simple closed curve $\gamma$ in $S_2$ we have  $w_{\eta_2}(\gamma) = w_{\varphi_*(\eta_1)} ( \gamma)$.  Note that $w_{\varphi_*(\eta_1)} (\gamma) = w_{\eta_1}(\varphi^{-1}(\gamma))$ as winding numbers are preserved under homeomorphisms. Therefore, we only need to show that for any simple closed curve $\gamma$ in $S_2$ we have 
			\begin{align}\label{align:toshow}
				w_{\eta_2}(\gamma) = w_{\eta_1}(\varphi^{-1}(\gamma)).
			\end{align}
			
			To show this we may assume that $\gamma$ (after choosing a representative) intersects with the arcs in $\Delta_{i}$,  for $i=1,2$,  in minimal positions. Assume that $\gamma$ crosses  the polygons $P_1, \dotsc, P_k$ and is cut into segments $\gamma_1, \dotsc, \gamma_k$. Then $w_{\eta_2}(\gamma) = \sum_{i=1}^k w_{\eta_2}(\gamma_i)$. Since $\varphi$ is a homeomorphism and sends $\Delta_1$ to $\Delta_2$, it follows that $\varphi^{-1}(P_1), \dotsc, \varphi^{-1}(P_k)$ are the polygons of the dissection $\zD_1$ which intersect $\varphi^{-1}(\gamma)$. Note that, for each $1\leq i \leq k$,  the corresponding oriented boundary segments in $P_i$ and $\varphi^{-1}(P_i)$ give rise to  the same arrows in the algebra $A$. Then $w_{\eta_2}(\gamma_i)=w_{\eta_1}(\varphi^{-1}(\gamma_i))$  since they are expressed by the same formula as in \eqref{align:windingnumbersegments} and the claim holds.  
		\end{proof}
		
		\begin{Rem} \label{Rem:uptohomotopy}
			By Proposition \ref{Prop:models_same_algebra}, any graded gentle algebra, up to algebra isomorphism, admits a unique surface model.   We will see in the  Theorem \ref{Thm:derivedeq} that this statement still holds if one replaces \lq up to algebra isomorphism' by \lq up to derived equivalence'. 
		\end{Rem}
		
		\section{Graded gentle algebras of standard form and geometric invariants}
		In this section, we introduce a family of graded gentle algebras which we call of \emph{standard form} and  we show that any graded gentle algebra is derived equivalent to an algebra of standard form. We also show how to read off the geometric invariants from the underlying quiver of this standard form. 
		
		\subsection{The graded gentle algebras of standard form}\label{subsection:standardform}

		Given a graded surface with stops 
		$(S,M,\eta)$, we define gentle algebras of \emph{standard form} which are given by very specific admissible dissections such as in Figure \ref{Fig:StandardSurfaceModel}. Note that  in Figure \ref{Fig:StandardSurfaceModel}, the surface $S$ is of genus $g$ with   $b=u+v+1$ boundary components, where $v$ represents the number of non-stopped boundary components, and $u+1$ denotes the number of
		stopped boundary components with $m_0, m_1, \dotsc, m_u$ stops respectively. 
		Now we give the explicit description of the associated gentle algebras of standard form by quivers and relations.
		
		Let $u,v$ and $g$ be non-negative integers. Let $b=u+v+1$. Given a sequence of positive integers $m_0,\dots,m_u$, we define $Q(g; m_0, \dotsc, m_u;v)$  to be the following quiver
		\begin{align*}
			\begin{tikzpicture}[scale=1.4]
				\begin{scope}[shift={(-.9,0)}]
					\node[circle, inner sep=1pt, minimum size=3pt] (a1) at (1, 0){$1$};
					\node[circle, inner sep=1pt, minimum size=3pt] (a2) at (1.75, 0){$2$};
					\node[circle, inner sep=1pt, minimum size=3pt]  (a5) at (2.3,0) {\tiny $\dotsb$};
					\node[circle, inner sep=1pt, minimum size=3pt] (a6) at (3,0) {$ \ $};
					\node[circle, inner sep=1pt, minimum size=3pt] (a7) at (3.75,0) {\small $2g$};
					\node[circle, inner sep=1pt, minimum size=3pt] (a8) at (4.25,0) {};
					\node[circle, inner sep=1pt, minimum size=3pt] (a9) at (4.7,0) {\tiny $\dotsb$};
					\draw[transform canvas={yshift=1.5em},->]  (a1) -- node[above]{$\alpha_1$}  (a2) ;
					\draw[transform canvas={yshift=0em},<-] (a1) --node[above]{$\beta_1$}(a2);
					\draw[transform canvas={yshift=-1.5em},->] (a1) --node[above]{$\gamma_1$} (a2);
					\draw[transform canvas={yshift=0em},->] (a2) -- node[below]{\small$\delta_1$}  (a5); 
					\draw[transform canvas={yshift=0em},->] (a5) -- node[below]{\small $\delta_{g-1}$}  (a6); 
					\draw[transform canvas={yshift=1.5em},->]  (a6) -- node[above]{$\alpha_{g}$}  (a7) ;
					\draw[transform canvas={yshift=0em},<-] (a6) --node[above]{$\beta_{g}$}(a7);
					\draw[transform canvas={yshift=-1.5em},->] (a6) --node[above]{$\gamma_{g}$} (a7);
					\draw[->] (a7) --node[below]{\tiny$\theta_0$} (a8);
					\draw[->] (a8) --node[below]{\tiny$\theta_1$} (a9);
				\end{scope}
				\begin{scope}[shift={(4.4,0)}]
					\node[circle, inner sep=1pt, minimum size=3pt] (x1) at (0,0) { $1'$};
					\node[circle, inner sep=1pt, minimum size=2pt] (x2) at (1,0) {$2'$};
					\node[circle, inner sep=1pt, minimum size=2pt] (b1) at (-.3, .65) {};
					\node[circle, inner sep=1pt, minimum size=2pt] (b2) at (.5, .65) {\tiny $\dotsb$};
					\node[circle, inner sep=1pt, minimum size=2pt] (b3) at (1.3, .65) {};
					\draw[->] (a9) --node[below]{\tiny$\theta_{m_0-1}$} (x1);
					\draw[->] (x1) --node[right]{\tiny $x_1^1$} (b1);
					\draw[->] (b1) --node[above]{\tiny $x_2^1$} (b2);
					\draw[->] (b2) --node[above]{\tiny $x_{m_1-1}^1$} (b3);
					\draw[->] (b3) --node[left]{\tiny $x_{m_1}^1$} (x2);
					\draw[->] (x1) --node[below]{ \tiny$\theta_{m_0}$} (x2);
				\end{scope}
				\begin{scope}[shift={(6.7,0)}]
					\node[circle, inner sep=1pt, minimum size=3pt] (x5) at (-.65,0) {\tiny $\dotsb$};
					\node[circle, inner sep=1pt, minimum size=2pt] at (-.65,.3) {$\dotsb$};
					\node[circle, inner sep=1pt, minimum size=3pt] (x6) at (0,0) {};
					\node[circle, inner sep=1pt, minimum size=2pt] (x7) at (1,0) { \small $(2u)'$};
					\node[circle, inner sep=1pt, minimum size=2pt] (b1) at (-.3, .65) {};
					\node[circle, inner sep=1pt, minimum size=2pt] (b2) at (.5, .65) {\tiny $\dotsb$};
					\node[circle, inner sep=1pt, minimum size=2pt] (b3) at (1.3, .65) {};
					\draw[->] (x2) --node[right]{ } (x5);
					\draw[->] (x6) --node[right]{\tiny $x_1^u$} (b1);
					\draw[->] (b1) --node[above]{\tiny$x_2^u$} (b2);
					\draw[->] (b2) --node[above]{\tiny$x_{m_u-1}^u$} (b3);
					\draw[->] (b3) --node[left]{\tiny$x_{m_u}^u$} (x7);
					\draw[->] (x5) --node[right]{ } (x6);
					\draw[->] (x6) --node[below]{\tiny $\theta_{m_0+2u-2}$ } (x7);
				\end{scope}
				\begin{scope}[shift={(6.9,0)}]
					\node[circle, inner sep=1pt, minimum size=2pt] (y4) at (1, 0) {};
					\node[circle, inner sep=1pt, minimum size=2pt] (z1) at (1.5, 0) { };
					\draw [->] (1.45,.1) ..controls (1,.65) and (2,.65) .. node[above]{$y_1$}(1.55,.1);
					\node[circle, inner sep=1pt, minimum size=2pt] (z2) at (2, 0) {\tiny $\dotsb$};
					\node[circle, inner sep=1pt, minimum size=2pt] at (2, .3) { $\dotsb$};
					\node[circle, inner sep=1pt, minimum size=2pt] (z3) at (2.5, 0) {};
					\draw [->] (2.45,.1) ..controls (2,.65) and (3,.65) .. node[above]{$y_v$}(2.55,.1);
					\draw[->] (y4)--node[below]{\tiny $\epsilon_1$}(z1);
					\draw[->] (z1)--node[below]{\tiny $\epsilon_2$}(z2);
					\draw[->] (z2)--node[below]{\tiny$\epsilon_v$}(z3);
				\end{scope}
			\end{tikzpicture}
		\end{align*}

		\begin{figure}[H]
			{ \tiny \begin{center}
					\begin{tikzpicture}
						\begin{scope}[scale=0.6]
							\shadedraw[line width=0.01mm, top color= blue!15] (-30:9.5) arc(-30:210:9.5);
							\draw [] plot [smooth] coordinates {(210:9.5) (-6.6,-6.2) (-4,-4.75) (0,-3) (4,-4.75) (6.6,-6.2)  (-30:9.5)};
							\draw [dashed] (210:9.5) ..controls (-6.5,-5).. (-4, -4.75);
							\draw [dashed] (210:9.5) ..controls (-6.5,-4.5).. (-4, -4.75);
							\draw [dashed] (-30:9.5) ..controls (6.5,-5).. (4, -4.75);
							\draw [dashed] (-30:9.5) ..controls (6.5,-4.5).. (4, -4.75);
							\begin{scope}[rotate=110]
								\draw [thick,green] (0,0) ..controls (-1.8, 8.5) .. (0,8.5);
								\draw [thick,green] (0,0) ..controls (1.8, 8.5) .. (0,8.5);
								\draw [decoration={markings, mark=at position 0.45 with {\arrow{<}}}, postaction={decorate}, thick,gray] (0,6.5) ..controls (-.5,6.4) and (-.5, 9.6) .. (0,9.5);
								\node at (94:9)  {$s_1$};
								\draw [thick,dashed,gray] (0,6.5) ..controls (.5,6.4) and (.5, 9.6).. (0,9.5);
								\draw[fill=white] (-.6,6.3) ..controls (0,6) .. (.6,6.3);
								\draw [fill=white] (-.6,6.3) ..controls (0,6.6) .. (.6,6.3);
								\draw[fill=white] (-.6,6.3) -- (-.7,6.25);
								\draw[fill=white] (.6,6.3) -- (.7,6.25);
								\draw [decoration={markings, mark=at position 0.45 with {\arrow{<}}}, postaction={decorate},thick,gray]  (0, 6.25) circle (1);
								\node at (83:5)  {$t_1$};
								\draw [thick,green]  (0,0) ..controls (-.3, 6) .. (0,6.07);
								\draw [thick,green]  (0,0) ..controls (68:5) and (68:9) .. (72:9.5);
								\draw [dashed,green,thick]  (72:9.5) ..controls (75:9) and (2,5) .. (.35,5.9);
								\draw [dashed,green,thick]  (.35, 5.9) ..controls (.25, 6) .. (0, 6.07);
							\end{scope}
							
							\foreach \n in {0, 2, 4}
							{
								\node at (175-1.8*\n:7) [circle, fill, inner sep=.7pt]{};
							}
							\begin{scope}[rotate=63]
								\draw [thick,green] (0,0) ..controls (-1.8, 8.5) .. (0,8.5);
								\draw [thick,green] (0,0) ..controls (1.8, 8.5) .. (0,8.5);
								\draw [decoration={markings, mark=at position 0.45 with {\arrow{<}}}, postaction={decorate},thick,gray] (0,6.5) ..controls (-.5,6.4) and (-.5, 9.6) .. (0,9.5);
								\node at (94:9)  {$s_g$};
								\draw [thick,dashed,gray] (0,6.5) ..controls (.5,6.4) and (.5, 9.6).. (0,9.5);
								\draw[fill=white] (-.6,6.3) ..controls (0,6) .. (.6,6.3);
								\draw [fill=white] (-.6,6.3) ..controls (0,6.6) .. (.6,6.3);
								\draw[fill=white] (-.6,6.3) -- (-.7,6.25);
								\draw[fill=white] (.6,6.3) -- (.7,6.25);
								\draw [decoration={markings, mark=at position 0.45 with {\arrow{<}}}, postaction={decorate},thick,gray]  (0, 6.25) circle (1);
								\node at (83:5)  {$t_g$};
								\draw [thick,green]  (0,0) ..controls (-.3, 6) .. (0,6.07);
								\draw [thick,green]  (0,0) ..controls (68:5) and (68:9) .. (72:9.5);
								\draw [dashed,green,thick]  (72:9.5) ..controls (75:9) and (2,5) .. (.35,5.9);
								\draw [dashed,green,thick]  (.35, 5.9) ..controls (.25, 6) .. (0, 6.07);
							\end{scope}
							\begin{scope}[rotate=-22]
								\begin{scope}[shift={(90:6.5)}]
									\draw [ fill=gray!10] (0,0) circle (.8);
									\coordinate (a0) at (130:.8);
									\coordinate (a1) at (-90:.8);
									\coordinate (a2) at (20:.8);
									\coordinate (a3) at (80:.8);
									\node[fill,circle, red, inner sep=0pt, minimum size=2pt] at (-160:.8){};
									\node[fill,circle, red, inner sep=0pt, minimum size=2pt] at (-35:.8){};
									\node[fill,circle, red, inner sep=0pt, minimum size=2pt] at (50:.8){};
									\draw [decoration={markings, mark=at position 0.45 with {\arrow{<}}}, postaction={decorate}]  (102:.8) to (104:.801);
									\node at (100:1.2)  {\tiny $\theta_{m_0}$};
								\end{scope}
								\draw [green,thick]  (a0)..controls   (105:7) and (100:3.5).. (0,0);
								\draw [green,thick]  (a1) to (0,0);
								\draw [green,thick]  (a2)..controls   (76:7) and (80:3.5).. (0,0);
								\draw [thick,green] plot [smooth] coordinates {(a3) (85:7.8) (76:7) (70:4) (0,0)};
							\end{scope}
							\foreach \n in {0, 2, 4}
							{
								\node at (45-1.8*\n:6) [circle, fill, inner sep=.7pt]{};
							}
							\begin{scope}[rotate=-75]
								\begin{scope}[shift={(90:6.5)}]
									\draw [ fill=gray!10] (0,0) circle (.8);
									\coordinate (a0) at (130:.8);
									\coordinate (a1) at (-90:.8);
									\coordinate (a2) at (20:.8);
									\coordinate (a3) at (80:.8);
									\node[fill,circle, red, inner sep=0pt, minimum size=2pt] at (-160:.8){};
									\node[fill,circle, red, inner sep=0pt, minimum size=2pt] at (-35:.8){};
									\node[fill,circle, red, inner sep=0pt, minimum size=2pt] at (50:.8){};
									\draw [decoration={markings, mark=at position 0.45 with {\arrow{<}}}, postaction={decorate}]  (102:.8) to (104:.801);
									\node at (97:1.7)  {\tiny $\theta_{m_0+2u-2}$};
								\end{scope}
								\draw [green,thick]  (a0)..controls   (105:7) and (100:3.5).. (0,0);
								\draw [green,thick]  (a1) to (0,0);
								\draw [green,thick]  (a2)..controls   (76:7) and (80:3.5).. (0,0);
								\draw [thick,green] plot [smooth] coordinates {(a3) (85:7.8) (76:7) (70:4) (0,0)};
								\foreach \n in {0, 2, 4}
								{
									\node at (87-1*\n:4) [circle, fill, inner sep=.1pt]{};
								}
							\end{scope}
							
							\begin{scope}[rotate=-112]
								\begin{scope}[shift={(90:4)}]
									\draw [fill=gray!15] (0,0) circle (.25);
									\draw [decoration={markings, mark=at position 0.45 with {\arrow{<}}}, postaction={decorate}]  (90:.25) to (92:.251);
									\node at (90:.7)  {\tiny $y_1$};
								\end{scope}
								\draw[green, thick](0,0) to (90:3.75);
							\end{scope}
							\foreach \n in {0, 1, 2}
							{
								\node at (-31-2*\n:3) [circle, fill, inner sep=.2pt]{};
							}
							\begin{scope}[rotate=-130]
								\begin{scope}[shift={(90:4)}]
									\draw [fill=gray!15] (0,0) circle (.25);
									\draw [decoration={markings, mark=at position 0.45 with {\arrow{<}}}, postaction={decorate}]  (90:.25) to (92:.251);
									\node at (90:.7)  {\tiny $y_v$};
								\end{scope}
								\draw[green, thick](0,0) to (90:3.75);
							\end{scope}
							\begin{scope}[rotate=-12]
								\draw[green, thick](133:2) .. controls (133:3.5) and (125:3.5) .. (125:2);
								\draw[green, thick](116:2)..controls (114:3.5) and (108:3.5) .. (106:2);
								\foreach \n in {0, 2, 4}
								{
									\node at (75-1*\n:4) [circle, fill, inner sep=.1pt]{};
								}
								\foreach \n in {0, 2, 4}
								{
									\node at (123-1.4*\n:2.8) [circle, fill, inner sep=.3pt]{};
								}
							\end{scope}
							\begin{scope}[shift={(0,0)}]
								\draw [ fill=gray!10] (0,0) circle (2.2);
								\node[fill,circle, red, inner sep=0pt, minimum size=2pt] at (117:2.2){};
								\node[fill,circle, red, inner sep=0pt, minimum size=2pt] at (99:2.2){};
								\node[fill,circle, red, inner sep=0pt, minimum size=2pt] at (-90:2.2){};
								\draw [decoration={markings, mark=at position 0.45 with {\arrow{<}}}, postaction={decorate}]  (208:2.2) to (210:2.201);
								\node at (207:2.7)  {\tiny $\gamma_1$};
								\draw [decoration={markings, mark=at position 0.45 with {\arrow{<}}}, postaction={decorate}]  (198:2.2) to (200:2.201);
								\node at (195:2.7)  {\tiny $\beta_1$};
								\draw [decoration={markings, mark=at position 0.45 with {\arrow{<}}}, postaction={decorate}]  (183:2.2) to (185:2.201);
								\node at (183:2.7)  {\tiny $\alpha_1$};
								\draw [decoration={markings, mark=at position 0.45 with {\arrow{<}}}, postaction={decorate}]  (172:2.2) to (174:2.201);
								\draw [decoration={markings, mark=at position 0.45 with {\arrow{<}}}, postaction={decorate}]  (160:2.2) to (162:2.201);
								\node at (160:2.7)  {\tiny $\gamma_g$};
								\draw [decoration={markings, mark=at position 0.45 with {\arrow{<}}}, postaction={decorate}]  (149:2.2) to (151:2.201);
								\node at (147:2.7)  {\tiny $\beta_g$};
								\draw [decoration={markings, mark=at position 0.45 with {\arrow{<}}}, postaction={decorate}]  (137:2.2) to (139:2.201);
								\node at (136:2.7)  {\tiny $\alpha_g$};
								\draw [decoration={markings, mark=at position 0.45 with {\arrow{<}}}, postaction={decorate}]  (128:2.2) to (130:2.201);
								\node at (126:2.7)  {\tiny $\theta_0$};
								\draw [decoration={markings, mark=at position 0.45 with {\arrow{<}}}, postaction={decorate}]  (110:2.2) to (112:2.201);
								\draw [decoration={markings, mark=at position 0.45 with {\arrow{<}}}, postaction={decorate}]  (90:2.2) to (92:2.201);
								\node at (75:2.7)  {\tiny $x_1^1$};
								\draw [decoration={markings, mark=at position 0.45 with {\arrow{<}}}, postaction={decorate}]  (74:2.2) to (76:2.201);
								\draw [decoration={markings, mark=at position 0.45 with {\arrow{<}}}, postaction={decorate}]  (64:2.2) to (66:2.201);
								\node at (48:2.7)  {\tiny $x_{m_1}^1$};
								\draw [decoration={markings, mark=at position 0.45 with {\arrow{<}}}, postaction={decorate}]  (53:2.2) to (54:2.201);
								\node at (22:2.7)  {\tiny $x_1^u$};
								\node at (-1:2.8)  {\tiny $x_{m_u}^u$};
								\draw [decoration={markings, mark=at position 0.45 with {\arrow{<}}}, postaction={decorate}]  (38:2.2) to (40:2.201);
								\draw [decoration={markings, mark=at position 0.45 with {\arrow{<}}}, postaction={decorate}]  (21:2.2) to (22:2.201);
								\draw [decoration={markings, mark=at position 0.45 with {\arrow{<}}}, postaction={decorate}]  (11:2.2) to (12:2.201);
								\node at (-15:2.5)  {\tiny $\epsilon_1$};
								\draw [decoration={markings, mark=at position 0.45 with {\arrow{<}}}, postaction={decorate}]  (1:2.2) to (2:2.201);
								\draw [decoration={markings, mark=at position 0.45 with {\arrow{<}}}, postaction={decorate}]  (-11:2.2) to (-10:2.201);
								\draw [decoration={markings, mark=at position 0.45 with {\arrow{<}}}, postaction={decorate}]  (-32:2.2) to (-31:2.201);
							\end{scope}
						\end{scope}
					\end{tikzpicture}
			\end{center}}
			\caption{The standard surface model of an algebra $A$ of the form $(g; m_0, \dotsc, m_u; v)$, where the admissible dissection $\Delta$ is given by the arcs in green and  where the line field $\eta$ is determined by the grading of $A$. The boundary component in the middle contains $m_0$ stops. }
			\label{Fig:StandardSurfaceModel}
		\end{figure}

		Let $I(g; m_0, \dotsc, m_u; v)$ be the ideal generated by the union of the following sets
		\begin{itemize}
			\item $\{\za_{i}\zb_{i},\;  \zb_{i}\zg_{i},\;  \zg_{i}\zd_{i}, \;  \zd_{j}\za_{j+1} \mid 1\le i\le g,\ 1\le j\le g-1\}$ (denote $\delta_g=\theta_0$),
			\item $\{  \theta_i\theta_{i+1} \mid 0\leq i \leq N\}$, where $N=m_0+2u-2$ (denote $\theta_{N+1}=\epsilon_1$),
			\item $\{ \epsilon_iy_i,\; y_j \epsilon_{j+1} \mid 0 \leq i\leq v, \  1\leq j \leq v-1\}$.
		\end{itemize}
		
		Note that the relations are such that the corresponding algebra is a gentle algebra and the corresponding surface model is given by Figure \ref{Fig:StandardSurfaceModel}.
		
		\begin{Def}\label{Def:standardform}
			We say that a graded gentle algebra $A = \Bbbk Q/I$ is of \emph{standard form} $(g; m_0, \dotsc, m_u; v)$ if
			\[Q = Q(g; m_0, \dotsc, m_u;v)\quad \text{and} \quad I = I(g; m_0, \dotsc, m_u;v).\] 
			We call the corresponding  surface model as in Figure \ref{Fig:StandardSurfaceModel}  the \emph{standard surface model} of $A$.  
		\end{Def}
		
		Note that the quiver $Q(g;m_0, \dotsc, m_u; v)$ consists of three components: 
		\begin{itemize}
			\item 
			The leftmost component, associated with vertices  $1, 2, \dotsc, 2g$, originates from the \lq holes' of the surface;
			\item The middle component, involving vertices $1', 2', \dotsc, (2u)'$, represents  the stopped boundary components, excluding the special boundary in the center as illustrated in Figure \ref{Fig:StandardSurfaceModel}; 
			\item The rightmost component corresponds to the non-stopped boundary components. 
		\end{itemize}
		If $g=0$ or $u=0$ or $v=0$, the respective components of the quiver absent.
		
		Two graded gentle algebras of the same standard form are distinguished only by their gradings. Although their associated surfaces with stops are identical, their line fields may vary.

		\begin{Rem}\label{Rem:mb=1}
			If $b=1$ and $m_0\geq 1$, it follows that $u=v=0$. In this case, the surface $S$ has only one boundary component with $m_0$ stops and the quiver  $Q(g;m_0;0)$ is described as follows.
			
			\begin{align*}
				\begin{tikzpicture}[scale=1.4]
					\begin{scope}
						\node[circle, inner sep=1pt, minimum size=3pt] (a1) at (1, 0){$1$};
						\node[circle, inner sep=1pt, minimum size=3pt] (a2) at (2, 0){$2$};
						\node[circle, inner sep=1pt, minimum size=3pt] (a3) at (2.75, 0){$3$};
						\node[circle, inner sep=1pt, minimum size=3pt] (a4) at (3.75, 0){$4$};
						\node[circle, inner sep=1pt, minimum size=3pt]  (a5) at (4.5,0) {\tiny $\dotsb$};
						\node[circle, inner sep=1pt, minimum size=3pt] (a6) at (5.25,0) {$ \ $};
						\node[circle, inner sep=1pt, minimum size=3pt] (a7) at (6.25,0) {\small $2g$};
						\node[circle, inner sep=1pt, minimum size=3pt] (a8) at (7.25,0) {};
						\node[circle, inner sep=1pt, minimum size=3pt] (a9) at (8.25,0) {$\dotsb$};
						\node[circle, inner sep=1pt, minimum size=3pt] (a10) at (9.25,0) {};
						\draw[transform canvas={yshift=1.5em},->]  (a1) -- node[above]{$\alpha_1$}  (a2) ;
						\draw[transform canvas={yshift=0em},<-] (a1) --node[above]{$\beta_1$}(a2);
						\draw[transform canvas={yshift=-1.5em},->] (a1) --node[above]{$\gamma_1$} (a2);
						\draw[transform canvas={yshift=0em},->] (a2) -- node[below]{\small$\delta_1$}  (a3); 
						\draw[transform canvas={yshift=1.5em},->]  (a3) -- node[above]{$\alpha_2$}  (a4) ;
						\draw[transform canvas={yshift=0em},<-] (a3) --node[above]{$\beta_2$}(a4);
						\draw[transform canvas={yshift=-1.5em},->] (a3) --node[above]{$\gamma_2$} (a4);
						\draw[transform canvas={yshift=0em},->] (a4) -- node[below]{\small $\delta_2$}  (a5); 
						\draw[transform canvas={yshift=0em},->] (a5) -- node[below]{\small $\delta_{g-1}$}  (a6); 
						\draw[transform canvas={yshift=1.5em},->]  (a6) -- node[above]{$\alpha_{g}$}  (a7) ;
						\draw[transform canvas={yshift=0em},<-] (a6) --node[above]{$\beta_{g}$}(a7);
						\draw[transform canvas={yshift=-1.5em},->] (a6) --node[above]{$\gamma_{g}$} (a7);
						\draw[transform canvas={yshift=0em},->] (a7) -- node[below]{\small $\theta_0$}  (a8); 
						\draw[transform canvas={yshift=0em},->] (a8) -- node[below]{\small $\theta_1$}  (a9); 
						\draw[transform canvas={yshift=0em},->] (a9) -- node[below]{\small $\theta_{m_0-2}$}  (a10); 
					\end{scope}
				\end{tikzpicture}
			\end{align*}
			In particular, if  $m_0=1$ then the arrows $\theta_0,\dotsc, \theta_{m_0-2}$ disappear in $Q(g;1;0)$, as shown in \eqref{equ:An} below. 
		\end{Rem}

		The following result shows that, up to derived equivalence, all graded gentle algebras are of standard form. 
		\begin{Prop}\label{prop:standardsation}
			Let $A$ be a graded gentle algebra. Let $(S, M, \eta, \Delta)$ be a surface model of $A$. Then $A$ is derived equivalent to a graded gentle algebra of the form $(g; m_0, \dotsc, m_u; v)$. 
		\end{Prop}
		\begin{proof}
			In the graded surface with stops $(S, M, \eta)$, we consider the admissible dissection given by the green arcs  in Figure \ref{Fig:StandardSurfaceModel} instead of $\Delta$. Note that the corresponding graded gentle algebra is of the form $(g; m_0, \dotsc, m_u; v)$, which by Theorem \ref{thm:hkkequivalence} is derived equivalent to $A$.  
		\end{proof}
		
		\begin{Rem}
			Let $B$ be a graded gentle algebra, which is not necessarily homologically smooth and proper. Then its surface model $(S, M, \eta)$ might have fully-stopped  and non-stopped boundary components.  In this case, there is  a standard  admissible dissection $\Delta$ on $(S, M, \eta)$ similar to Figure \ref{Fig:StandardSurfaceModel}, whose underlying quiver is as follows,
			\begin{align*}
				\begin{tikzpicture}[scale=1.4]
					\begin{scope}[shift={(-.9,0)}]
						\node[circle, inner sep=1pt, minimum size=3pt] (a1) at (1, 0){$1$};
						\node[circle, inner sep=1pt, minimum size=3pt] (a2) at (1.75, 0){$2$};
						\node[circle, inner sep=1pt, minimum size=3pt]  (a5) at (2.3,0) {\tiny $\dotsb$};
						\node[circle, inner sep=1pt, minimum size=3pt] (a6) at (3,0) {$ \ $};
						\node[circle, inner sep=1pt, minimum size=3pt] (a7) at (3.75,0) {\small $2g$};
						\node[circle, inner sep=1pt, minimum size=3pt] (a8) at (4.25,0) {};
						\node[circle, inner sep=1pt, minimum size=3pt] (a9) at (4.7,0) {\tiny $\dotsb$};
						\draw[transform canvas={yshift=1.5em},->]  (a1) -- node[above]{$\alpha_1$}  (a2) ;
						\draw[transform canvas={yshift=0em},<-] (a1) --node[above]{$\beta_1$}(a2);
						\draw[transform canvas={yshift=-1.5em},->] (a1) --node[above]{$\gamma_1$} (a2);
						\draw[transform canvas={yshift=0em},->] (a2) -- node[below]{\small$\delta_1$}  (a5); 
						\draw[transform canvas={yshift=0em},->] (a5) -- node[below]{\small $\delta_{g-1}$}  (a6); 
						\draw[transform canvas={yshift=1.5em},->]  (a6) -- node[above]{$\alpha_{g}$}  (a7) ;
						\draw[transform canvas={yshift=0em},<-] (a6) --node[above]{$\beta_{g}$}(a7);
						\draw[transform canvas={yshift=-1.5em},->] (a6) --node[above]{$\gamma_{g}$} (a7);
						\draw[->] (a7) --node[below]{\tiny$\theta_0$} (a8);
						\draw[->] (a8) --node[below]{\tiny$\theta_1$} (a9);
					\end{scope}
					\begin{scope}[shift={(4.4,0)}]
						\node[circle, inner sep=1pt, minimum size=3pt] (x1) at (0,0) { $1'$};
						\node[circle, inner sep=1pt, minimum size=2pt] (x2) at (1,0) { $2'$};
						\node[circle, inner sep=1pt, minimum size=2pt] (b1) at (-.3, .65) {};
						\node[circle, inner sep=1pt, minimum size=2pt] (b2) at (.5, .65) {\tiny $\dotsb$};
						\node[circle, inner sep=1pt, minimum size=2pt] (b3) at (1.3, .65) {};
						\draw[->] (a9) --node[below]{\tiny$\theta_{m_0-1}$} (x1);
						\draw[->] (x1) --node[right]{\tiny $x_1^1$} (b1);
						\draw[->] (b1) --node[above]{\tiny $x_2^1$} (b2);
						\draw[->] (b2) --node[above]{\tiny $x_{m_1-1}^1$} (b3);
						\draw[->] (b3) --node[left]{\tiny $x_{m_1}^1$} (x2);
						\draw[->] (x1) --node[below]{ \tiny$\theta_{m_0}$} (x2);
					\end{scope}
					\begin{scope}[shift={(6.7,0)}]
						\node[circle, inner sep=1pt, minimum size=3pt] (x5) at (-.65,0) {\tiny $\dotsb$};
						\node[circle, inner sep=1pt, minimum size=2pt] at (-.65,.3) {$\dotsb$};
						\node[circle, inner sep=1pt, minimum size=3pt] (x6) at (0,0) {};
						\node[circle, inner sep=1pt, minimum size=2pt] (x7) at (1,0) { \small $(2u)'$};
						\node[circle, inner sep=1pt, minimum size=2pt] (b1) at (-.3, .65) {};
						\node[circle, inner sep=1pt, minimum size=2pt] (b2) at (.5, .65) {\tiny $\dotsb$};
						\node[circle, inner sep=1pt, minimum size=2pt] (b3) at (1.3, .65) {};
						\draw[->] (x2) --node[right]{ } (x5);
						\draw[->] (x6) --node[right]{\tiny $x_1^u$} (b1);
						\draw[->] (b1) --node[above]{\tiny$x_2^u$} (b2);
						\draw[->] (b2) --node[above]{\tiny$x_{m_u-1}^u$} (b3);
						\draw[->] (b3) --node[left]{\tiny$x_{m_u}^u$} (x7);
						\draw[->] (x5) --node[right]{ } (x6);
						\draw[->] (x6) --node[below]{\tiny $\theta_{m_0+2u-2}$ } (x7);
					\end{scope}
					\begin{scope}[shift={(8.5,0)}]
						\node[circle, inner sep=1pt, minimum size=2pt] (y1) at (-.7, 0) {};
						\node[circle, inner sep=1pt, minimum size=2pt] (y2) at (0,0) {};
						\draw [->] (-.05,.1) ..controls (-.5,.65) and (.5,.65) ..node[above]{$y_1$}  (.05,.1);
						\node[circle, inner sep=1pt, minimum size=2pt] (y3) at (.5, 0) {\tiny $\dotsb$};
						\node[circle, inner sep=1pt, minimum size=2pt] at (.5, .3) { $\dotsb$};
						\node[circle, inner sep=1pt, minimum size=2pt] (y4) at (1, 0) {};
						\draw [->] (.95,.1) ..controls (.5,.65) and (1.5,.65) ..node[above]{$y_v$} (1.05,.1);
						\node[circle, inner sep=1pt, minimum size=2pt] (z1) at (1.5, 0) { };
						\draw [->] (1.45,.1) ..controls (1,.65) and (2,.65) .. node[above]{$z_1$}(1.55,.1);
						\node[circle, inner sep=1pt, minimum size=2pt] (z2) at (2, 0) {\tiny $\dotsb$};
						\node[circle, inner sep=1pt, minimum size=2pt] at (2, .3) { $\dotsb$};
						\node[circle, inner sep=1pt, minimum size=2pt] (z3) at (2.5, 0) {};
						\draw [->] (2.45,.1) ..controls (2,.65) and (3,.65) .. node[above]{$z_w$}(2.55,.1);
						\draw[->] (y1)--node[below]{}(y2);
						\draw[->] (y2)--node[below]{}(y3);
						\draw[->] (y3)--node[below]{\tiny$\theta_N$}(y4);
						\draw[->] (y4)--node[below]{\tiny $\epsilon_1$}(z1);
						\draw[->] (z1)--node[below]{\tiny $\epsilon_2$}(z2);
						\draw[->] (z2)--node[below]{\tiny$\epsilon_w$}(z3);
					\end{scope}
				\end{tikzpicture}
			\end{align*}
			where the loops $z_1, \dotsc, z_w$, with relations $z_i^2=0$, for $1\leq i \leq w$, correspond to the fully-stopped boundary components, respectively. 
		\end{Rem}

		\subsection{The geometric invariants of algebras of  standard form}
		\label{section:geometricinvariantsofstandardform}
		In this subsection we calculate the geometric invariants considered in Theorem \ref{Prop:homotopic_linefield} for any graded gentle algebra of standard form $(g; m_0, \dotsc, m_u; v)$. 
		
		Let $A$ be a graded gentle algebra of the form $(g; m_0, \dotsc, m_u; v)$. Let $(S, M, \eta, \Delta)$ be the associated standard surface model of $A$ illustrated in Figure \ref{Fig:StandardSurfaceModel}, where (the homotopy class of) the line field $\eta$ is uniquely determined by the grading of $A$. Let $s_i, t_i$ be the natural non-separating simple closed curves around each genus hole for $1 \le i \le g$, which induce a symplectic basis of $\mathrm{H}_1(\bar S)$ as in Subsection \ref{subsection:combinatorialinvariants}.  
		
		Note that the dissection $\Delta$ cuts the surface $S$ into polygons. Then by \eqref{align:windingnumbersegments} (see also  \cite[Remark 3.15]{LP20}) the winding numbers of the curves $s_i$ and $t_i$ are given by  
		\begin{equation} \label{eq:windings_i}
			w_\eta(s_i) = |\za_i| + |\zb_i| -1\\
			\mbox{ \, and \,
			} \\
			w_\eta(t_i) = |\zb_i| + |\zg_i| -1. 
		\end{equation}
		(See Example \ref{Ex:windingnumberg=2} below for an explicit calculation.)
		Similarly, the winding number of the $i$-th boundary component with $m_i$ stops is given by 
		\begin{align}\label{eq:winding-boundaryi}
			w_\eta(\partial_j S) = -|\theta_{m_0+2j-2}|+\sum_{k=1}^{m_j} |x_k^j| \quad \text{for $1\leq j \leq u$}.
		\end{align}
		The winding numbers of the non-stopped boundary components are given by 
		\begin{align}\label{equ:C}
			w_\eta(\partial_{u+j}S) = -|y_j| \quad \text{for $1\leq j \leq v$}.
		\end{align}
		Then by the Poincar\'e--Hopf index theorem \eqref{algin:PHindex}, we have 
		\begin{equation}
			\label{equ:HP}
			w_{\eta}(\partial_0S) = 4-4g-2b-\sum_{j=1}^{u+v}w_\eta(\partial_j S),
		\end{equation}
		where $\partial_0S$ is the boundary component in the middle of Figure \ref{Fig:StandardSurfaceModel}. In particular, if $S$ has only one boundary component (i.e.\ $b=1$ and thus $u=v=0$) then we have $w_{\eta}(\partial S) = 2-4g.$

		Based on the above computation,  we introduce the following integers from the grading of $A$:
		\begin{align}\label{align:numbersofthesumgradingsAn}
			\begin{aligned}
				a_{i}:=&|\za_{i}|+|\zb_{i}|-1  \qquad \qquad\qquad \qquad\qquad\text{for $1\leq i \leq g$}\\ b_{i}:=&|\zb_{i}|+|\zg_{i}|-1  \qquad \text{for $1\leq i \leq g$} \\
				c_j := & \begin{cases} -|\theta_{m_0+2j-2}|+\sum_{k=1}^{m_j} |x_k^j|   & \quad \qquad\qquad \quad \text{for $ 1\leq j \leq u$}\\
					-|y_j| & \qquad\qquad \quad \quad \text{for $u< j \leq u+v$}
				\end{cases}
			\end{aligned}
		\end{align}
		and $c_0: = 4-4g-2b-\sum_{j=1}^{u+v} c_j.$
		By Formulas \eqref{eq:windings_i}, \eqref{eq:winding-boundaryi} and  \eqref{equ:HP}, the integers $a_i, b_i$ correspond to  the winding numbers of the simple closed curves $s_i$ and $t_i$, respectively, while $c_j$ represents the winding number of  the $j$-th boundary component. That is, the sequence \eqref{align:allthewindingnumbers} can be written as 
		\begin{align}\label{align:windingnumberse}
			W_\eta =\{c_0, c_1, \dotsc, c_{b-1}, a_1, \dotsc, a_g, b_1, \dotsc, b_g\}.
		\end{align}
		
		\begin{Ex}\label{Ex:windingnumberg=2}
			Let $A$ be a graded gentle algebra of the form $(g=2;1;0)$, and let $(S,M,\eta, \zD)$ denote the corresponding standard surface model. The surface $S$ is of genus $2$ with a single boundary and exactly one stop on it, and  $\zD$ cuts $S$ into a single polygon, as illustrated in Figure \ref{Fig:polygon}. 
			
			By applying \eqref{align:windingnumbersegments} to the left polygon of Figure \ref{Fig:polygon}, we deduce $$w_{\eta}(s_i) =|\alpha_i|+|\beta_i|-1 \quad \text{and} \quad w_{\eta}(t_i) = |\beta_i|+|\gamma_i|-1$$ for $i=1,2$.  For the right polygon, the simple closed curve $u$, which is homotopic to the boundary, is subdivided  by the dissection into segments $u_1, u_2, \dotsc, u_8$ in sequential order. By \eqref{align:windingnumbersegments} we have $$w_\eta(u_8) = -6+|\alpha_1|+|\beta_1|+|\gamma_1|+|\delta_1|+|\alpha_2|+|\beta_2|+|\gamma_2|.$$ Note that $u_1$ is an oriented angle from $\ell_1$ to $\ell_2$ and thus $w_\eta(u_1) = -|\gamma_1|$. Similarly, we have 
			$$w_\eta(u_2) = -|\beta_1|,\; w_\eta(u_3)= -|\alpha_1|, \dotsc,  \; w_\eta(u_7)=-|\alpha_2|.$$ 
			So $w_\eta(\partial S)=w_\eta(u) = \sum_{i=1}^8 w_\eta(u_i) = -6$, which satisfies the Poincar\'e--Hopf index theorem. 
		\end{Ex}

		\begin{figure}[H]
			\begin{tikzpicture}[scale=0.4, transform shape]
				\begin{scope}[decoration={markings, mark=at position 0.5 with {\arrow{>}}}] 
					\foreach \x in {0,1,...,7}
					{
						\draw[thick, green] (\x*40-70:5) to (\x*40-30:5);
					}
					\foreach \x in {1,..., 7}
					{\begin{scope}[shift={(\x*40-70:5)}, rotate=\x*40+20]
							\draw[white, line width=5pt] (0,0) to (20:.5);
							\draw[white, line width=5pt] (0,0) to (160:.5);
							\draw[bend left, postaction={decorate}] (160:.5) to (20:.5);
						\end{scope}
					}
					\foreach \x in {1,..., 8}
					{\begin{scope}[shift={(\x*40-70:5)}, rotate=\x*40+20]
							\coordinate (a\x) at (160:1.5);
						\end{scope}
					}
					\draw[] (-70:5) to [curve through = {.. (-90:4)..}] (-110:5);
					\node[fill,circle, red, inner sep=0pt, minimum size=6pt] at (-90:4){};
					\draw[bend right, postaction={decorate}] (a3) to (a1);
					\draw[bend right, postaction={decorate}] (a4) to (a2);
					\draw[bend right, postaction={decorate}] (a7) to (a5);
					\draw[bend right, postaction={decorate}] (a8) to (a6);
					\node at (2.3,-1)  {\large $s_1$};
					\node at (1.6,2)  {\large $t_1$};
					\node at (-2.2,-1)  {\large $t_2$};
					\node at (-1.7,2)  {\large $s_2$};
					\node at (-30:4.3)  {\large $\alpha_1$};
					\node at (10:4.3)  {\large $\beta_1$};
					\node at (50:4.3)  {\large $\gamma_1$};
					\node at (90:4.3)  {\large $\delta_1$};
					\node at (130:4.3)  {\large $\alpha_2$};
					\node at (170:4.3)  {\large $\beta_2$};
					\node at (210:4.3)  {\large $\gamma_2$};
					\node at (-50:5)  {\large $\ell_1$};
					\node at (-10:5)  {\large $\ell_2$};
					\node at (30:5)  {\large $\ell_1$};
					\node at (70:5)  {\large $\ell_2$};
					\node at (110:5)  {\large $\ell_3$};
					\node at (150:5)  {\large $\ell_4$};
					\node at (190:5)  {\large $\ell_3$};
					\node at (230:5)  {\large $\ell_4$};
				\end{scope}
				\begin{scope}[shift={(15,0)}, decoration={markings, mark=at position 0.5 with {\arrow{>}}}] 
					\foreach \x in {0,1,...,7}
					{
						\draw[thick, green] (\x*40-70:5) to (\x*40-30:5);
					}
					
					\foreach \x in {1,..., 7}
					{\begin{scope}[shift={(\x*40-70:5)}, rotate=\x*40+20]
							\draw[white, line width=5pt] (0,0) to (20:.4);
							\draw[white, line width=5pt] (0,0) to (160:.4);
							\draw[bend left, postaction={decorate}] (160:.4) to (20:.4);
							\draw[postaction={decorate}] (160:1.5) ..controls(90:2).. (20:1.5);
							\coordinate (c\x) at (90:2.1);
						\end{scope}
					}
					
					\foreach \x in {0, 8}
					{\begin{scope}[shift={(\x*40-70:5)}, rotate=\x*40+20]
							\coordinate (a\x) at (160:1.5);
							\coordinate (b\x) at (20:1.5);
						\end{scope}
					}
					\draw[postaction={decorate}] (a8) ..controls(90:-2).. (b0);
					\node at (c1) {$u_3$};
					\node at (c2) {$u_2$};
					\node at (c3) {$u_1$};
					\node at (c4) {$u_4$};
					\node at (c5) {$u_7$};
					\node at (c6) {$u_6$};
					\node at (c7) {$u_5$};
					\node at (90:-2) {$u_8$};
					
					\draw[] (-70:5) to [curve through = {.. (-90:4)..}] (-110:5);
					\node[fill,circle, red, inner sep=0pt, minimum size=6pt] at (-90:4){};
					\node at (-30:4.3)  {\large $\alpha_1$};
					\node at (10:4.3)  {\large $\beta_1$};
					\node at (50:4.3)  {\large $\gamma_1$};
					\node at (90:4.3)  {\large $\delta_1$};
					\node at (130:4.3)  {\large $\alpha_2$};
					\node at (170:4.3)  {\large $\beta_2$};
					\node at (210:4.3)  {\large $\gamma_2$};
					\node at (-50:5)  {\large $\ell_1$};
					\node at (-10:5)  {\large $\ell_2$};
					\node at (30:5)  {\large $\ell_1$};
					\node at (70:5)  {\large $\ell_2$};
					\node at (110:5)  {\large $\ell_3$};
					\node at (150:5)  {\large $\ell_4$};
					\node at (190:5)  {\large $\ell_3$};
					\node at (230:5)  {\large $\ell_4$};
				\end{scope}
			\end{tikzpicture}
			\caption{The polygon cut out by the standard admissible dissection $\Delta$ for $g=2, b=1=m_0$. The corresponding gentle algebra $A(\Delta)$ is described in \eqref{equ:An}.  In the left polygon, there are four closed curves $s_i, t_i$, for $i=1,2$, whereas the right polygon contains a single closed curve $u$, subdivided  into $8$ segments $u_i$ by the green arcs in sequential order. Note that $u$ is homotopic to the (unique) boundary component.}
			\label{Fig:polygon}
		\end{figure}

		Next, we proceed to derive the explicit expressions for the invariants of interest using the stated winding number formula. The invariant $\w{\mathcal A}(\eta)$ in the case of $g =1$ can be calculated using the integers $a_i, b_i, c_j$ introduced in \eqref{align:numbersofthesumgradingsAn}. More precisely, we have 
		\begin{align}\label{align:wAA1}
			\begin{aligned}
				\w{\mathcal A}(\eta)&=\gcd(w_{\eta}(s_1), w_{\eta}(t_1), w_{\eta}(\partial_{0}S)+2, \dots, w_{\eta}(\partial_{b-1}S)+2)\\
				&=\gcd(a_1, b_1,c_0+2, c_1+2, \dotsc, c_{b-1}+2)\\
				&=\gcd(a_1, b_1,c_1+2,\dotsc, c_{b-1}+2),
			\end{aligned}
		\end{align}
		where the third equality follows since $c_0 +2=-\sum_{j=1}^{b-1}(c_j +2)$ by \eqref{algin:PHindex}. In particular, if $g=1 =b$ then 
		\begin{align}\label{align:widetildeAinvariant}
			\w{\mathcal A}(\eta)=\gcd(|\alpha_1|+|\beta_1|-1, |\beta_1|+|\gamma_1|-1) = \gcd(a_1,b_1).
		\end{align}

		In the case of $g>1$, we have the following descriptions for the invariant $\sigma$ and the Arf invariant $\mathcal A(\eta)$.
		\begin{enumerate}[\rm(a)]
			\item If there exists $1\leq i \leq g$ or $0\leq j< b$ such that at least one of $a_{i}, b_{i}, c_j$  is odd, then $\sigma(\eta) = 1$;
			\item  If  $a_i, b_i$ and $c_j$ are even for all $1\leq i \leq g$ and $0\leq j <b$, then  $\sigma(\eta) = 0$;
			\item Under the assumption in (b), if we further assume that  $c_j= 2\ \mod 4$ for all $0\leq j < b$, then the Arf invariant $\mathcal A(\eta)$ is defined by 
			\[
			\mathcal A(\eta) := 
			\sum_{i=1}^g (\frac{1}{2}a_i +1) (\frac{1}{2}b_i+1) \ \mod 2.
			\] 
			
		\end{enumerate}

		The following result follows directly from Theorem \ref{Prop:homotopic_linefield}. 
		\begin{Prop}\label{prop:derivedinvariantstandard}
			Let $A$ and $A'$ be two graded gentle algebras of the form $(g; m_0, \dotsc, m_u;v)$. Let $a_i,b_i,c_j$ and $a_i',b_i',c_j'$ be the integers defined as in \eqref{align:numbersofthesumgradingsAn} associated to $A$ and $A'$ respectively. Let $(S, M, \eta, \zD)$ and $(S,M, \eta', \zD)$ be the standard surface models of $A$ and $A'$. Then there exists an orientation preserving homeomorphism $\varphi: S\ra S$ such that $\varphi(M)=M$ and $\varphi_{*}(\eta)$ is homotopic to $\eta'$  if and only if up to permutation of indices  we have
			$c_j=c_j'$ for each $0\leq j\le b-1$ and in addition 
			\begin{enumerate}[\rm (a)]
				\item if $g=1$ then $\w{\mathcal A}(\eta) = \w{\mathcal A}(\eta')$.
				\item if $g>1$ then one of the following conditions holds 
				\begin{itemize}
					\item 
					$\sigma(\eta)=1=\sigma(\eta')$;
					\item 
					$\sigma(\eta)=0=\sigma(\eta')$ and there exists $0\leq j \le b-1$ such that $c_j = 0 \ \mod 4$;
					\item 
					$\sigma(\eta)=0=\sigma(\eta')$  and $c_j = 2\ \mod 4$ for all $0\leq j \le b-1$,  and 
					\begin{equation}\label{Arf} \sum_{i=1}^g  (\frac{1}{2}a_i 
						+1) (\frac{1}{2}b_i +1) = \sum_{i=1}^g (\frac{1}{2}a'_i 
						+1) (\frac{1}{2}b'_i +1)\  \mod 2.\end{equation}
				\end{itemize}
			\end{enumerate}
			Under the above conditions, $A$ is derived equivalent to $A'$.
		\end{Prop}

		\begin{Rem}
			In section \ref{Section:derivedinvariant}, we will show  that the conditions in Proposition \ref{prop:derivedinvariantstandard} are also necessary for $A$ and $A'$ to be derived equivalent.
		\end{Rem}

		\begin{Rem}\label{Rem:furtherstandard}
			Let $A$ be a graded gentle algebra of the form $(g; m_0, \dotsc, m_u; v)$ with integers $a_i,b_i, c_j$ as above. From Proposition \ref{prop:derivedinvariantstandard}, we have the following observations. 
			
			\begin{enumerate}[\rm (1)]
				\item Note that, in general, there are different gradings on $A$ such that the invariants considered in Proposition \ref{prop:derivedinvariantstandard} remain unchanged (so that the correspoding algebras are derived  equivalent). In the following, we list some possible changes of gradings, which will be used in the proof of Theorem \ref{Cor:siltingexistence}.
				
				\begin{enumerate}[(a)]
					\item
					Since $c_j$ is fixed for $\eta$, the only constraint for the gradings of the arrows $x_k^j$ in $Q(g;m_0,\dotsc, m_u; v)$ is given by
					\[
					c_j = -|\theta_{m_0+2j-2}|+\sum_{k=1}^{m_j} |x_k^j|.
					\]
					It follows that we can choose $|x_k^j|\ll 0$ for each $1\leq k \leq m_j$ so that $|\theta_{m_0+2j-2}|<0$ and the above equality holds. 
					
					\item Recall that the winding number of the non-stopped boundary component corresponding to $y_i$ is given by $-|y_i|$ by \eqref{align:numbersofthesumgradingsAn}, thus the grading of $y_i$ cannot be changed.
					\item 
					For the case $g=1$, if $\widetilde{A}(\eta) \neq 0$, then we can adjust the gradings of $\za_1$, $\zb_1$ and $\zg_1$ to be negative so that $\w{A}(\eta)$ is unchanged. This follows from the fact that by \eqref{align:wAA1},  $\widetilde A(\eta) = \gcd (|\alpha_1|+|\beta_1|-1, |\beta_1|+|\gamma_1|-1, c_1+2,\dotsc, c_{b-1}+2).$
					
					\item
					For the higher genus case, since the invariants involving $a_i, b_i$ are considered  $\mod 2$, we can  choose $|\alpha_i|, |\beta_i| < 0$ for $1\leq i \leq g$ such that $A(\eta)$ unchanged.
				\end{enumerate}
				The new gradings  give rise to a line field $\eta'$, which is in the same orbit of $\eta$ under the mapping class group by Proposition \ref{prop:derivedinvariantstandard}. Denote by $A'$ the algebra with the new gradings. As a result, if all $|y_i|\le 0$ and $\widetilde{A}(\eta) \neq 0$ for $g =1$, the algebra $A'$, which is derived equivalent to $A$, is non-positive.
				
				\item Similar to (1), under certain conditions, we can consider a different change of gradings,  which we will use in the proof of Theorem \ref{Thm:partialsiltingmainresult}.
				\begin{itemize}
					\item 
					Assume $g=1$, $b>1$ and   $\w{A}(\eta) = \gcd(c_1+2, \dots, c_{b-1}+2)$.  Then $A$ is derived equivalent to a graded gentle algebra $A'$ of the same form, whose grading yields numbers $a_1',b_1',c_j$ such that $a_1'=0=b_1'$. 
					The reason is as follows. Denote  by $\eta'$ the line field induced by the grading of $A'$. Then
					\begin{align*}\w{A}(\eta') &= \gcd(a_1', b_1', c_1+2, \dots, c_{b-1}+2)\\
						&= \gcd(c_1+2, \dots, c_{b-1}+2)\\
						&=\w{A}(\eta).\end{align*}
					It then follows from Proposition \ref{prop:derivedinvariantstandard} that $A$ and $A'$ are derived equivalent. 
					\item Assume $g>1$. Then by Proposition~\ref{prop:derivedinvariantstandard} (b),   $A$ is derived equivalent to a graded gentle algebra $A'$ of the same form, with numbers $a_i',b_i',c_j$ such that  $a_1'=0=b_1'$ and $a_i',b_i'<0$ for each $2\leq i\leq g$. 
				\end{itemize}
			\end{enumerate}
		\end{Rem}

		\begin{Rem}\label{Rem:nonzero}
			If $S$ is of genus $1$ and has a non-stopped boundary component whose winding number is non-negative then by \eqref{align:wAA1}  we have
			\[\widetilde A(\eta)=\gcd(a_1, b_1,c_1+2,\dotsc, c_{b-1}+2)\not=0.\]
		\end{Rem}

		Consider the following assumptions.
		\begin{Assu} \label{asumption:silting}
			Let $(S, M, \eta)$ be a graded surface with stops.
			\begin{enumerate}[\rm (1)]
				\item Assume that the winding number of each non-stopped boundary component (if it exists) of $S$ is non-negative.
				\item Assume that $\widetilde{A}(\eta) \neq 0$ for the case where   $S$ is a torus with exactly one boundary component and one stop.
			\end{enumerate}
		\end{Assu}

		Thanks to the study of above numerical  invariants, we have the following result. 
		\begin{Thm}\label{Cor:siltingexistence}
			Let $(S, M, \eta)$ be a graded surface with stops.
			Keep Assumption \ref{asumption:silting}. 
			Then there is an admissible dissection $\Delta$ on $(S, M, \eta)$ such that $A(\Delta)$ is non-positive. 
		\end{Thm}
		\begin{proof}
			It follows from Proposition \ref{prop:derivedinvariantstandard} that two line fields are in the same orbit under the action of the mapping class group of $S$ if and only if the associated numerical invariants coincide.  Consider the line field $\eta'$ constructed in Remark \ref{Rem:furtherstandard}(1).  We show the result by considering the following cases.

			\emph{Case 1} Assume $g\not=1$. We consider the standard dissection $\Delta'$ as illustrated in Figure \ref{Fig:StandardSurfaceModel} and  the associated gentle algebra $A(\Delta')$ is of standard form as in Definition \ref{Def:standardform}. 
			Note that Assumption \ref{asumption:silting}(1) is equivalent to that the grading of each loop $y_i$ is non-positive by \eqref{equ:C}. Then as explained in Remark \ref{Rem:furtherstandard}(1),  $A(\Delta')$ is non-positive in terms of the line field $\eta'$.

			\emph{Case 2} Assume $g =1$ and $\widetilde A(\eta) \neq 0$. We also  consider the standard dissection $\Delta'$ and the associated algebra $A(\zD')$. Since $\widetilde A(\eta)=\gcd(a_1, b_1,c_1+2,\dotsc, c_{b-1}+2) \neq 0,$  in the construction of $\eta'$ in Remark \ref{Rem:furtherstandard}(1)(c),  we can choose $a_1, b_1\ll 0$ so that  $|\alpha_1|, |\beta_1|$ and $|\gamma_1|$ are negative. That is, in this case, the gradings of all the arrows of $A(\zD')$ are non-positive. 
			
			\emph{Case 3}  Assume $g=1$ and $\widetilde A(\eta) = 0$. By Assumption \ref{asumption:silting}(2), this can only happen when either $\# M =1$ and there exist non-stopped boundary components, or $\# M > 1$. For the former subcase, since the winding number of each non-stopped boundary component is non-negative by Assumption \ref{asumption:silting}(1) it follows from Remark \ref{Rem:nonzero} that $\widetilde A(\eta) \neq 0$. Contradiction. Therefore, we have  $\# M > 1$. In this case, by \cite[Proposition 3.5]{CS22}, there exists an exceptional dissection $\Delta$ (i.e.\ the associated quiver is acyclic), so that We can shift the gradings of arcs in $\Delta$ so that $A(\Delta)$ is non-positive.

			Since $\eta$ and $\eta'$ are in the same orbit, it follows that there is a homoemorphism $\varphi\colon S \to S$ such that $\varphi_*(\eta') = \eta$. Thus  the dissection  $\Delta:=\varphi(\Delta')$ on $(S, M, \eta)$ gives rise to a non-positive graded gentle algebra $A(\Delta)$, which is derived equivalent to $ A(\Delta')$.   \end{proof}
		
		\begin{Rem}
			Theorem \ref{Cor:siltingexistence} shows that under Assumption \ref{asumption:silting},  $\mathcal W(S, M, \eta)$ admits a silting object (cf.\ Theorem \ref{Thm:existence_of_silting}); This generalizes  \cite[Theorem 5.7]{CJS}, which gives a partial result on the existence of silting objects. 
		\end{Rem}

		\subsection{The special cases of one boundary component and one stop.}
		\label{section:formA^{g}}
		In this subsection, we study the graded gentle algebras of the special form $(g; 1;0)$ such that $g\geq 1$ and there is only one boundary component and it has only one stop.
		This algebra was introduced in \cite{CJS,CS22}  and plays an important role in the study of the (non-)existence of silting objects.
		
		Note that in this case the quiver $Q(g;1;0)$ is given by 
		\begin{equation}
			\label{equ:An}
			\xymatrix{1 \ar@<1pc>[r]^-{\za_{1}} \ar@<-1pc>[r]^-{\zg_{1}}& 2 \ar[l]_-{\zb_{1}} \ar[r]^-{\zd_{1}}& 3 \ar@<1pc>[r]^-{\za_{2}} \ar@<-1pc>[r]^-{\zg_{2}} & 4 \ar[l]_-{\zb_{2}} \ar[r]^-{\zd_{2}} & \cdots \ar[r]^-{\zd_{g-1}} & 2g-1 \ar@<1pc>[r]^-{\za_{g}}  \ar@<-1pc>[r]^-{\zg_{g}}
				& 2g \ar[l]_-{\zb_{g}}}
		\end{equation}
		and the ideal $I(g;1;0)$ is generated by   $\{\za_{i}\zb_{i}, \zb_{i}\zg_{i}, \zg_{j}\zd_{j}, \zd_{j}\za_{j+1} \mid 1\le i\le g, 1\le j\le g-1\}.$ See Figure \ref{Fig:CanonicalSurfaceModel} for the corresponding surface model.

		\begin{figure}
			{ \tiny \begin{center}
					\begin{tikzpicture}
						\begin{scope}[scale=0.6]
							
							\shadedraw[top color= blue!15] (-30:9.5) arc(-30:210:9.5);
							\node[fill,circle, green, inner sep=0pt, minimum size=5pt] at (0,0){};
							\draw [thick] (210:9.5) ..controls (-6.6,-7).. (-4, -4.75);
							\draw [dashed] (210:9.5) ..controls (-6.5,-5).. (-4, -4.75);
							\draw [dashed] (210:9.5) ..controls (-6.5,-4.5).. (-4, -4.75);
							\draw [thick] (-30:9.5) ..controls (6.6,-7).. (4, -4.75);
							\draw [dashed] (-30:9.5) ..controls (6.5,-5).. (4, -4.75);
							\draw [dashed] (-30:9.5) ..controls (6.5,-4.5).. (4, -4.75);
							\draw [thick] (-4, -4.75) ..controls (0,-1.8).. (4, -4.75);
							
							\begin{scope}[rotate=80]
								\draw [thick,green] (0,0) ..controls (-2.5, 8.5) .. (0,8.5);
								\draw [thick,green] (0,0) ..controls (2.5, 8.5) .. (0,8.5);
								\draw [decoration={markings, mark=at position 0.45 with {\arrow{<}}}, postaction={decorate},thick,gray]  (0,6.5) ..controls (-.5,6.4) and (-.5, 9.6) .. (0,9.5);
								\node at (94:9)  {$s_1$};
								\draw [thick,dashed,gray] (0,6.5) ..controls (.5,6.4) and (.5, 9.6).. (0,9.5);
								\draw[fill=white] (-.8,6.3) ..controls (0,6) .. (.8,6.3);
								\draw [fill=white] (-.8,6.3) ..controls (0,6.6) .. (.8,6.3);
								\draw[fill=white] (-.8,6.3) -- (-.9,6.25);
								\draw[fill=white] (.8,6.3) -- (.9,6.25);
								\draw [decoration={markings, mark=at position 0.45 with {\arrow{<}}}, postaction={decorate},thick,gray]   (0, 6.25) circle (1.2);
								\node at (83:4.8)  {$t_1$};
								\draw [thick,green]  (0,0) ..controls (-.3, 6) .. (0,6.07);
								\draw [thick,green]  (0,0) ..controls (60:5) and (60:9) .. (65:9.5);
								\draw [dashed,green,thick]  (65:9.5) ..controls (70:9) and (3,5.5) .. (.4,6);
								\draw [dashed,green,thick]  (.4, 6) ..controls (.25, 6) .. (0, 6.07);

								\node at (100:2)  {$\gamma_1$};
								\node at (83:2)  {$\beta_1$};
								\node at (67:2)  {$\alpha_1$};
								\node at (53:2)  {$\delta_1$};
							\end{scope}
							\begin{scope}[rotate=20]
								\draw [thick,green] (0,0) ..controls (-2.5, 8.5) .. (0,8.5);
								\draw [thick,green] (0,0) ..controls (2.5, 8.5) .. (0,8.5);
								\draw [decoration={markings, mark=at position 0.45 with {\arrow{<}}}, postaction={decorate},thick,gray] (0,6.5) ..controls (-.5,6.4) and (-.5, 9.6) .. (0,9.5);
								\node at (94:9)  {$s_2$};
								\draw [thick,dashed,gray] (0,6.5) ..controls (.5,6.4) and (.5, 9.6).. (0,9.5);
								\draw[fill=white] (-.8,6.3) ..controls (0,6) .. (.8,6.3);
								\draw [fill=white] (-.8,6.3) ..controls (0,6.6) .. (.8,6.3);
								\draw[fill=white] (-.8,6.3) -- (-.9,6.25);
								\draw[fill=white] (.8,6.3) -- (.9,6.25);
								\draw [decoration={markings, mark=at position 0.45 with {\arrow{<}}}, postaction={decorate},thick,gray]   (0, 6.25) circle (1.2);
								\node at (83:4.8)  {$t_2$};
								\draw [thick,green]  (0,0) ..controls (-.3, 6) .. (0,6.07);
								\draw [thick,green]  (0,0) ..controls (60:5) and (60:9) .. (65:9.5);
								\draw [dashed,green,thick]  (65:9.5) ..controls (70:9) and (3,5.5) .. (.4,6);
								\draw [dashed,green,thick]  (.4, 6) ..controls (.25, 6) .. (0, 6.07);
								\node at (100:2)  {$\gamma_2$};
								\node at (83:2)  {$\beta_2$};
								\node at (67:2)  {$\alpha_2$};
							\end{scope}
							\begin{scope}[rotate=-50]
								\draw [thick,green] (0,0) ..controls (-2.5, 8.5) .. (0,8.5);
								\draw [thick,green] (0,0) ..controls (2.5, 8.5) .. (0,8.5);
								\draw [decoration={markings, mark=at position 0.45 with {\arrow{<}}}, postaction={decorate},thick,gray] (0,6.5) ..controls (-.5,6.4) and (-.5, 9.6) .. (0,9.5);
								\node at (94:9)  {$s_g$};
								\draw [thick,dashed,gray] (0,6.5) ..controls (.5,6.4) and (.5, 9.6).. (0,9.5);
								\draw[fill=white] (-.8,6.3) ..controls (0,6) .. (.8,6.3);
								\draw [fill=white] (-.8,6.3) ..controls (0,6.6) .. (.8,6.3);
								\draw[fill=white] (-.8,6.3) -- (-.9,6.25);
								\draw[fill=white] (.8,6.3) -- (.9,6.25);
								\draw [decoration={markings, mark=at position 0.45 with {\arrow{<}}}, postaction={decorate},thick,gray]  (0, 6.25) circle (1.2);
								\node at (83:4.9)  {$t_g$};
								\draw [thick,green]  (0,0) ..controls (-.3, 6) .. (0,6.07);
								\draw [thick,green]  (0,0) ..controls (60:5) and (60:9) .. (65:9.5);
								\draw [dashed,green,thick]  (65:9.5) ..controls (70:9) and (3,5.5) .. (.4,6);
								\draw [dashed,green,thick]  (.4, 6) ..controls (.25, 6) .. (0, 6.07);
								\node at (100:2)  {$\gamma_g$};
								\node at (83:2)  {$\beta_g$};
								\node at (67:2)  {$\alpha_g$};
							\end{scope}
							\foreach \n in {0, 2, 4}
							{\node at (74-2.5*\n:2) [circle, fill, inner sep=.5pt]{};
								\node at (74-2.5*\n:7) [circle, fill, inner sep=1pt]{};
							}
							\draw[fill=white] (0,0) circle (1.5);
							\draw[fill=gray!15] (0,0) circle (1.5);
							\node[fill,circle, red, inner sep=0pt, minimum size=4pt] at (-90:1.5){};
							\draw [decoration={markings, mark=at position 0.45 with {\arrow{>}}}, postaction={decorate}] (178:1.5)--(177:1.498);
							
							\draw [decoration={markings, mark=at position 0.45 with {\arrow{>}}}, postaction={decorate}] (161:1.5)--(160:1.498);
							\draw [decoration={markings, mark=at position 0.45 with {\arrow{>}}}, postaction={decorate}] (146:1.5)--(145:1.498);
							\draw [decoration={markings, mark=at position 0.45 with {\arrow{>}}}, postaction={decorate}] (131:1.5)--(130:1.498);
							\draw [decoration={markings, mark=at position 0.45 with {\arrow{>}}}, postaction={decorate}] (117:1.5)--(116:1.498);
							\draw [decoration={markings, mark=at position 0.45 with {\arrow{>}}}, postaction={decorate}] (99:1.5)--(98:1.498);
							\draw [decoration={markings, mark=at position 0.45 with {\arrow{>}}}, postaction={decorate}] (85:1.5)--(84:1.498);
							\draw [decoration={markings, mark=at position 0.45 with {\arrow{>}}}, postaction={decorate}] (65:1.5)--(64:1.498);
							\draw [decoration={markings, mark=at position 0.45 with {\arrow{>}}}, postaction={decorate}] (48:1.5)--(47:1.498);
							\draw [decoration={markings, mark=at position 0.45 with {\arrow{>}}}, postaction={decorate}] (32:1.5)--(31:1.498);
							\draw [decoration={markings, mark=at position 0.45 with {\arrow{>}}}, postaction={decorate}] (15:1.5)--(14:1.498);
							
						\end{scope}
					\end{tikzpicture}
			\end{center}}
			\caption{Standard surface model of an algebra of the form $(g;1;0)$, where the arcs in green form an admissible dissection.}
			\label{Fig:CanonicalSurfaceModel}
		\end{figure}

		\begin{Rem}\label{Rem:Agderived} Let $A$ and $A'$ be two graded gentle algebras of the form $(g;1;0)$ with the numbers $a_i,b_i$  and $a_i', b_i'$ as defined in \eqref{align:numbersofthesumgradingsAn}. Note that $c_0=c_0'=-2.$  
			By Proposition \ref{prop:derivedinvariantstandard} if $a_i=a_i'$ and $b_i=b_i'$ then $A$ and $A'$ are derived equivalent. 
		\end{Rem}

		The following lemma shows a special feature (in terms of the surface model) of the algebra of the form $(1;1;0)$, which will be used in the proof of Proposition \ref{Prop:no_silting} below.

		\begin{Lem}\label{Lem:A^{(1)}}
			Let $(S, M, \eta)$ be a graded surface with stops such that $S$ is a torus with exactly one boundary component and with one stop on the boundary (i.e.\ $b=1=\#M$). Then for any  admissible dissection $\zD$ on $S$, the corresponding graded gentle algebra is of the form $(1;1;0)$
			\begin{equation*}
				\xymatrix{1 \ar@<1pc>[r]^-{\za_{1}} \ar@<-1pc>[r]^-{\zg_{1}}& 2 \ar[l]_-{\zb_{1}} }
			\end{equation*}
			and moreover $\w{\mathcal A}(\eta) = \gcd(|\alpha_1|+|\beta_1|-1,|\beta_1|+|\gamma_1|-1) $.
		\end{Lem}
		\begin{proof}
			Let $A=\Bbbk Q/I$ be the graded gentle algebra given by an admissible dissection $\zD$ on $S$. Recall from \cite[Proposition 1.11]{APS19} that $\#\Delta =\#M+b+2g-2$, it follows that $\#Q_0 =\#\zD =2$.  By \cite[Proposition 3.6]{CSS}, we have that $\#M = 2 \#Q_0  - \#Q_1$, which yields $\#Q_1 =3$. Therefore, since $A$ is homologically smooth and proper, it must be of the standard form $(1;1;0)$, as there is only one gentle algebra with two vertices and three arrows (without loops) between the two vertices. 
			
			Denote $\zD=\{\ell_1, \ell_2\}$. Note that $\Delta$ might not be the standard dissection as in Figure \ref{Fig:CanonicalSurfaceModel}.  Nevertheless, cutting the surface $S$ along $\zD$ we obtain a $5$-gon as illustrated in Figure \ref{Fig:5gon}. 
			\begin{figure}[ht]
				\begin{tikzpicture}[scale=0.3, transform shape]
					\begin{scope}
						\foreach \x in {0,1,2,3}
						{
							\draw[thick, green] (\x*72-54:5) to (\x*72+18:5);
						}
						\draw[] (-126:5) to [curve through = {.. (-90:3.5)..}] (-54:5);
						\node[fill,circle, red, inner sep=0pt, minimum size=9pt] at (-90:3.5){};
						\foreach \x in {0, 1,2,3}
						{\begin{scope}[ shift={(\x*72-54:5)},  rotate=\x*72]
								\coordinate (a\x) at (72:1);
							\end{scope}
						}
						\foreach \x in {1,2,3, 4}
						{\begin{scope}[ shift={(\x*72-54:5)},  rotate=\x*72-72]
								\coordinate (b\x) at (-108:1);
								\coordinate (c\x) at (-108:{5*sin(36)});
							\end{scope}
						}
						\draw[white, line width=5pt] (162:5) -- (b3);
						\draw[white, line width=5pt] (162:5) -- (a3);
						\draw[white, line width=5pt] (90:5) -- (b2);
						\draw[white, line width=5pt] (90:5) -- (a2);
						\draw[white, line width=5pt] (18:5) -- (b1);
						\draw[white, line width=5pt] (18:5) -- (a1);
						\foreach \x in {1,2,3}
						{
							\draw[bend left, decoration={markings, mark=at position 0.5 with {\arrow{>}}}, postaction={decorate}] (b\x) to (a\x);
						}
						\node at (18:3.5)  {\huge $\alpha_1$};
						\node at (90:3.5)  {\huge $\beta_1$};
						\node at (162:3.5)  {\huge $\gamma_1$};
						\node[right] at (c1)  {\huge $\ell_1$};
						\node[above] at (c2)  {\huge $\ell_2$};
						\node[above] at (c3)  {\huge $\ell_1$};
						\node[left] at (c4)  {\huge $\ell_2$};
						\draw[decoration={markings, mark=at position 0.35 with {\arrow{>}}}, postaction={decorate}, bend right] (c4) to (c2);
						\draw[decoration={markings, mark=at position 0.35 with {\arrow{>}}}, postaction={decorate}, bend right] (c3) to (c1);
						\node at (1.85,1.6)  {\huge $t'$};
						\node at (-1.85, 1.6)  {\huge $s'$};
					\end{scope}
				\end{tikzpicture}
				\caption{Cutting the torus in Lemma \ref{Lem:A^{(1)}} along an arbitrary admissible dissection $\Delta=\{\ell_1,\ell_2\}$ into a $5$-gon, where the simple closed curves $s', t'$ are homotopic to (the smoothings of) $\ell_2, \ell_1$ respectively.}
				\label{Fig:5gon}
			\end{figure}
			
			Then using the formula in \eqref{align:windingnumbersegments} we  compute the winding numbers of the simple closed curves $s', t'$ in Figure \ref{Fig:5gon}
			\[
			w_\eta(s') = |\alpha_1|+|\beta_1|-1 \quad \text{and} \quad w_\eta(t') =|\beta_1|+|\gamma_1|-1.
			\]
			Therefore, we obtain that $ \w{\mathcal A}(\eta)=\gcd(|\alpha_1|+|\beta_1|-1,|\beta_1|+|\gamma_1|-1) $ since  $s',t'$ form a symplectic basis of $\mathrm{H}_1(\bar S)$, compare with Remark \ref{Remark:linefieldinvariant} (2).
		\end{proof}

		\begin{Rem}
			\label{Rem:notclosedunderderived}
			Note that Lemma \ref{Lem:A^{(1)}} does not hold for $(g;1;0)$ with $g>1$. For instance, on the surface model of the graded gentle algebra of the form $(2;1;0)$ there is an admissible dissection $\Delta$ such that the associated gentle algebra $A(\Delta)$ is given by the following quiver \[
			\xymatrix{1 \ar@<.3pc>[r]^-{\za_{1}}  \ar@<-.3pc>[r]_-{\zb_{1}}  & 2\ar@<.3pc>[r]^-{\za_{2}}\ar@<-.3pc>[r]_-{\zb_{2}}  & 3 \ar@<.3pc>[r]^-{\za_{3}} \ar@<-.3pc>[r]_-{\zb_{3}}
				& 4 \ar@<-.3pc>@/_1.5pc/[lll]_-{\delta}
			}
			\]
			with relations $\{\alpha_1\beta_{2}, \beta_1\alpha_2, \alpha_2\beta_3, \beta_2\alpha_3, \delta \alpha_1, \beta_3\delta\}$.
		\end{Rem}
		
		\section{The behavior of silting objects in surface models}
		Recall that an object $P\in \per(A)$ is called \emph{pre-silting} if $$\Hom_{\per(A)}(P, P[i])=0,\quad \text{for each $i> 0$},$$
		and it is called \emph{silting} if, moreover, $\per(A)=\thick(P)$.  We say that a (pre-)silting object $P$ is {\it basic} if its indecomposable direct summands are  pairwise non-isomorphic. In the sequel, whenever we mention a (pre-)silting object, we always assume it to be  basic.

		\subsection{Silting reduction of graded gentle algebras and surface cuts} In this subsection, we recall from \cite{CJS} some results on the silting reduction of graded gentle algebras, which play an important role in the sequel. Geometrically, performing silting reduction in the perfect derived categories of graded gentle algebras corresponds to cutting the surfaces  along arcs.

		\subsubsection{Silting reduction}
		
		Let $A=\Bbbk Q/I$ be a graded gentle algebra (not necessarily homologically smooth or proper).  Let $e$ be the sum of $n$ vertex idempotents (i.e.\ idempotents corresponding to some vertices in $Q$). Without loss of generality, we may re-label the vertices so that $e=e_{1}+\dots+e_{n}$.   We first recall the following definition.
		
		\begin{Def}[{\cite[Definition 2.8]{CJS}}]\label{definition:left algebra}
			Given an idempotent $e$ as above, we define the graded gentle algebra $A_{e}:=\Bbbk Q_{e}/ I_{e}$  as follows
			\begin{enumerate}[\rm(1)]
				\item The set of vertices of $Q_{e}$ is $Q_{0}\backslash \{1,2,\dotsc, n\}$.
				\item  The arrows of $Q_{e}$ are of the form $[\za_{1}\cdots\za_{k}]:s(\za_{1})\ra t(\za_{k})$, where $\za_{1}\cdots\za_{k}$ is a path of $Q$ with $\za_i \za_{i+1} \in I$ and $t(\za_i) \in \{1,2,\dotsc,n \}$, for $1\leq i \leq k-1$,  and $s(\za_{1}), t(\za_{k})\notin \{1,2,\dotsc,n \}$. In particular, there is an arrow $[\alpha]$ in $Q_{e}$ for each arrow $\alpha$ in $Q$ such that $s(\alpha), t(\alpha) \notin \{1, 2, \dotsc,n\}$. 
				\item  The grading of the arrow $[\za_{1}\cdots\za_{k}] $ is $|[\za_{1}\cdots\za_{k}]|:=\sum_{i=1}^{k}|\za_{i}|-k+1$.
				\item The ideal $I_{e}$ is generated by the set $\{[\za_{1}\cdots\za_{k}][\zb_{1}\cdots\zb_{l}]\mid \za_{k}\zb_{1}\in I  \}$.
			\end{enumerate} 
		\end{Def}
		If $e=0$, we set $A_{e}=A$. The following result is due to \cite{CJS}, which will be used frequently. We refer to \cite{BBD} for the definition of recollements. 
		\begin{Thm}[{\cite[Theorems 2.10, 3.1 and 5.1 (1)]{CJS}}]\label{Thm:siltingreduction} Let $A$ be any graded gentle algebra (not necessarily homologically smooth or proper). Let $e=e_1+\dotsb+e_n$ be the sum of some vertex idempotents of $A$ as above. Then the following hold.
			\begin{enumerate}[\rm(a)]
				\item There is a recollement 
				\begin{equation} \label{mainrecoll}
					\xymatrixcolsep{4pc}\xymatrix{
						\D(A_{e}) \ar[r]^{i_{*}=i_{!}}
						&\D(A) \ar@/_1.5pc/[l]_-{i^{*}} \ar@/^1.5pc/[l]_-{i^{!}}
						\ar[r]^{j^{*}=j^!}
						&\D(eAe) \ar@/^-1.5pc/[l]_-{j_{!}} \ar@/^1.5pc/[l]_-{j_{*}}
				} \end{equation}
				where  $j_{!}=?\ot^{\bf L}_{eAe}eA, \ j^{*}=?\ot_{A}^{\bf L}Ae$ and $j_{*}=\RshHom_{eAe}(Ae, ?)$. 
				\item Assume $A$ is such that $A^{\gg 0}=0$  and $A^0$ is finite dimensional.  Let $P$ be a silting object in  $\thick(eA)$ of $\per(A)$. Then the silting reduction $\per(A)/\thick(P)$ is triangle equivalent to $\per(A_{e})$ and there is a bijection
				\[ \{\text{(pre-)silting objects in $\per(A_{e})$}\} \overset{1:1}{\longleftrightarrow} \{ \mbox{(pre-)silting objects in $\per(A)$  containing $P$ as a direct summand}\}.\]
				\item  Let $P=eA$. Then the graded gentle algebra $A_{e}$ admits the surface model $(S_{P}, M_{P}, \eta_{P}, \zD_{P}\backslash \{ \ell_{i}\}_{i=1}^{n})$ which is obtained by cutting  and contracting $S$ along the arcs $\{ \ell_{i}\}_{i=1}^{n}$. Here, $\ell_i$ corresponds  to the vertex $e_i$.
			\end{enumerate}
		\end{Thm}
		\begin{Rem}\label{Rem:thm5.2b}
			Since $\RshHom_{A}(eA, eA) \simeq eAe$ as dg algebras, it follows from \cite[Subsection 4.2]{Keller94} that there is a triangle equivalence 
			\[
			\thick(eA) \simeq \per(eAe), \quad X \mapsto \RshHom_{A}(eA, X).
			\] 
			In particular, this induces a bijection between (pre-)silting objects in $\thick(eA)$ and in $\per(eAe)$.  
		\end{Rem}

		Let $(S,M,\eta, \zD)$ be a graded surface with stops. Assume that  $\partial_{0}S, \dotsc, \partial_{u}S$ are stopped boundary components and $\partial_{u+1}S, \dotsc, \partial_{u+v}S$ are non-stopped boundary components. Let $\zg$ be an arc connecting a stopped boundary component  (say, $\partial_0 S$) to  a non-stopped boundary component (say, $\partial_{u+v}S$). Let $(S_{\zg}, M_{\zg}, \eta_{\zg})$ be the graded surface with stops obtained by cutting along $\zg$. It is clear that the genus of $S$ and $S_\zg$ are the same (say, $g$). 
		
		Recall that we denote by $a_i, b_i$ (resp. $\w{a_i},\w{b_i}$) the winding numbers of the simple closed curves $s_i$ and $t_i$  (see Figure \ref{Fig:StandardSurfaceModel}) of $S$ (resp. $S_{\zg}$), respectively. Assume that the arc $\gamma$ does not intersect $s_i, t_i$. As an application of silting reductions, we have the following topological result. 
		
		\begin{Prop}\label{prop:cutarcnonstop}
			The cut surface $(S_{\zg}, M_{\zg}, \eta_{\zg})$  has $u+1$ stopped boundary components with the same number of stops as in $\{\partial_0S, \dotsc, \partial_uS\}$ and $v-1$ non-stopped boundary components $\{\partial_{u+1} S, \dotsc,  \partial_{u+v-1}S\}$. We have $w_{\eta_{\zg}}(\partial_{0}S)= w_{\eta}(\partial_{0}S)+ w_{\eta}(\partial_{u+v}S)+2$ and $w_{\eta_{\zg}}(\partial_{i}S)= w_{\eta}(\partial_{i}S)$  for all $1\le i \leq u+v-1$. Moreover, $a_i=\w{a_i}$ and $b_i=\w{b_i}$ for all $1\le i\le g$.
		\end{Prop}
		\begin{proof}
			We extend $\gamma$ into a standard dissection $\Delta$ as in Figure \ref{Fig:StandardSurfaceModel} whose corresponding algebra $A(\Delta)$ is given as in Subsection \ref{subsection:standardform} so that the rightmost vertex (denoted by $e_\gamma$) corresponds to $\gamma$. Then  it follows from Theorem \ref{Thm:siltingreduction} that the cut surface $S_\gamma$ along $\gamma$ also admits a standard dissection $\Delta_\gamma$ whose corresponding algebra $A(\Delta_\gamma) \simeq A(\Delta)_{e_\gamma}$  is just obtained from $A(\Delta)$ by removing the vertex $e_\gamma$ together with the incident arrows (The degree of each arrow does not change). By Formulas \eqref{eq:winding-boundaryi} and \eqref{equ:C} we have $w_{\eta_{\zg}}(\partial_{i}S)= w_{\eta}(\partial_{i}S)$   for all $1\le i \leq u+v-1$ and by Formula \eqref{equ:HP} we have $w_{\eta_{\zg}}(\partial_{0}S)= w_{\eta}(\partial_{0}S)+ w_{\eta}(\partial_{u+v}S)+2.$ Moreover, Formula \eqref{align:numbersofthesumgradingsAn} implies that $a_i=\w{a_i}$ and $b_i=\w{b_i}$ for all $1\le i\le g$.
		\end{proof}
		
		\subsection{Silting objects in the surface model}
		In this subsection, we show a remarkable property of silting objects in the perfect derived category of a graded gentle algebra in terms of the surface model. 
		Namely, in Proposition \ref{Prop:behaviorsiltingobjects} we show:
		\begin{center}{\it Silting objects give rise to admissible dissections.} \end{center} 
		This is one of our motivations to study silting objects for graded gentle algebras and it will play an essential role in the proof of the complete derived invariant in Theorem \ref{Thm:derivedeq}. 
		In the case of an ungraded finite dimensional gentle algebra, Proposition \ref{Prop:behaviorsiltingobjects} was shown in \cite[Theorem 5.2]{APS19}.
		
		We first give a useful observation.
		\begin{Lem}\label{Lem:siltingwinding}
			Let $A$ be a homologically smooth graded gentle algebra with surface model $(S, M, \eta, \zD)$. Assume $\per(A)$ admits a silting object $P$, then any non-stopped boundary component (if it exists) has non-negative winding number. 
		\end{Lem}
		\begin{proof}
			We may assume $A$ is of the standard form $(g;m_0,\dots,m_u;v)$ up to derived equivalence. Denote the non-stopped boundary components by $\partial_{u+1}S,\dots, \partial_{u+v}S$ which correspond to loops $y_1,\dots, y_v$ in $A$, respectively. By \eqref{equ:C}, we have
			\[
			w_\eta(\partial_{u+j}S) = -|y_j| \quad \text{for $1\leq j \leq v$}.
			\]
			If $|y_j|>0$ for some $j=1,\dots, v$. Write $k=s(y_j)=t(y_j)$. Then we have
			\begin{align}\label{align:nonnegativenonstop}
				\Hom_{\per(A)}(e_kA,e_kA[n])\simeq \h^{n}(e_kAe_k) \simeq \h^n(\Bbbk [y_j]) \neq 0 \quad \text{for $n \gg 0$}.
			\end{align}
			But on the other hand, $\per(A)$ has a silting object $P$,  it follows that  $\Hom_{\per(A)}(P,P[i])=0$ for each $i >0$ and $\per(A)=\thick(P)$. Then for any $X\in \per(A)$ we have $$\Hom_{\per(A)}(X,X[n])=0 \quad \text{for $n\gg 0$}, $$ since any $X \in \per(A)$ can be built out of $A$ by taking direct summands, extensions and shifts. In particular, we have $\Hom_{\per(A)}(e_kA, e_kA[\gg 0])=0$, which is a contradiction to  \eqref{align:nonnegativenonstop}. So $|y_j|\le 0$ for each $j=1,\dots, v$ and the assertion holds. 
		\end{proof}
		
		\begin{Prop}\label{Prop:behaviorsiltingobjects}
			Let $A$ be a homologically smooth graded gentle algebra with surface model $(S, M, \eta, \zD)$. Let $P$ be a basic silting object in  $\per(A)$. Then the graded arcs corresponding to $P$ form an admissible dissection in $(S, M, \eta)$.
		\end{Prop}

		\begin{proof} 
			Let  $P=\bigoplus_{i=1}^n P_i$ be a basic silting object in $\per(A)$, where each $P_i$ is indecomposable.  By the classification of indecomposable objects in $\per(A)$, see \cite[Theorem 4.3]{HKK17}, each $P_i$ corresponds to a curve $\ell_i$ in $S$ such that $\ell_i$ is either an  arc (with endpoints in $M$) or a primitive closed curve in $S\backslash \partial S$. We divide the proof into the following two steps.
			
			\emph{Step 1:} We show that $\{\ell_i\}_{i=1}^n$ forms an admissible collection.
			
			First we claim that $\ell_i$ cannot be a primitive closed curve for $1\leq i \leq n$. Indeed, if $\ell_i$ is a primitive closed curve then $\Hom_{\per(A)}(P_i, P_i[1]) \neq 0$, see \cite[Theorem 3.3]{OPS18} and Proposition \ref{Prop:simpleclosedcurves}, which contracts to the assumption that $P_i$ is a pre-silting object.
			
			Next we show that $\ell_i$ has no self-intersections or does not intersect $\ell_j$ in the interior of $S$.  For contradiction suppose that there is a (self-)intersection.  Then by \cite[Theorem 4.11]{QZZ22} and also \cite[Theorem 3.3]{OPS18}, we obtain that 
			$$
			\Hom_{\per(A)}(P_i, P_j[k_{i,j}]) \neq 0 \neq \Hom_{\per(A)}(P_j, P_i[1-k_{i,j}]) \quad \text{for some $k_{i,j} \in \mathbb Z$},
			$$ 
			where $k_{i,j}$ is the grading induced by the corresponding (self)-intersection. 
			We infer that $P_i \oplus P_j$  or $P_i$ if $i=j$ is not a pre-silting object. As a result, we obtain that the $\{\ell_i\}_{i=1}^n$ are pairwise non-intersecting (in the interior of $S$) arcs on $S$.

			By Remark \ref{Rem:admissiblecollection} it remains to show that these arcs do not enclose a subsurface  without stops. 
			We show this by using a similar argument to the one in the proof of \cite[Lemma 3.6]{APS19} for the ungraded case.  More precisely, suppose for contradiction that there are arcs among $\{\ell_i\}_{i=1}^n$ which do enclose a subsurface $S'$ without stops. Without loss of generality, we may assume that these arcs are $\ell_{1},\dotsc, \ell_{s}$.  Assume that $S'$ is of genus $g'$ with $b'=b_1'+b_2'$ boundary components, where $b_1'$ is the number of boundaries formed by these arcs and $b_2'$  represents the number of non-stopped boundaries of $S'$ that originally belong to $S$.
			Let us re-index these arcs in such a way that the $j$-th ($1\leq j \leq b_1'$) boundary component of $S'$ consists of the ordered arcs $\ell_{j,1},\cdots, \ell_{j, s_{j}}$   and let $\zd_{j}$ be the (smoothing) concatenation of the $\ell_{j,i}$, for $1\le i\le s_{j}$.  
			
			Let $a_{j, i}$ be the integer such that $\Hom_{\per(A)}(\ell_{j,i},\ell_{j,i+1}[a_{j,i}])\not=0$,  for $1\le j\le b'_1$  and $1\le i\le s_{j}$, where we set $\ell_{j, s_j+1}=\ell_{j, 1}$. We claim that at least one of the $a_{j,i}$'s must be strictly positive which contradicts the assumption that $P$ is a pre-silting object. Hence, the arcs in $\{l_i\}_{i=1}^n$ do not enclose a subsurface  without stops and they form an admissible collection.
			
			To prove the claim, by 
			\cite[Lemma 3.19]{OZ}, for $1\leq j \leq b_1'$ we have 
			\[ w_\eta(\delta_{j})=s_{j}-\sum_{i=1}^{s_{j}}a_{j,i}.\] 
			On the other hand,  by the Poincar\'e-Hopf index formula we have 
			\[\sum_{j=1}^{b'}w_\eta(\delta_{j})=4-4g'-2b'. \]
			Combining the above two equalities we obtain 
			\begin{align}\label{align:equalityaij}
				\sum_{j=1}^{b_1'}\sum_{i=1}^{s_{j}}a_{j, i}=\sum_{j=1}^{b_1'}s_{j}+4g'+2b'-4+\sum_{l=1}^{b_2'} w_\eta(\delta_{l+b_1'}).  
			\end{align}
			where the $w_\eta(\delta_{l+b_1'})$ are the winding numbers of the boundary components on $S'$ which come from non-stopped boundary components of $S$. By our assumption that there exists a silting object, it follows by Lemma \ref{Lem:siltingwinding} that $w_\eta(\delta_{l+b_1'})\geq 0$ for $1\leq l \leq b_2'$. Note that $b_1'> 0$ and  $s_j > 0$ for $1\leq j \leq b_1'$. 
			
			If $g'\ge 1$ or if $g'=0$ and $b'\ge 2$, then from \eqref{align:equalityaij} we have 
			$$\sum_{j=1}^{b_1'}\sum_{i=1}^{s_{j}}a_{j, i}>0$$ and thus the claim holds.  If  $g'=0$ and $b'=b_1'+b_2'=1$, then $b_1' = 1$ and $b_2'=0$. It follows that the equality \eqref{align:equalityaij} becomes  $$\sum_{i=1}^{s_{1}}a_{1,i}=s_{1} -2.$$ 
			Note that in this case $s_{1}$ cannot be $1$ or $2$, since otherwise  $S'$ would be a contractible  monogon or a digon given by two homotopic arcs, respectively. This yeilds $s_1>2$. Therefore,  $\sum_{i=1}^{s_{1}}a_{1,i}>0$ and thus the claim  holds in this case.  This shows $\{\ell_i\}_{i=1}^n$ forms an admissible collection.

			\emph{Step 2:} We show  $\{\ell_i\}_{i=1}^n$ forms an admissible dissection.
			
			If it is not the case, one can complete it to an admissible dissection since by Step 1, $\{\ell_i\}_i$ forms an admissible collection. Denote by $A'$ the associated graded gentle algebra  whose grading is induced by $\eta$. Then by \cite{HKK17}, there is a triangle equivalence $F:\per(A)\xra{\simeq} \per(A')$, and moreover, $F(P)\cong e'A'$ where $e'$ is the sum of the idempotents of $A'$ corresponding to $\{\ell_i\}_{i=1}^n$. Since $\thick(P)=\per(A)$ it follows that $\thick(e'A') = \per(A')$ and thus $e'=1_{A'}$. The latter means that $\{\ell_i\}_{i=1}^n$ is already an admissible dissection on $S$. 
		\end{proof}

		\begin{Rem}\label{Rem:siltingdissection}
			\begin{enumerate}[\rm(1)]
				\item Assume that $A$ is a homologically smooth and proper gentle algebra. Then any pre-silting object in $\per(A)$ corresponds to an admissible collection by the first step of the proof of Proposition  \ref{Prop:behaviorsiltingobjects}. Note that in this case, there is no non-stopped boundary components and thus the last summand in \eqref{align:equalityaij} absents (so Lemma \ref{Lem:siltingwinding} is not needed). But in general, if there is a non-stopped boundary component whose winding number is negative, then pre-silting objects may not give admissible collections.
				\item Proposition \ref{Prop:behaviorsiltingobjects} shows that any silting object $P$ in $\per(A)$ gives a formal generator in the form of a full formal arc system in  the partially wrapped Fukaya category associated to the graded surface with stops $(S, M, \eta)$ as defined in \cite{HKK17}. Namely, the dg endomorphism algebra of $P$ is quasi-isomorphic to the non-positively graded gentle algebra corresponding to the admissible dissection given by $P$. 
			\end{enumerate}
		\end{Rem}

		\section{Existence of silting objects in $\per(A)$}\label{Sec:silting}
		The aim of this section is to provide a complete answer to the existence of silting objects in $\per(A)$ for any homologically smooth graded gentle algebra $A$.

		In general, it is not easy to check whether $\per(A)$ has a silting object or not. But in case $A$ is a homologically smooth graded gentle algebra, we give a complete answer in this section by using the surface model.
		Recall from Section \ref{section:formA^{g}} the graded gentle algebras of the form $(g;1;0)$ with winding numbers $a_i, b_i$.
		
		The main  result in this section is the following.

		\begin{Thm}\label{Thm:existence_of_silting}
			Let $A=\Bbbk Q/I$ be a homologically smooth graded gentle algebra.  Then $\per(A)$ admits silting objects if and only if the following two conditions hold.
			\begin{enumerate}[\rm(1)]
				\item If the set $C(A)$ of cycles without relations  is nonempty, then  for any  $p\in C(A)$, we have $|p|\le 0$. \label{conditiona'}
				
				\item As a graded algebra, $A$ is not isomorphic to a graded gentle algebra given by the quiver
				\begin{align*}
					\xymatrix{ & 1 \ar@<1pc>[r]^-{\za} \ar@<-1pc>[r]^-{\zg}& 2 \ar[l]_-{\zb}} 
				\end{align*}
				with relations $\{\za\zb, \zb\zg  \}$ and satisfying  $|\za|+|\zb|=1$ and $|\zb|+|\zg|=1$.  \label{conditionb'} 
			\end{enumerate}
		\end{Thm}

		Theorem \ref{Thm:existence_of_silting} follows from Theorem \ref{Cor:siltingexistence}
		and Propositions \ref{Prop:no_silting} and \ref{Prop:existence_of_siltingfornonproper} below. (The contrapositive of) Theorem \ref{Thm:existence_of_silting} can be re-stated in terms of partially wrapped Fukaya categories as follows.
		\begin{Cor} \label{Cor:siltinginFukaya}
			Let ${\cal W} (S,M, \eta)$ be the partially wrapped Fukaya category of the graded surface with stops $(S,M, \eta)$.  Then  ${\cal W} (S,M, \eta)$  does not admit silting objects if and only if one of the following holds. 
			\begin{enumerate}[\rm(a)]
				\item There exists a non-stopped boundary component $\partial_iS$ (i.e.\ $\partial_i S \cap M = \emptyset$)  whose winding number is negative. 
				\item $S$ is the torus with one boundary component and one stop (i.e.\ $\# M =1$) such that the winding number of each non-separating simple closed curve is zero. 
			\end{enumerate}
		\end{Cor}
		
		Recall that $C(A)=\emptyset$ if and only if $A$ is proper (see Remark \ref{rem:nonproper}), and in this case, $\per(A)$ has silting objects if and only if Condition \eqref{conditionb'} in Theorem \ref{Thm:existence_of_silting} holds.
		As a direct consequence, we have the following result for homologically smooth and proper graded gentle algebras.
		
		\begin{Cor}\label{Cor:existence_of_silting}
			Let $A=\Bbbk Q/I$ be a homologically smooth and proper graded gentle algebra and let $(S,M,\eta)$ be the corresponding graded surface model of $A$. Then the following are equivalent.
			\begin{enumerate}[\rm(i)]
				\item $\per(A)$ has no (non-zero pre-) silting object.
				\item $A$ is isomorphic, as a graded algebra,  to an algebra of the form $(1;1;0)$ with $a_{1}=0=b_{1}$.
				\item $S$ is the torus with one boundary and one stop on that boundary such that any non-separating simple closed curve on $S$ has winding number zero.
			\end{enumerate}
		\end{Cor}
		
		\begin{Ex}\label{Ex:torustwostops}
			Let $A$ be the graded gentle algebra given by the quiver 
			\[\xymatrix{
				1 \ar@<1pc>[r]^-{\za} \ar@<-1pc>[r]^-{\zg} & 2 \ar[l]_{\zb} \ar[r]^\theta & 3
			}\]
			with relations $I=\langle \za\zb,\zb\zg,\zg \theta\rangle$ and gradings $|\za|=|\zb|=|\theta|=0$, $|\gamma|=1$. Note that the surface model of $A$ is a torus with one boundary component such that $\#M =2$ (see Figure \ref{Fig:StandardSurfaceModel}).  It follows from Corollary \ref{Cor:existence_of_silting} that $\per(A)$ has a silting object, although $\w{A}(\eta)=0$ by \eqref{align:wAA1}.
		\end{Ex}

		\begin{Rem}
			We have seen in Proposition \ref{prop:derivedinvariantstandard} that all the algebras of the form $(1;1;0)$ with $a_1=0=b_1$ are derived equivalent. Since the existence of silting objects is invariant under derived equivalence, Corollary \ref{Cor:existence_of_silting} yields that among the derived equivalence classes of homologically smooth and proper graded gentle algebras, there is only one class, namely the one corresponding to algebras of the form $(1;1;0)$ with $a_1=0=b_1$, whose perfect derived category does not admit silting objects.  
		\end{Rem}

		\subsection{The proof of Theorem \ref{Thm:existence_of_silting}}\label{subsection:prooftheorm42}
		
		Let $A$ be a graded gentle algebra with graded surface model $(S, M,\eta, \zD)$. 
		Recall from Assumption  \ref{asumption:silting} that we have considered the following two assumptions on $(S, M, \eta)$, under which there exist silting dissections on $\mathcal W(S, M, \eta)$:
		\begin{enumerate}[\rm (1)]
			\item Assume that the winding number of each non-stopped boundary component (if it exists) of $S$ is non-negative.
			\item Assume that $\widetilde{A}(\eta) \neq 0$ if  $S$ is a torus with exactly one boundary component and one stop.
		\end{enumerate}
		Condition \eqref{conditiona'} (resp.\ \eqref{conditionb'})  in Theorem \ref{Thm:existence_of_silting} is equivalent to the first (resp.\ second) assumption. The former equivalence is due to  Remark \ref{rem:nonproper}, which states that the set of non-stopped boundaries bijectively corresponds to the set $C(A)$ and the winding number of the non-stopped boundary component  corresponding to $p\in C(A)$ is $-|p|$. The latter equivalence is explained as follows: if $S$ is a torus with exactly one boundary and one stop, by \eqref{align:widetildeAinvariant}, we have
		\[\widetilde{A}(\eta)=\rm{gcd}(|\za|+|\zb|-1,|\zb|+|\zg|-1)\not=0. \]

		Therefore,  the `if' part of Theorem \ref{Thm:existence_of_silting} follows directly from Theorem \ref{Cor:siltingexistence}, because in this case, there is a dissection $\zD'$ whose corresponding graded gentle algebra $A(\zD')$ is non-positive graded. The image of $A(\zD')$ under the equivalence $\per(A(\zD'))\simeq \per(A)$ gives a silting object in $\per(A)$. In the remainder of the section we prove the \lq only if' part of Theorem \ref{Thm:existence_of_silting}. We consider the two cases: proper and non-proper.
		
		\subsubsection{The  proper case}
		\label{subsubsection:smoothproper}
		The following result completes the proof of Theorem \ref{Thm:existence_of_silting} for the proper case.

		\begin{Prop}\label{Prop:no_silting}
			Let $A$ be the algebra of the form $(1;1;0)$ with $a_1=0=b_1$. Then $\per(A)$ has no nonzero pre-silting objects. In particular, $\per(A)$ has   no silting objects.
		\end{Prop}
		
		\begin{proof}
			Assume $\per(A)$ has a pre-silting object $P$, then by Remark \ref{Rem:siltingdissection}(1), $P$ gives an admissible collection in  the graded surface model $(S,M,\eta,\zD)$ of $A$, which can be completed into an admissible dissection $\zD'=\{1',2'\}$.  Assume that $1'$ gives rise to a direct summand $P'$ of $P$. By Lemma \ref{Lem:A^{(1)}}, the graded gentle algebra $A(\zD')$ is of the form
			\[ \xymatrix{ & 1' \ar@<1pc>[r]^-{\za'} \ar@<-1pc>[r]^-{\zg'}& 2' \ar[l]_-{\zb'}} \]
			with relations $\za'\zb'=0=\zb'\zg'$,  and moreover, 
			\[\widetilde{A}(\eta)=\rm{gcd}(|\za'|+|\zb'|-1,|\zb'|+|\zg'|-1). \ \]
			Since \[\h^n(e_{1'}A(\zD')e_{1'})=\Hom_{\per(A')}(e_{1'}A(\zD'),e_{1'}A(\zD')[n])=\Hom_{\per(A)}(P',P'[n])=0\]
			for $n>0$ and the path $\zg'\zb'$ is a non-zero element in $\h^{|\zg'|+|\zb'|}(e_{1'}A(\zD')e_{1'})$, it follows that 
			$|\zg'|+|\zb'|\le 0$ and  thus $\widetilde{A}(\eta)\not=0$, which contradicts to $\widetilde{A}(\eta) =0$. So $\per(A)$ has no pre-silting objects. 
		\end{proof}

		\begin{Rem}
			We note that a direct  calculation  as stated in \cite{CJS}, also  shows  Proposition~\ref{Prop:no_silting} by showing that there always is a positive self-extension for any graded homotopy string or band complex in $\per(A)$ for $A=(1;1;0)$ with $a_1=0=b_1$. A recent preprint  \cite{LZ23} also announces a  proof using  yet a different approach.  \end{Rem}

		\subsubsection{The non-proper case} In this subsection, we show the following result, which completes the proof of Theorem \ref{Thm:existence_of_silting} in the non-proper case. 
		\begin{Prop}\label{Prop:existence_of_siltingfornonproper}
			Let $A=\Bbbk Q/I$ be a homologically smooth graded gentle algebra which is  non-proper (that is, $C(A)\not=\emptyset$). 
			Assume  $\per(A)$ admits a silting object. Then $|p|\le 0$ for each $p\in C(A)$.
		\end{Prop}
		\begin{proof}
			By Remark \ref{rem:nonproper}, any $p\in C(A)$  corresponds to a non-stopped boundary component, whose winding number is exactly $-|p|$. Then the assertion follows directly from Lemma \ref{Lem:siltingwinding}.
		\end{proof}

		\begin{Rem}
			We may also show Proposition \ref{Prop:existence_of_siltingfornonproper} using Hochschild homology. If $\per(A)$ admits a silting object $T$ then the dg endomorphism algebra $B$ of $T$ is derived equivalent to $A$. It follows from \cite{Keller06} that $\mathrm{HH}_*(A, A) \simeq \mathrm{HH}_*(B, B)$. 
			
			On one hand, since the cohomology of $B$ is concentrated in non-positive degree we know that $\mathrm{HH}_{i}(B, B)  =0$ for $i<0,$ see e.g.\ \cite{Keller06}. On the other hand, from \cite[Chapter 4]{CSSS}  each cyclic path $p \in C(A)$ gives a nonzero cocycle $[p] \in \mathrm{HH}_{-|p|}(A, A)$. In particular, if there is $p \in C(A)$ with $|p|> 0$ then $\mathrm{HH}_{-|p|}(A, A) \neq 0$. This is a contradiction. 
		\end{Rem}
		
		In the following two sections, we will provide two applications (Theorems \ref{Thm:derivedeq} and \ref{Thm:partialsiltingmainresult}) of our main Theorem \ref{Thm:existence_of_silting}.

		\section{A complete derived invariant for graded gentle algebras} \label{Section:derivedinvariant}
		
		In this section we give a   first application of Theorem \ref{Thm:existence_of_silting}. Namely, we  give  a complete derived invariant for graded gentle algebras.
		This invariant is given in terms of their geometric surface models. By the equivalences in \cite{HKK17} and \cite{LP20} this  then also gives a complete invariant of triangle equivalences for partially wrapped Fukaya categories of graded surfaces with stops. 
		
		More precisely, we have the following result.

		\begin{Thm}\label{Thm:derivedeq}
			Let $A$ and $B$ be two homologically smooth graded gentle algebras with associated surface models  $(S_{A}, M_{A}, \eta_{A},\zD_{A})$ and
			$(S_{B}, M_{B}, \eta_{B}, \zD_{B})$,  respectively.

			Then $\D(A)$ is triangle equivalent to $\D(B)$ if and only if there exists an orientation preserving homeomorphism $\varphi: S_{A}\ra S_{B}$ such that $\varphi(M_{A})=M_{B}$  and $\varphi_{*}(\eta_{A})$ is homotopic to $\eta_{B}$.
		\end{Thm}
		
		The \lq if' part was proved in \cite{LP20}, while the `only if' implication was proposed as a conjecture in 
		\cite[Remark 3.19]{LP20}. 
		To establish the `only if' part, we examine two distinct cases based on the presence or absence of silting objects; refer to Theorem \ref{Thm:existence_of_silting}.

		\subsection{The case where silting objects exist}
		\begin{Prop}\label{prop:properlp0}
			The \lq only if' part of Theorem \ref{Thm:derivedeq} holds if $\per(A)$ admits silting objects. 
		\end{Prop}
		\begin{proof}
			Assume that $F:\D(A) \ra \D(B)$ is a triangle equivalence. Assume that $\per(A)$ admits a silting object $P$. We may assume that $P$ is basic. Then $F(P)$ is also a basic silting object of $\per(B)$. By Proposition~\ref{Prop:behaviorsiltingobjects},
			$P$ and $F(P)$ give rise to admissible dissections $\zD_{P}$ of  $(S_{A}, M_{A}, \eta_A)$ and  $\zD_{F(P)}$ of $(S_{B}, M_{B}, \eta_B)$, respectively. Let $A'$ (resp.\ $B'$) be the graded gentle algebra corresponding to the dissection $\zD_P$ (resp.\ $\zD_{F(P)}$).  Then  we have algebra isomorphisms 
			\[A' \cong \bop_{i\in\Z}\Hom_{\per(A)}(P, P[i]) \cong \bop_{i\in\Z}\Hom_{\per(B)}(F(P), F(P)[i])\cong B',\]
			where the first and third isomorphisms are due to Remark \ref{Rem:siltingdissection}(2), and the second one is induced by the triangle equivalence $F$.

			Note that $(S_A, M_A, \eta_A, \Delta_P)$ (resp.\ $(S_B, M_B, \eta_B, \Delta_{F(P)})$) is a surface model of $A'$ (resp.\ $B'$). Since $A'$ is isomorphic to $B'$, by Proposition \ref{Prop:models_same_algebra} there exists an orientation preserving homeomorphism $\varphi: S_{A}\ra S_{B}$ such that $\varphi(M_{A})=M_{B}$, $\varphi(\zD_{P})=\zD_{F(P)}$,  and $\varphi_{*}(\eta_{A})$ is homotopic to $\eta_{B}$.
		\end{proof}

		\subsection{The case where silting objects do not exist}
		In this subsection, 
		we show the \lq only if' part of Theorem \ref{Thm:derivedeq} for the case $\per (A)$ has no silting objects. Note that in this case, $\per(B)$ also has no silting objects. Assume that $S_A$ and $S_B$ have $v$ and $v'$ non-stopped boundary components, $u+1$ and $u'+1$ stopped boundary components,  respectively. For the convenience of the readers, we present our argument in the following two propositions.   Recall that $A$ is proper if and only if $v=0$.
		
		\begin{Prop}\label{prop:properlp}
			The \lq only if' part of Theorem \ref{Thm:derivedeq} holds if $A$ is proper and $\per(A)$ has no silting objects.
		\end{Prop} 
		
		\begin{proof}
			Since $A$ and $B$ are proper, by Theorem \ref{Thm:existence_of_silting}, $A$ and $B$ must be of the form $(1;1;0)$ with $a_A=b_A = a_B=b_B=0$. Let $(S, M, \eta, \zD)$ and $(S', M', \eta',\zD')$ be the standard surface models of $A$ and $B$, respectively. Then $S'=S, M'=M, \zD' = \zD$. 
			
			Since $\w{\mathcal A}(\eta)=0=\w{\mathcal A}(\eta')$, 
			the winding number of any non-separating simple closed curve is zero (see Remark \ref{Remark:linefieldinvariant}(1)). Note that there is only one boundary component on $S$ (and $S'$) and its winding number is $-2$. 
			Then  $\eta$ is homotopic to $\eta'$ (see for example  \cite[Proposition 1.4]{APS19} or  \cite{C72}). It follows that the identity map $\rm{id}\colon S\to S'$ gives a homeomorphism between these two surface models. By Proposition \ref{Prop:models_same_algebra},  there exists an orientation preserving homeomorphism $\varphi^{A}: S_{A}\ra S$ (resp.\
			$\varphi^{B}: S'\ra S_{B}$) such that $\varphi^{A}(M_{A})=M$ (resp.\ $\varphi^{B}(M')=M_{B}$) and $\varphi^{A}_{*}(\eta_{A})$ is homotopic to $\eta$ (resp.\ $\varphi^{B}_{*}(\eta')$ is homotopic to $\eta_{B}$).
			The  homeomorphism $\varphi=\varphi^{B}\circ \rm{id} \circ \varphi^{A}$ satisfies the condition we want.
		\end{proof}
		
		\begin{Prop}\label{prop:properlp2}
			The \lq only if' part of Theorem \ref{Thm:derivedeq} holds if $A$ is non-proper and $\per(A)$ has no silting objects.
		\end{Prop} 
		\begin{proof}
			We apply induction on the number $v$ of non-stopped boundary components of $S_A$.
			If $v=0$, then the assertion holds by Propositions \ref{prop:properlp0} and  \ref{prop:properlp}.
			
			Now assume that the assertion holds for $v=k-1\ge 0$. We show that it is also true for $v=k$ by comparing the geometric invariants  considered in \cite{LP20}.
			Up to derived equivalence, we may assume that $A=kQ/I$ is of standard form $(g; m_0, \dotsc, m_u; v=k)$ (see Definition \ref{Def:standardform}), and $\zD_A$ is the standard dissection. 
			Denote by $\zg$ the arc connecting the boundary component $\partial_0 S_A$ and the non-stopped boundary component $\partial_{u+v}S_A$ as illustrated in Figure \ref{Fig:StandardSurfaceModel}, and denote by $e\in A$ the corresponding idempotent.  
			Let  $\zg'$ be the arc in $S_B$ which is the image of $\zg$ under the equivalences 
			\[\mathcal W(S_A, M_A,\eta_A)\simeq \per(A)\simeq \per(B) \simeq \mathcal W(S_B,M_B,\eta_B).\]
			Then $\End(\zg')=\End(\zg)=\Bbbk[x]$, and by Corollary \ref{Cor:kxcurve}, $\zg'$ connects a stopped boundary component and a non-stopped boundary component. We may extend $\zg'$ to an admissible dissection $\zD'$ of $S_B$, and without loss of generality, we may assume that 
			$\zD'$ is of standard form as in Figure \ref{Fig:StandardSurfaceModel} and $\zg'$ is exactly the arc connecting $\partial_0 S_B$ and $\partial_{u'+v'} S_B$.  Note that by Proposition \ref{Prop:kxcurve}, 
			\begin{equation}\label{equ:u+v=u'+v'} 
				w_{\eta_A}(\partial_{u+v}S_A)=w_{\eta_B}(\partial_{u'+v'}S_B)=-|x|.
			\end{equation}

			We denote by $B'=kQ'/I'$ the standard graded gentle algebra given by $\zD'$, and denote by $e'\in B$ the corresponding idempotent of $\zg'$. Then it follows from \cite{HKK17} that $\per(B)$ is triangle equivalent to $\per(B')$ and thus by silting reduction there are triangle equivalences  
			\[\per(A_e)\simeq \per(A)/\thick(eA) \simeq \per(B')/\thick(e'B')\simeq \per(B'_{e'}).\]
			On the other hand, note that the surface model $(S_{A_e}, M_{A_e}, \eta_{A_e})$ (resp. $(S_{B'_{e'}}, M_{B'_{e'}}, \eta_{B'_{e'}})$) of $A_e$ (resp. $B'_{e'}$) has $v-1$ (resp $v'-1$) non-stopped boundary components, and $u+1$ (resp. $u'+1$) stopped boundary components. By the induction hypothesis, the two surface models $(S_{A_e}, M_{A_e}, \eta_{A_e})$ and $(S_{B'_{e'}}, M_{B'_{e'}},\eta_{B'_{e'}})$ are isomorphic. So we have $v=v', u=u'$. 
			
			Denote the sequences \eqref{align:allthewindingnumbers} of winding numbers associated to $S_A, S_{A_e}, S_B, S_{B'_{e'}}$ by 
			\begin{align*}
				W_{A} &= \{ c^A_0, \dotsc, c^A_{b-1}, a^A_1, \dotsc, a^A_g, b^A_1, \dotsc, b^A_g\}\\ 
				W_{A_e} & = \{ c_0^{A_e}, \dotsc, c_{b-2}^{A_e}, a_1^{A_e}, \dotsc, a_g^{A_e}, b_1^{A_e}, \dotsc, b_g^{A_e}\}\\ 
				W_{B'} & = \{ c^{B'}_0, \dotsc, c^{B'}_{b-1}, a^{B'}_1, \dotsc, a^{B'}_g, b^{B'}_1, \dotsc, b^{B'}_g\}\\ 
				W_{B'_{e'}} & = \{ c^{B'_{e'}}_0, \dotsc, c^{B'_{e'}}_{b-2}, a^{B'_{e'}}_1, \dotsc, a^{B'_{e'}}_g, b^{B'_{e'}}_1, \dotsc, b^{B'_{e'}}_g\}.
			\end{align*}
			Since the two surface models $S_{A_e}$ and $S_{B'_{e'}}$ are isomorphic, it follows that for $0\leq i \leq b-2$
			\begin{align}\label{align:cutting74}
				c_i^{A_e} = c_i^{B'_{e'}}.
			\end{align}
			By Proposition \ref{prop:cutarcnonstop} we have 
			\begin{equation}\label{equ:A_eA2}
				c_{0}^{A_e}=c_{0}^{A}+c_{b-1}^{A}+2, \quad \quad c_{0}^{B'_{e'}} = c_{0}^{B'}+c_{b-1}^{B'} + 2,
			\end{equation}
			and for $1\leq i \leq b-2$
			\begin{equation}\label{equ:A_eA1}
				c_i^{A_e}= c_i^{A}, \quad \quad c_i^{B'_{e'}}= c_i^{B'}.
			\end{equation}
			Moreover, it follows from Proposition \ref{prop:cutarcnonstop} that
			\begin{equation}\label{equ:a_i=a_i}
				a_i^A=a_i^{A_e},  \quad \quad b_i^A=b_i^{A_e}, \quad \quad a_i^{B'} = a_i^{B'_{e'}}, \quad \quad b_i^{B'} = b_i^{B'_{e'}}
			\end{equation}
			for $1\le i\le g$.

			Note that by \eqref{equ:u+v=u'+v'}, we have $c_{b-1}^A=w_{\eta_A}(\partial_{u+v}S_A)=w_{\eta_B}(\partial_{u'+v'}S_B)=c_{b-1}^{B'}$. Combining \eqref{align:cutting74},  \eqref{equ:A_eA2} and \eqref{equ:A_eA1}, we have
			\begin{equation}\label{equation:ciAB} c_i^A= c_i^{B'} 
			\end{equation} for all $0\leq i \leq b-1$
			and
			\[\#(M_{A}\cap\partial_{j} S_{A})= \#(M_{B}\cap\partial_{j} S_{B}),\]
			for and $0\leq j \leq b-1$. To compare further invariants, we need consider the following two cases. 
			
			\noindent
			\emph{Case 1:} $g=1$. We have
			\begin{align*} 
				\w{\cal A}(\eta_A)&=\gcd(a^A_1,b^A_1,c^A_0+2,c^A_1+2, \dots, c^A_{b-2}+2, c^A_{b-1}+2) \\
				&= \gcd(\gcd(a^A_1,b^A_1,c^A_0+c_{b-1}^A+4,c^A_1+2, \dots, c^A_{b-2}+2), c^A_{b-1}+2) \\
				&= \gcd(\gcd(a^{A_e}_1,b^{A_e}_1,c_0^{A_e}+2,c^{A_e}_1+2, \dots, c^{A_e}_{b-2}+2), c^A_{b-1}+2) \\
				&= \gcd(\w{\mathcal A}(\eta_{A_e}),c^A_{b-1}+2),
			\end{align*} 
			where the second equality follows from the gcd property and the third uses the equalities \eqref{equ:A_eA2},  \eqref{equ:A_eA1} and \eqref{equ:a_i=a_i} involving $A$ and $A_e$.
			Similarly, we have $\w{\mathcal A}(\eta_B)=\gcd(\w{\mathcal A}(\eta_{B'_{e'}}),c_{b-1}^{B'}+2)$. Since $\w{\mathcal A}(\eta_{A_e})=\w{\mathcal A}(\eta_{B'_{e'}})$ by the induction assumption, we have $\w{\mathcal A}(\eta_A)=\w{\mathcal A}(\eta_B)$.
			
			\noindent
			\emph{Case 2:} $g>1.$ We show the following invariants (see Section \ref{subsection:combinatorialinvariants}) coincide.
			\begin{enumerate}[\rm(a)]
				\item[$\bullet$] 
				We claim
				$\sigma(\eta_A)=\sigma(\eta_{B'})$ (see \eqref{align:sigmainvariant} for the definition of the function $\sigma$). 
				
				Indeed, assume $\sigma(\eta_A)=0$. Then all $a_i^A, b_i^A$ and $c_j^A (=c_j^{B'}$ by \eqref{equation:ciAB}) are even, and thus  $\sigma(\eta_{A_e})=0$ by  \eqref{equ:A_eA2},  \eqref{equ:A_eA1} and \eqref{equ:a_i=a_i}. It follows that  
				$\sigma(\eta_{B'_{e'}})=\sigma(\eta_{A_e}) =0$ where the first equality holds since the surface models of $B'_{e'}$ and $A_e$ are isomorphic by the induction hypothesis. So $a_i^{B'}$ and $b_i^{B'}$ are all even for all $1\le i\le g$ and we have $\sigma(\eta_{B'})=0$.
				
				Assume $\sigma(\eta_A)=1$. We consider two cases. 
				If one of $c_j^A$ is odd,  so is $c_j^{B'}$. Then, $\sigma(\eta_{B'})=1$. If all $c_j^A$ (=$c_j^{B'}$) are even, and one of $a_i^A, b_i^A$ is odd, then by $a_i^A = a_i^{A_e}, b_i^A= b_i^{A_e}$ we obtain that $\sigma(\eta_{B'_{e'}})=\sigma(\eta_{A_e})=1$. Therefore, one of $a_i^{B'_{e'}}, b_i^{B'_{e'}}$ must be odd, which implies that $\sigma(\eta_{B'})=1$ (since $a_i^{B'_{e'}}=a_i^{B'}, b_i^{B'_{e'}}=b_i^{B'}$). This proves the claim.
				
				\item[$\bullet$] Assume $\sigma(\eta_A)=\sigma(\eta_{B'})=0$. If there exists $0\le j\le b-1$ such that $c_j^A= 0 \mod 4$, then $c_j^{B'}= 0 \mod 4$ by \eqref{equation:ciAB}. So all the invariants coincide in this case. 
				
				\item[$\bullet$] Now assume $\sigma(\eta_A)=\sigma(\eta_{B'})=0$ and $c_j^A= 2=c_j^{B'} \mod 4$ for all $0\le j\le b-1$. We claim that  $\cal A(\eta_A)=\cal A(\eta_{B'})$.
				Indeed, note that by \eqref{equ:A_eA2} and \eqref{equ:A_eA1}, the Arf invariant $\cal A(\eta_{A_e})$ and $\cal A(\eta_{B'_{e'}})$ are also well-defined, and moreover, they are equal by the induction hypothesis. 
				Since $\mathcal A(\eta) := 
				\sum_{i=1}^g (\frac{1}{2}a_i +1) (\frac{1}{2}b_i+1) \ \mod 2$, which only depends on $a_i$ and $b_i$, then by \eqref{equ:a_i=a_i}
				\[\cal A(\eta_A)=\cal A(\eta_{A_e})= \cal A(\eta_{B'_{e'}})=\cal A(\eta_{B'}).\]
			\end{enumerate}
			The above shows that the invariants of $S_A$ and $S_B$ coincide. By \cite[Corollary 1.10]{LP20}, the assertion holds.
		\end{proof}

		\begin{proof}[The proof of Theorem \ref{Thm:derivedeq}]
			The \lq if' part follows from \cite[Corollary 7.4]{LP20}. The \lq only if' part follows from Propositions \ref{prop:properlp0},  \ref{prop:properlp} and \ref{prop:properlp2} above.
		\end{proof}

		As an application of Theorem~\ref{Thm:derivedeq} we give a complete derived invariant for homologically smooth and proper graded gentle algebras. Recall  the invariants $\sigma(\eta), \w{\mathcal  A}(\eta)$ and $\mathcal A(\eta)$ from Subsection \ref{subsection:combinatorialinvariants}. Then we have the following result.  
		
		\begin{Cor}\label{Cor:complete}
			Let $A$ and $B$ be two graded gentle algebras with associated surface models $(S_{A}, M_{A},\eta_{A},\zD_{A})$ and $(S_{B}, M_{B}, \eta_{B},\zD_{B})$,  respectively. Then $A$ and $B$ are derived equivalent if and only if $S_A$ and $S_B$ have the same number $b$ of boundary components and there exists a numbering of these boundary components such that for each boundary component one has
			\[\#(M_{A}\cap\partial_{j} S_{A})= \#(M_{B}\cap\partial_{j} S_{B}),\]
			\[w_{\eta_{A}}(\partial_{j} S_{A})=w_{\eta_{B}}(\partial_{j} S_{B}),  \]
			and in addition,
			\begin{itemize} 
				\item if $g(S_{A})=g(S_{B})=1$ then $\w{\mathcal A}(\eta_{A})=\w{\mathcal A}(\eta_{B})$; 
				\item if $g(S_{A})=g(S_{B}) >1$ then $\zs(\eta_{A})=\zs(\eta_{B})$ and $\mathcal A(\eta_{A})=\mathcal A (\eta_{B})$ whenever the latter two invariants are defined.
			\end{itemize}
		\end{Cor}
		\begin{proof}
			The proof follows from Theorem \ref{Thm:derivedeq} and \cite[Corollary 1.10]{LP20} (see also Theorem \ref{Prop:homotopic_linefield} in this paper).
		\end{proof}

		As an application of the above corollary, we  give a complete classification of derived equivalence classes  for the graded gentle algebras of the form $(g; m_0, \dotsc, m_u; v)$. Recall that a sufficient condition is given in Proposition \ref{prop:derivedinvariantstandard}.

		\begin{Cor}\label{Prop:classificationA1}Let $A$ and $A'$ be two graded gentle algebras of the form $(g; m_0, \dotsc, m_u;v)$ with numbers $a_i,b_i,c_j$ and $a_i',b_i',c_j'$ defined in the paragraph following Example \ref{Ex:windingnumberg=2}. Then $A$ and $A'$ are derived equivalent if and only if we have
			$c_j=c_j'$ for each $0\leq j\leq b-1$ and in addition 
			\begin{enumerate}[\rm (a)]
				\item if $g=1$ then $\w{\mathcal A}(\eta) = \w{\mathcal A}(\eta')$.
				\item if $g>1$ then one of the following conditions holds 
				\begin{itemize}
					\item 
					$\sigma(\eta)=1=\sigma(\eta')$;
					\item 
					$\sigma(\eta)=0=\sigma(\eta')$ and there exists $0\leq j < b$ such that $c_j= 0 \ \mod 4$;
					\item 
					$\sigma(\eta)=0=\sigma(\eta')$  and $c_j = 2\ \mod 4$ for all $0\leq j < b$,  and 
					\begin{equation*}\sum_{i=1}^g  (\frac{1}{2}a_i 
						+1) (\frac{1}{2}b_i +1) = \sum_{i=1}^g (\frac{1}{2}a'_i 
						+1) (\frac{1}{2}b'_i +1)\  \mod 2.\end{equation*}
				\end{itemize}
			\end{enumerate}
		\end{Cor}
		
		\begin{proof}
			This directly follows from Corollary~\ref{Cor:complete} and Proposition \ref{prop:derivedinvariantstandard}.
		\end{proof}
		
		\begin{Rem}
			Corollary \ref{Prop:classificationA1} (a) implies that a graded gentle algebra of the standard form $(1;1;0)$ with winding numbers $a, b$ is derived equivalent to another graded gentle algebra of the same form with $a', b'$ if and only if $\gcd(a_1,b_1)=\gcd(a'_1,b'_1)$.  In this case,  if $a_1=0=b_1$, then $a'_1=0=b'_1$.
		\end{Rem}

		\section{Pre-silting and partial silting objects in $\per(A)$}
		In this section, for any homologically smooth and proper graded gentle algebra $A$ we give a necessary and sufficient condition under which  all pre-silting objects in $\per(A)$ are partial silting. As a result, we obtain a whole family of examples that have pre-silting objects which are not partial silting in the setting of finite dimensional (ungraded) algebras. 
		
		\subsection{Partial silting objects}
		Recall that a pre-silting object $P$ in $\per(A)$ is \emph{partial silting} if there exists $Q$ such that $P\oplus Q$ is a silting object in $\per(A)$. We now state the main result of this section.
		
		\begin{Thm}\label{Thm:partialsiltingmainresult}
			Let $A$ be a homologically smooth and proper graded gentle algebra. Let $(S, M, \eta, \Delta)$ be a surface model of $A$. Then the following are equivalent.
			\begin{enumerate}[\rm(i)]
				\item Every pre-silting object in $\per(A)$ is partial silting.
				\item Either $S$ is of genus $0$,  or $S$ is of genus $1$ with  
				\begin{equation}\label{equ:Anot=gcdofboudary}
					\w{\A}(\eta)\not=\gcd\{w_{\eta}(\partial_{0}S)+2, \dots, w_{\eta}(\partial_{b-1}S)+2\}. 
				\end{equation}
			\end{enumerate}
		\end{Thm}
		Recall that the invariant $\w{\A}(\eta)$ is defined by 
		\[
		\w{\mathcal A}(\eta)=\gcd\{ w_{\eta}(s), w_{\eta}(t), w_{\eta}(\partial_{0}S)+2, \dots, w_{\eta}(\partial_{b-1}S)+2\}. \]
		We postpone the proof of Theorem \ref{Thm:partialsiltingmainresult} to the end of the section.
		\begin{Rem}\label{Rem:thm5.6explanation}
			\begin{enumerate}[\rm (1)]
				\item
				Since  the zero object is by definition a  pre-silting object, the first statement implies that $\per(A)$ must have a silting object. 
				\item
				The inequality  \eqref{equ:Anot=gcdofboudary} implies that $\w{\A} (\eta) \neq 0$ and $ \gcd\{w_\eta(s), w_\eta(t)\}\neq 0$ for any non-separating simple closed curves $s, t$ which induce a symplectic basis of $\mathrm H_{1}(\bar{S})$ where  $\bar{S}$ is the closed surface obtained from $S$ by filling boundaries. 
			\end{enumerate}
		\end{Rem}

		The following corollary follows from the proof of Theorem \ref{Thm:partialsiltingmainresult}. It answers a question posed to us by M. Kalck and may be used to construct explicit pre-silting objects which are not partial silting.
		\begin{Cor}\label{Cor:prenotpartial}
			Let $A=\Bbbk Q/I$ be a homologically smooth and proper graded gentle algebra. The following are equivalent.
			\begin{enumerate}[\rm(a)]
				\item There is a pre-silting object in $\per(A)$ which is not partial silting. 
				
				\item There exists a  graded gentle algebra $B$ such that there is a triangle equivalence $F:\per(B)\xra{\simeq} \per(A)$ and there is an idempotent $e\in B$ such that a connected component of $B_{e}$ is the standard form $(1;1;0)$ with  $a_1=0=b_1$.  
			\end{enumerate}
			Moreover, if the conditions above are satisfied, the algebra $B$ in $\rm (b)$ can be chosen in such a way that $\per(eBe)$ admits a silting object $P$ and in this case 
			$F(P)$ is a pre-silting object in $\per(A)$ which is not partial silting.
		\end{Cor}
		
		Recall that an algebra is of standard form $(1;1;0)$ with  $a_1=0=b_1$ if and only if it is given by the following quiver  
		\begin{align*}
			\xymatrix{ & 1 \ar@<1pc>[r]^-{\za} \ar@<-1pc>[r]^-{\zg}& 2 \ar[l]_-{\zb}} 
		\end{align*}
		with relations $\{\za\zb, \zb\zg  \}$ and satisfying  $|\za|+|\zb|=1$ and $|\zb|+|\zg|=1$.

		As a corollary of Theorem \ref{Thm:partialsiltingmainresult} we immediately obtain infinitely many finite dimensional algebras having pre-silting objects which are not partial silting.

		\begin{Cor}\label{cor:finitedimensionalgentle}
			Let $A$ be a finite dimensional (ungraded)  gentle algebra, which is homologically smooth. If the genus of the  surface of $A$ is greater than $1$ then there exists a pre-silting object in $\per(A)$ which is not partial silting.
		\end{Cor}

		Note that by Theorem \ref{Thm:partialsiltingmainresult}, there are also finite dimensional ungraded gentle algebras $A$ of genus $1$ such that $\per(A)$ has pre-silting objects which are not partial silting.
		We give two examples  of finite dimensional algebras that have pre-silting objects which are not partial silting. The first example provides an infinite family of such algebras including 
		the one considered in \cite{LZ23}, which corresponds to  the case  $p=q=u=v=1$ in the example below. 
		
		\begin{Ex}\label{Ex:notpartial1}
			Let $A$ be the ungraded  (zero-graded) gentle algebra given by the following quiver $Q$ \[
			\begin{tikzpicture}[scale=1.4]
				\begin{scope}
					\node[circle, inner sep=1pt, minimum size=3pt] (a1) at (0, 0){$1$};
					\node[circle, inner sep=1pt, minimum size=3pt] (a2) at (4, 0){$2$};
					\node[circle, inner sep=1pt, minimum size=3pt] (a3) at (8, 0){$3$};
					\node[circle, inner sep=1pt, minimum size=3pt] (b1) at (1,.4){};
					\node[circle, inner sep=1pt, minimum size=3pt] (b2) at (2,.4){$\dotsb$};
					\node[circle, inner sep=1pt, minimum size=3pt] (b3) at (3,.4){};
					\node[circle, inner sep=1pt, minimum size=3pt] (c1) at (1,-.4){};
					\node[circle, inner sep=1pt, minimum size=3pt] (c2) at (2,-.4){$\dotsb$};
					\node[circle, inner sep=1pt, minimum size=3pt] (c3) at (3,-.4){};
					\node[circle, inner sep=1pt, minimum size=3pt] (d1) at (5,.4){};
					\node[circle, inner sep=1pt, minimum size=3pt] (d2) at (6,.4){$\dotsb$};
					\node[circle, inner sep=1pt, minimum size=3pt] (d3) at (7,.4){};
					\node[circle, inner sep=1pt, minimum size=3pt] (e1) at (5,-.4){};
					\node[circle, inner sep=1pt, minimum size=3pt] (e2) at (6,-.4){$\dotsb$};
					\node[circle, inner sep=1pt, minimum size=3pt] (e3) at (7,-.4){};
					\draw[->]  (a1) -- node[above]{$\alpha_1$}  (b1) ;
					\draw[->]  (b1) -- node[above]{$\alpha_2$}  (b2) ;
					\draw[->]  (b2) -- node[above]{$\alpha_{p-1}$}  (b3) ;
					\draw[->]  (b3) -- node[above]{$\alpha_p$}  (a2) ;
					\draw[->]  (a1) -- node[below]{$\beta_1$}  (c1) ;
					\draw[->]  (c1) -- node[below]{$\beta_2$}  (c2) ;
					\draw[->]  (c2) -- node[below]{$\beta_{q-1}$}  (c3) ;
					\draw[->]  (c3) -- node[below]{$\beta_q$}  (a2) ;
					\draw[->]  (a2) -- node[above]{$\gamma_1$}  (d1) ;
					\draw[->]  (d1) -- node[above]{$\gamma_2$}  (d2) ;
					\draw[->]  (d2) -- node[above]{$\gamma_{u-1}$}  (d3) ;
					\draw[->]  (d3) -- node[above]{$\gamma_u$}  (a3) ;
					\draw[->]  (a2) -- node[below]{$\delta_1$}  (e1) ;
					\draw[->]  (e1) -- node[below]{$\delta_2$}  (e2) ;
					\draw[->]  (e2) -- node[below]{$\delta_{v-1}$}  (e3) ;
					\draw[->]  (e3) -- node[below]{$\delta_v$}  (a3) ;
				\end{scope}
			\end{tikzpicture}
			\]
			with relations $\{\alpha_p\gamma_1,\beta_q\delta_1\}$. Here, $p, q, u, v\geq 1$. By calculating the relevant winding numbers and using  Theorem \ref{Thm:partialsiltingmainresult}, we can easily see that  $\per(A)$ has a pre-silting object which is not partial silting.
			
			Namely,  let $(S, M, \eta, \Delta)$ be the surface model of $A$. Note that $S$ is of genus $1$ with one boundary component and  $(p+q+u+v-2)$  stops, as illustrated in Figure \ref{figure:2polygons}.

			\begin{figure}[ht]
				\begin{tikzpicture}[scale=.8,transform shape]
					\begin{scope}
						\coordinate (d0) at (0:1.5);
						\coordinate (d1) at (15:1.5);
						\coordinate (d2) at (30:1.5);
						\coordinate (d3) at (45:1.5);
						\coordinate (d4) at (60:1.5);
						\coordinate (d5) at (75:1.5);
						\draw[] (1.5,0) arc[start angle = 0, end angle = 90, radius = 1.5];
						\draw[ thick,green] (0,1.5) -- (0,4.5);
						\draw[ thick,green] (1.5,6) -- (4.5,6);
						\draw[thick,green] (6,4.5) -- (6,1.5);
						\draw[thick,green] (4.5,0) -- (1.5,0);
						\begin{scope}[shift={(6, 6)}]
							\coordinate (a0) at (-105:1.5);
							\coordinate (a1) at (-120:1.5);
							\coordinate (a2) at (-135:1.5);
							\coordinate (a3) at (-150:1.5);
							\coordinate (a4) at (-165:1.5);
							\coordinate (a5) at (-180:1.5);
							\draw[] (0,-1.5) arc[start angle = -90, end angle = -180, radius = 1.5];
						\end{scope}
						\begin{scope}[shift={(0, 6)}]
							\draw[]  (1.5,0) arc[start angle = 0, end angle = -90, radius = 1.5];
							\coordinate (b0) at (-20:1.5);
							\coordinate (b1) at (-40:1.5);
							\coordinate (b2) at (-50:1.5);
							\coordinate (b3) at (-70:1.5);
							\node[fill,circle, red, inner sep=0pt, minimum size=3pt] at (-10:1.5){};
							\node[fill,circle, red, inner sep=0pt, minimum size=3pt] at (-30:1.5){};
							\node[fill,circle, red, inner sep=0pt, minimum size=3pt] at (-45:1.5){};
							\node[fill,circle, red, inner sep=0pt, minimum size=3pt] at (-60:1.5){};
							\node[fill,circle, red, inner sep=0pt, minimum size=3pt] at (-80:1.5){};\end{scope}
						\draw[ bend left = 10, thick,green] (a4) to (b0);
						\draw[ bend left=20, thick,green] (a3) to (b1);
						\draw[bend right = 20, thick,green] (d4) to (b2);
						\draw[bend right=10, thick,green] (d5) to (b3);
						\begin{scope}[shift={(6,0)}]
							\draw[]  (-1.5,0) arc[start angle = 180, end angle = 90, radius = 1.5];
							\coordinate (c0) at (110:1.5);
							\coordinate (c1) at (130:1.5);
							\coordinate (c2) at (140:1.5);
							\coordinate (c3) at (160:1.5);
							\node[fill,circle, red, inner sep=0pt, minimum size=3pt] at (100:1.5){};
							\node[fill,circle, red, inner sep=0pt, minimum size=3pt] at (120:1.5){};
							\node[fill,circle, red, inner sep=0pt, minimum size=3pt] at (135:1.5){};
							\node[fill,circle, red, inner sep=0pt, minimum size=3pt] at (150:1.5){};
							\node[fill,circle, red, inner sep=0pt, minimum size=3pt] at (170:1.5){};
						\end{scope}
						\draw[bend right=10, thick,green] (a0) to (c0);
						\draw[bend right = 20,  thick,green] (a1) to (c1);
						\draw[bend left = 20, thick,green] (d2) to (c2);
						\draw[bend left = 10, thick,green] (d1) to (c3);
						\draw[ thick,green] (a2) to (d3);
						
						\node at (2.5,-.3)  {\small $\ell_3$};
						\node at (2.5,6.3)  {\small $\ell_3$};
						\node at (-.3,3.5)  {\small $\ell_1$};
						\node at (6.3,3.5)  {\small $\ell_1$};
						\node at (4, 3.5)  {\small $\ell_2$};
					\end{scope}
					\draw[decoration={markings, mark=at position 0.3 with {\arrow{>}}}, postaction={decorate}, gray] (3,0) -- (3,6);
					\node at (2.8, 4)  {\small $s'$};
					\draw[decoration={markings, mark=at position 0.3 with {\arrow{>}}}, postaction={decorate}, gray] (0,3) -- (6,3);
					\node at (4,2.8)  {\small $t'$};
					\draw[decoration={markings, mark=at position 0.7 with {\arrow{>}}}, postaction={decorate}](82:1.5) to (80:1.498);
					\node at (81:1.8)  {\small $\alpha_1$};
					\foreach \n in {0, 2, 4}
					{
						\node at (70-1.5*\n:1.7) [circle, fill, inner sep=.1pt]{};
					}
					
					\draw[decoration={markings, mark=at position 0.7 with {\arrow{>}}}, postaction={decorate}](53:1.5) to (51:1.498);
					\node at (52:1.8)  {\footnotesize $\alpha_p$};
					\draw[decoration={markings, mark=at position 0.7 with {\arrow{>}}}, postaction={decorate}](39:1.5) to (37:1.498);
					\node at (37:1.9)  {\footnotesize $\delta_1 $};
					\foreach \n in {0, 2, 4}
					{
						\node at (26-1.5*\n:1.7) [circle, fill, inner sep=.1pt]{};
					}
					\draw[decoration={markings, mark=at position 0.7 with {\arrow{>}}}, postaction={decorate}](9:1.5) to (7:1.498);
					\node at (8:1.9)  {\footnotesize $\delta_v$};
					\begin{scope}[shift={(6,6)}, rotate = 180, transform shape]
						\draw[ decoration={markings, mark=at position 0.7 with {\arrow{>}}}, postaction={decorate}](82:1.5) to (80:1.498);
						\foreach \n in {0, 2, 4}
						{
							\node at (70-1.5*\n:1.7) [circle, fill, inner sep=.1pt]{};
						}
						\draw[ decoration={markings, mark=at position 0.8 with {\arrow{>}}}, postaction={decorate}](49:1.5) to (47:1.498);
						\draw[decoration={markings, mark=at position 0.7 with {\arrow{>}}}, postaction={decorate}](35:1.5) to (33:1.498);
						\foreach \n in {0, 2, 4}
						{
							\node at (26-1.5*\n:1.7) [circle, fill, inner sep=.1pt]{};
						}
						\draw[decoration={markings, mark=at position 0.7 with {\arrow{>}}}, postaction={decorate}](8:1.5) to (6:1.498);
					\end{scope}
					
					\begin{scope}[shift={(6,6)}]
						\node at (262:1.9)  {\small $\beta_1$};
						\node at (232:1.9)  {\footnotesize $\beta_q$};
						\node at (217:1.9)  {\footnotesize $\gamma_1 $};
						\node at (188:1.9)  {\footnotesize $\gamma_u $};
					\end{scope}
				\end{tikzpicture}
				\caption{A presentation of the surface of genus $1$ with one boundary component which contains   $p+q+u+v-2$ stops. Here the two arcs $\ell_1$ and $\ell_3$ are identified along the same direction so that the quotient space is $S$. }
				\label{figure:2polygons}
			\end{figure}
			
			Then using  formula \eqref{align:windingnumbersegments} we compute the winding numbers of the two non-separating simple closed curves $s', t'$ in Figure \ref{figure:2polygons}:
			\begin{align*}
				w_\eta(s') &=|\delta_1|+\dotsb+|\delta_v|-|\gamma_1|-\dotsb-|\gamma_u|  =0 \\
				w_\eta(t') &= -|\alpha_1|-\dotsb-|\alpha_p|+|\beta_1|+\dotsb+|\beta_q|=0.
			\end{align*}
			Since $S$ has only one boundary component we have $w_\eta(\partial S) = -2$ by the Poincar\'e--Hopf index theorem \eqref{algin:PHindex}. It follows that  $$\w{\mathcal A}(\eta) = \gcd\{w_\eta(s'), w_\eta(t'),  w_\eta(\partial S)+2\} =0 =\gcd\{ w_\eta(\partial S)+2\}.$$ Then by Theorem \ref{Thm:partialsiltingmainresult} there is a pre-silting object in $\per(A)$ which is not partial silting.

			In fact, we may construct such a pre-silting object explicitly. By Proposition \ref{prop:standardsation} the algebra $A$ is derived equivalent to a graded gentle algebra of the form $(g=1; m_{1}=p+q+u+v-2)$ (see Definition \ref{Def:standardform} or Remark \ref{Rem:mb=1}):
			\begin{align*}
				\begin{tikzpicture}[scale=1.2]
					\begin{scope}
						\node[circle, inner sep=1pt, minimum size=3pt] (a6) at (5.25,0) {\small $ 1'$};
						\node[circle, inner sep=1pt, minimum size=3pt] (a7) at (6.25,0) {\small $2'$};
						\node[circle, inner sep=1pt, minimum size=3pt] (a8) at (7.25,0) {\small $3'$};
						\node[circle, inner sep=1pt, minimum size=3pt] (a9) at (8.25,0) {$\dotsb$};
						\node[circle, inner sep=1pt, minimum size=3pt] (a10) at (10,0) {};
						\draw[transform canvas={yshift=1.5em},->]  (a6) -- node[above]{$\alpha_1'$}  (a7) ;
						\draw[transform canvas={yshift=0em},<-] (a6) --node[above]{$\beta_1'$}(a7);
						\draw[transform canvas={yshift=-1.5em},->] (a6) --node[above]{$\gamma_1'$} (a7);
						\draw[transform canvas={yshift=0em},->] (a7) -- node[above]{\small $\delta_1'$}  (a8); 
						\draw[transform canvas={yshift=0em},->] (a8) -- node[above]{\small $\delta_{2}'$}  (a9); 
						\draw[transform canvas={yshift=0em},->] (a9) -- node[above]{\small $\delta_{p+q+u+v-3}'$}  (a10); 
					\end{scope}
				\end{tikzpicture}
			\end{align*}
			with $|\za_{1}'|+|\zb_{1}'|=1=|\zb_{1}'|+|\zg_{1}'|$.   Take $P = e_3'A'$.  By choosing a standard admissible dissection as in Figure \ref{Fig:StandardSurfaceModel}, we obtain a triangle equivalence  $F:\per(A')\xra{\simeq} \per(A)$ so that $F(P)$ is given by  
			\[
			e_{t(\gamma_1)} A\xrightarrow{\gamma_1} e_2 A \xrightarrow{\alpha_p} e_{s(\alpha_p)} A,
			\]
			where $\gamma_1, \alpha_p$ are arrows in the quiver $Q$. 
			By Corollary \ref{Cor:prenotpartial}, $F(P)$ is a  pre-silting object in $\per(A)$ which is not partial silting.
		\end{Ex}

		\begin{Ex}\label{Ex:notpartial2}
			Let $A$ be the ungraded gentle algebra given by the following quiver 
			\[
			\xymatrix@R=5pc{
				1\ar@/^1.5pc/[rr]^-{\theta} \ar@<-.3pc>[r]_-{\beta} & 2 \ar@<-.3pc>[l]_-{\alpha} \ar[r]^-{\gamma} & 3\ar@<.3pc>[r]^-{\zeta} \ar@<-.3pc>[r]_-{\delta} & 4 & 
			}
			\]
			with relations $\{\beta\alpha, \alpha \theta, \theta \delta, \gamma \zeta\}.$
			
			Note that the surface model of a $A$ is such that $(S, M, \eta, \Delta)$ where $S$ is of genus $1$ with two boundary components, each of which has only one stop. It is illustrated in Figure \ref{Fig:8gon} where the arcs with the same labelings (i.e.\ $\ell_1, \ell_2, \ell_4$) are identified so that the quotient space is the surface $S$.  
			
			Then using  formula \eqref{align:windingnumbersegments} we compute the winding numbers of the two non-separating simple closed curves $s', t'$ illustrated in Figure \ref{Fig:8gon}
			\begin{align*}
				w_\eta(s') &=(1-|\alpha|-|\theta|)+|\gamma| =1 \\
				w_\eta(t') &= 1-|\alpha|-|\beta|=1.
			\end{align*}
			Similarly, the winding numbers of the two boundary components are  
			\begin{align*}
				w_\eta(\partial_0 S) &= (-3+|\beta|+|\alpha|+|\theta|+|\delta|)-|\theta|-|\eta| = -3\\
				w_\eta(\partial_1 S) &=(-1+|\gamma|+|\eta|) - |\alpha|-|\beta|-|\gamma|-|\delta|=-1.
			\end{align*}
			(One may also obtain that $w_\eta(\partial_1 S)= -1$ from $w_\eta(\partial_0 S)=-3$ and \eqref{algin:PHindex}.)  It follows that  $$\w{\mathcal A}(\eta) = \gcd(w_\eta(s'), w_\eta(t'),  w_\eta(\partial_0 S)+2, w_\eta(\partial_1 S)+2) =1 =\gcd\{ w_\eta(\partial_0 S)+2, w_\eta(\partial_1 S)+2\}.$$ Then by Theorem \ref{Thm:partialsiltingmainresult} there is a pre-silting object in $\per(A)$ which is not partial silting.

			\begin{figure}[ht]
				\begin{tikzpicture}[scale=0.3, transform shape]
					\begin{scope}
						\foreach \x in {-3, -2, -1, 0}
						{
							\draw[thick, green] (\x*45-45:5) to (\x*45:5);
						}
						\draw[thick, green] (45:5) to (90:5);
						\draw[thick, green] (90:5) to (135:5);
						\draw[thick, green] (90:5) to (-45:5);
						\draw[] (135:5) to [curve through = {.. (157.5:4)..}] (180:5);
						\node[fill,circle, red, inner sep=0pt, minimum size=6pt] at (157.5:4){};
						\draw[] (0:5) to [curve through = {.. (22.5:4)..}] (45:5);
						\node[fill,circle, red, inner sep=0pt, minimum size=6pt] at (22.5:4){};
						\foreach \x in {1,2,3}
						{\begin{scope}[shift={(-\x*45:5)}]
								\coordinate (a\x) at (112.5-\x*45:1);
								\coordinate (b\x) at (247.5-\x*45:1);
								\coordinate (f\x) at (247.5-\x*45:2);
							\end{scope}
						}

						\begin{scope}[shift={(-45:5)}]
							\coordinate (c1) at (112.5:1);
							\draw[white, line width=5pt] (0,0) -- (a1);
							\draw[white, line width=5pt] (0,0) -- (b1);
							\draw[white, line width=5pt] (0,0) -- (112.5:1);
						\end{scope}
						
						\draw[white, line width=5pt] (-90:5) -- (b2);
						\draw[white, line width=5pt] (-90:5) -- (a2);
						\draw[white, line width=5pt] (-135:5) -- (b3);
						\draw[white, line width=5pt] (-135:5) -- (a3);

						\begin{scope}[shift={(90:5)}]
							\coordinate (d1) at (-22.5:1);
							\coordinate (g4) at (-22.5:2);
							\coordinate (d2) at (-67.5:1);
							\coordinate (d3) at (-157.5:1);
							\coordinate (g3) at (-157.5:2);
							\draw[white, line width=5pt] (0,0) -- (d1);
							\draw[white, line width=5pt] (0,0) -- (d3);
							\draw[white, line width=5pt] (0,0) -- (d2);
						\end{scope}
						
						\draw[bend left, decoration={markings, mark=at position 0.5 with {\arrow{>}}}, postaction={decorate}] (b1) to node[xshift=-8, yshift=3, above]{\huge $\theta$}   (c1); 
						
						\draw[bend left, decoration={markings, mark=at position 0.5 with {\arrow{>}}}, postaction={decorate}] (c1) to node[xshift=3, yshift=5, above]{\huge $\eta$}   (a1); 
						\draw[bend left, decoration={markings, mark=at position 0.5 with {\arrow{>}}}, postaction={decorate}] (b2) to node[xshift=5, yshift=5, above]{\huge $\alpha$}   (a2);
						\draw[bend left, decoration={markings, mark=at position 0.5 with {\arrow{>}}}, postaction={decorate}] (b3) to node[xshift=7, yshift=4, above]{\huge $\beta$}   (a3);
						\draw[bend left, decoration={markings, mark=at position 0.4 with {\arrow{>}}}, postaction={decorate}] (d1) to node[xshift= 7, yshift=-4, below]{\huge $\gamma$}   (d2); 
						\draw[bend left, decoration={markings, mark=at position 0.5 with {\arrow{>}}}, postaction={decorate}] (d2) to node[xshift=-3, yshift=-7, below]{\huge $\delta$}   (d3); 
						\node at (-157.5:5.2)  {\huge $\ell_1$};
						\node at (-112.5:5.2)  {\huge $\ell_2$};
						\node at (-67.5:5.2)  {\huge $\ell_1$};
						\node at (-22.5:5.2)  {\huge $\ell_4$};
						\node at (67.5:5.2)  {\huge $\ell_2$};
						\node at (112.5:5.2)  {\huge $\ell_4$};
						\node at (22.5:2.5)  {\huge $\ell_3$};
						\draw[bend right=15, decoration={markings, mark=at position 0.5 with {\arrow{>}}}, postaction={decorate}] (f2) to (g4);
						\draw[bend left, decoration={markings, mark=at position 0.35 with {\arrow{>}}}, postaction={decorate}] (f3) to (f1);
						\node at (.7, -1.3)  {\huge $s'$};
						\node at (-2.4, -1.2)  {\huge $t'$};
					\end{scope}
				\end{tikzpicture}
				\caption{A presentation of the surface of genus $1$ with two boundary components, each of which has only one stop. Note that the dissection $\Delta$ (given by arcs in green) cuts the surface into two polygons.}
				\label{Fig:8gon}
			\end{figure}
		\end{Ex}

		\subsection{The proof of Theorem \ref{Thm:partialsiltingmainresult}}%
		This subsection is devoted to the proof of Theorem \ref{Thm:partialsiltingmainresult}.
		
		We need the following preparation. Let $A=\Bbbk Q/I$ be any
		graded gentle algebra (not necessarily homologically smooth or proper).  
		Recall from Remark \ref{rem:nonproper}  the set $C(A)$ of cyclic paths in $A$.  Consider the following assumption on $A$:
		\begin{equation}\label{conditionA}
			\mbox{The algebra $A$ is homologically smooth and $|p|<0$ for any $p\in C(A)$. }
		\end{equation}
		Recall that  $A$ is proper if and only if $C(A)=\emptyset$. Also note that
		if $A$ satisfies \eqref{conditionA}, then the precondition for $A$ in Theorem  \ref{Thm:siltingreduction} (b) holds.
		We have the  following  useful observation.
		\begin{Lem} \label{Lem:existsofsiltingforA_{e}}
			Let $A=\Bbbk Q/I$ be a graded gentle algebra satisfying \eqref{conditionA}.  Then the following hold.
			\begin{enumerate}[\rm (a)]
				\item Assume the surface model of $A$ is of  genus zero, then $\per(A)$ has silting objects. 
				\item Let $eA$ be a  pre-silting object in $\per(A)$ where $e=\sum_{k\in \mathcal I} e_k$  is a finite sum of some vertex idempotents. Then $A_e$ also satisfies  condition  $\eqref{conditionA}$.  As a result, if  the surface model of $A$ is of  genus zero then $\per(A_e)$ has silting objects.
			\end{enumerate}
		\end{Lem}
		\begin{proof}
			(a) 
			This directly follows from Theorem \ref{Thm:existence_of_silting}.

			(b) By \cite[Lemma 4.7]{CJS},  $A_{e}$ is  homologically smooth. Let $q$ be any element in $C(A_{e})$ of length $m$. To check the condition \eqref{conditionA}, we only need to show $|q|<0$. By Definition \ref{definition:left algebra} we may write $q$ as a path in $Q_e$ 
			\[  i_{1}\xra{[\za^{1}_{1}\cdots\za^{1}_{l(1)}]}  i_{2} \xra{[\za^{2}_{1}\cdots\za^{2}_{l(2)}]}  i_3\to \cdots\to
			i_{m} \xra{[\za^{m}_{1}\cdots\za^{m}_{l(m)}]} i_{1},\]
			where $t(\za^{j}_{i})\in \{e_i \mid i \in \mathcal I\}$ and $\za^{j}_{i}\za^{j}_{i+1}\in I, \za^{j}_{l(j)}\za^{j+1}_{1}\not\in I$ for any $1\le j\le m$ and $1\le i<l(j)$. Again the indices are taken modulo $m$. 
			
			Denote by $N$  the cardinality of the set $\{ j \mid l(j)>1\}$.
			If $N=0$, i.e.\ $l(j)=1$ for each $1\leq j \leq m$ then $q$ is an element in  $C(A)$ and thus by assumption we have $|q|<0$. 
			
			If $N\geq 1$ then there exist $1\leq j_1< j_2< \dotsb < j_N \leq m$ such that $l(j_1) , \dotsc, l(j_N) >1$. For each $1\leq k\leq N$ consider the following subpath of $q$  
			\[ i_{j_k}\xra{[\za^{j_k}_{1}\alpha_{2}^{j_k}\cdots\za^{j_k}_{l(j_k)}]}  i_{j_k+1} \xra{[\za^{j_k+1}_{1}]}  i_{j_k+2}\to  \cdots \to i_{j_{k+1}-1} \xra{[\za^{j_{k+1}-1}_{1}]}  i_{j_{k+1}} \xra{[\za^{j_{k+1}}_{1}\za^{j_{k+1}}_{2}\cdots\za^{j_{k+1}}_{l(j_{k+1})}]}  i_{j_{k+1}+1}.
			\]   
			Note that $\alpha_i^{j_k}\in eAe$ for each $1< i < l(j_k)$ and $\alpha_{l(j_k)}^{j_k} \alpha_1^{j_k+1}\dotsc \alpha_1^{j_{k+1}-1} \alpha_1^{j_{k+1}}\in eAe.$ Since $eA$ is pre-silting, $eAe$ is non-positive. It follows that $|\alpha_i^{j_k}|\leq 0$ for each $1< i < l(j_k)$, and $|\alpha_{l(j_k)}^{j_k}  \alpha_1^{j_k+1}\dotsc \alpha_1^{j_{k+1}-1} \alpha_1^{j_{k+1}}|\leq 0$. This yields $|\alpha_{2}^{j_k}\alpha_3^{j_k} \dotsc \alpha_{l(j_k)}^{j_k} \alpha_1^{j_{k}+1}\dotsc \alpha_1^{j_{k+1}-1} \alpha_1^{j_{k+1}}| \leq 0$ for $1\leq k\leq N$.  So by Definition \ref{definition:left algebra}, we have
			\[|q|=\sum_{j=1}^{m}\sum_{i=1}^{l(j)}\left(|\za^{j}_{i}|+1-l(j)\right)< \sum_{j=1}^{m}\sum_{i=1}^{l(j)}|\za^{j}_{i}| =\sum_{k=1}^N |\alpha_{2}^{j_k}\alpha_3^{j_k} \dotsc \alpha_{l(j_k)}^{j_k} \alpha_1^{j_{k}+1}\dotsc \alpha_1^{j_{k+1}-1} \alpha_1^{j_{k+1}}|  \leq 0.\]
			This shows that $A_e$ satisfies the condition  \eqref{conditionA}. The last statement then follows from (a) since the surface model of $A_e$ is also of genus zero by Theorem \ref{Thm:siltingreduction} (c). 
		\end{proof}

		The following geometric observation will be used later. 
		\begin{Lem}\label{lemma:toruscutting}
			Let $(S, M, \eta)$ be a graded surface with stops of genus $1$. Let $\{\ell_i\}_{i=1}^n$ be an admissible collection on $S$ such that the cut surface along the  $\{\ell_i\}_{i=1}^n$ is still of genus $1$. Then $\{\ell_i\}_{i=1}^n$ can be extended to an admissible dissection on $S$ such that the associated gentle algebra is given by the  quiver \begin{align}\label{align:quiverextension}
				\xymatrix@1{ &Q= 1 \ar@<1pc>[r]^-{\za_{1}} \ar@<-1pc>[r]^-{\zg_{1}}& 2 \ar[l]_-{\zb_{1}}\ar[r]^-{\delta_1} & *+++[F.]{Q'} \Big).
			} \end{align}
			and the vertices corresponding to $\{\ell_i\}_{i=1}^n$ are contained in $Q_0'$. Here, $\alpha_1\beta_1 = \beta_1 \gamma_1=\zg_{1}\zd_{1}=0$. 
		\end{Lem}
		\begin{proof}
			By assumption, there exist non-separating simple closed curves $s', t'$, which induce a symplectic basis of $\mathrm H_{1}(\bar{S})$,  such that $s'$ and $t'$ do not intersect $\{\ell_i\}_{i=1}^n$ (otherwise, the cut surface would be of genus zero). We may cut the surface $S$ along $s', t'$ into a $4$-gon, see Figure \ref{figure:torus4gon}, so that $\{\ell_i\}_{i=1}^n$ and the boundary components of $S$ lie in the interior of the $4$-gon. 
			
			\begin{figure}[ht]
				\begin{tikzpicture}[scale=.7,transform shape]
					\begin{scope}
						\draw[decoration={markings, mark=at position 0.55 with {\arrow{>}}}, postaction={decorate}, thick] (0,0) -- (0,5);
						\draw[decoration={markings, mark=at position 0.55 with {\arrow{>}}}, postaction={decorate}, thick] (0,5) -- (6,5);
						\draw[decoration={markings, mark=at position 0.55 with {\arrow{<}}}, postaction={decorate}, thick] (6,5) -- (6,0);
						\draw[decoration={markings, mark=at position 0.5 with {\arrow{<}}}, postaction={decorate}, thick] (6,0) -- (0,0);
						\node at (3,-.4)  {\small $s'$};
						\node at (3,5.4)  {\small $s'$};
						\node at (-.4,2.5)  {\small $t'$};
						\node at (6.4,2.5)  {\small $t'$};
						\begin{scope}[shift={(1.8,3)}]
							\draw[fill=gray!15] (0,0) circle (.7);
							\foreach \x in {0,1,...,11}
							{\coordinate (d\x) at (30*\x:.7);
							}
							\draw[thick, green] (d7) ..controls (-120:1).. (d9);
							\draw[thick, green, dotted] (d4) ..controls (150:1.2).. (d6);
							\node at (-135:1)  {\tiny $\ell_2$};
							\draw[thick, green, dotted] (d1) ..controls (20:1.2) and (-20:1.5).. (d11);
							\node[fill,circle, red, inner sep=0pt, minimum size=3pt] at (0:.7){};
							\node[fill,circle, red, inner sep=0pt, minimum size=3pt] at (150:.7){};
							\node[fill,circle, red, inner sep=0pt, minimum size=3pt] at (-120:.7){};
							\draw[decoration={markings, mark=at position 0.55 with {\arrow{<}}}, postaction={decorate}] (-41:.7) -- (-40:.701);
							\draw[decoration={markings, mark=at position 0.55 with {\arrow{<}}}, postaction={decorate}] (-71:.7) -- (-70:.701);
							\draw[decoration={markings, mark=at position 0.55 with {\arrow{<}}}, postaction={decorate}] (-161:.7) -- (-160:.701);
							\draw[decoration={markings, mark=at position 0.55 with {\arrow{<}}}, postaction={decorate}] (-251:.7) -- (-250:.701);
							\draw[decoration={markings, mark=at position 0.55 with {\arrow{<}}}, postaction={decorate}] (-299:.7) -- (-298:.701);
						\end{scope}
						\begin{scope}[shift={(4,3)}]
							\draw[fill=gray!15] (0,0) circle (.7);
							\foreach \x in {0,1,...,11}
							{\coordinate (c\x) at (30*\x:.7);
							}
							\node[fill,circle, red, inner sep=0pt, minimum size=3pt] at (60:.7){};
							\node[fill,circle, red, inner sep=0pt, minimum size=3pt] at (0:-.7){};
							\node[fill,circle, red, inner sep=0pt, minimum size=3pt] at (0:.7){};
							\draw[thick, green] (c1) ..controls (60:1).. (c3);
							\node at (45:1)  {\tiny $\ell_1$};
						\end{scope}
						\draw[thick, green] (d3) ..controls (3,4).. (c4);
						\draw[thick, green] (d10) ..controls (3,2).. (c7);
						\draw[thick, dotted, green] (c9) to (5,0);
						\draw[thick, dotted, green] (c10) to (6, 1);
						\draw[thick, dotted, green] (c11) ..controls (5.5, 2.5).. (5.5,5);
						\draw[thick, dotted, green] (c8) ..controls (3, 1).. (0,1);
						\node[xshift=.2cm, red, thick] at (c0) {$v$};
						\node at (2,.7)  {\tiny$\ell_2'$};
						\node at (5.5,1)  {\tiny$\ell_2'$};
						\node at (4.5,.6)  {\tiny$\ell_1'$};
						\node at (5.25,4.3)  {\tiny$\ell_1'$};
						\node at (3, 4.1)  {\tiny $\ell_3$};
						\node at (2.7,2)  {\tiny$\ell_4$};
						\begin{scope}[shift={(4,3)}]
							\node at (-40:1)  {\tiny$\gamma_1$};
							\node at (-65:1)  {\tiny $\beta_1$};
							\node at (-100:1)  {\tiny $\alpha_1$};
							\node at (-130:1)  {\tiny $\delta_1$};
							\draw[decoration={markings, mark=at position 0.55 with {\arrow{<}}}, postaction={decorate}] (-41:.7) -- (-40:.701);
							\draw[decoration={markings, mark=at position 0.55 with {\arrow{<}}}, postaction={decorate}] (-71:.7) -- (-70:.701);
							\draw[decoration={markings, mark=at position 0.55 with {\arrow{<}}}, postaction={decorate}] (-98:.7) -- (-97:.701);
							\draw[decoration={markings, mark=at position 0.55 with {\arrow{<}}}, postaction={decorate}] (-128:.7) -- (-127:.701);
							\draw[decoration={markings, mark=at position 0.55 with {\arrow{<}}}, postaction={decorate}] (-248:.7) -- (-247:.701);

						\end{scope}
					\end{scope}
				\end{tikzpicture}
				\caption{An example on how an admissible collection $\{\ell_i\}_{i=1}^n$ may be extended to an admissible dissection whose quiver is given by \eqref{align:quiverextension}.}
				\label{figure:torus4gon}
			\end{figure}
			
			Since $\{\ell_i\}_{i=1}^n$ forms an admissible collection, there exists a stop which connects to the edges of the $4$-gon without crossing the arcs $\{\ell_i\}_{i=1}^n$. Let $v$ be such a stop  as illustrated in Figure \ref{figure:torus4gon}.  Then we may extend $\{\ell_i\}_{i=1}^n$ to an admissible dissection such that next to $v$ there are exactly two new arcs $\ell_1', \ell_2'$, which are homotopic to $s'$ and $t'$ (after smoothing) respectively, and all the other new  (dotted) arcs are just around stops. This admissible dissection gives rise to the quiver $Q$. In particular, the arc $l_1'$ (resp.\ $l_2'$)  corresponds to the vertex $1$ (resp.\ $2$) in $Q$.
		\end{proof}

		The following lemma shows that $(i) \Rightarrow (ii)$ in Theorem \ref{Thm:partialsiltingmainresult}. We recall Definition \ref{Def:standardform} for the gentle algebras of standard form $(g; m_0,\dotsc, m_u; v)$. 
		\begin{Lem}\label{Lem:(a)to(b)}
			Let $A$ be a homologically smooth and proper  graded gentle algebra with  surface model $(S, M, \eta, \Delta)$.  If either $S$ is of genus $1$ with 
			\begin{equation} \label{equ:A=gcdofboundary}
				\w{\A}(\eta)=\gcd\{w_{\eta}(\partial_{0}S)+2, \dots, w_{\eta}(\partial_{b-1}S)+2\}, 
			\end{equation}
			or  $S$ is of genus strictly greater than $1$, 
			then there exists a pre-silting object in  $\per(A)$ which is not partial silting. 
		\end{Lem}
		\begin{proof}
			By Proposition \ref{prop:standardsation}, $A$ is derived equivalent to an algebra $A'$ of standard form $(g; m_0, \dotsc, m_u;0)$. The line field $\eta$ induces a grading of $A'$. Let $a_i,b_i,c_j$ be the numbers defined in \eqref{align:numbersofthesumgradingsAn}. Then by Remark \ref{Rem:furtherstandard}(2), we may assume that $a_1=0=b_1$ in both cases, and $a_i, b_i< 0$ for $2\leq i \leq g$ in the second case.

			Let $e=1-e_{1}-e_{2}$, where $e_1,e_2$ are the idempotents corresponding to the leftmost two vertices in the quiver $Q(g;m_0,\dotsc, m_u;0)$, see Definition \ref{Def:standardform}. Then $A'_{e}$ is of the standard form $(1;1;0)$ with $a_1=0=b_1$. Note that $eA'e$ is of the form $(g-1; m_0, \dotsc, m_u;0)$ with winding numbers $a_2,b_2, \dotsc, a_g, b_g <0$,  which cannot be of the standard form $(1;1;0)$ with winding numbers $0, 0$. Therefore, by Theorem \ref{Thm:existence_of_silting},  $\per(A'_{e})$ has no silting objects and $\per(eA'e)$ has a silting object. The latter induces a silting object, say $P$, in $\thick(eA')\subset \per(A')$ by Remark \ref{Rem:thm5.2b}. It follows from Theorem \ref{Thm:siltingreduction} (b) that $P$ cannot be partial silting since the set on the left hand side of the bijection is empty.
		\end{proof}
		
		Now we are ready to prove Theorem \ref{Thm:partialsiltingmainresult}.
		
		\begin{proof}[Proof of Theorem \ref{Thm:partialsiltingmainresult}]
			$(i) \Rightarrow (ii)$ is given by Lemma \ref{Lem:(a)to(b)}. 
			
			$(ii)\Rightarrow (i)$. Let $P$ be any pre-silting object in $\per(A)$. We need to show that $P$ is partial silting. By Remark \ref{Rem:siltingdissection}(1), $P$ gives rise to an admissible collection $\{\ell_i\}_{i=1}^n$ on $S$. Let $\Delta_P$ be any admissible dissection on $S$ extending  $\{\ell_i\}_{i=1}^n$ and let  $B$ be the graded gentle algebra corresponding to $\Delta_P$. Denote by  $e$ the sum of vertex idempotent in $B$ corresponding to  the arcs $\{\ell_i\}_{i=1}^n$. Then by \cite{HKK17} there is a triangle equivalence $F:\per(A)\simeq \per(B)$ such that $F(P)\cong eB$. Hence, $eB$ is a pre-silting object in $\per(B)$, and 
			to show $P$ is  partial silting in $\per(A)$, it  is equivalent to show that $eB$ is  partial silting in $\per(B)$. 
			For this, we claim that $\per(B_{e})$ has silting objects. Then the assertion follows from Theorem \ref{Thm:siltingreduction} (b).

			Let us prove this claim. Note that the cut surface $(S_{P}, M_{P}, \zD_{P}\backslash \{ \ell_{i}\}_{i=1}^{n})$   in  Theorem \ref{Thm:siltingreduction} (c) is a surface model of $B_{e}$, and that the genus of $S_{P}$ is not greater than the genus of $S$ (since cutting does not increase the genus of the surface).   We prove the claim by considering the following two cases.

			\textbf{Case 1}  \ \  The surface $S_{P}$ is of genus $0$. Then the claim follows directly from Lemma \ref{Lem:existsofsiltingforA_{e}} (b) since $eB$ is a pre-silting object in $\per(B)$. 
			
			\textbf{Case 2}  \ \  
			The  surface $S_{P}$ is of genus $1$. In this case $S$ has to be also of genus $1$ such that \eqref{equ:Anot=gcdofboudary} holds. By Lemma \ref{lemma:toruscutting}, we may assume that  $\Delta_P$ is such that the corresponding algebra $B$ is 
			given by the quiver of the following form
			\begin{align*}
				\xymatrix@1{ &Q= 1 \ar@<1pc>[r]^-{\za_{1}} \ar@<-1pc>[r]^-{\zg_{1}}& 2 \ar[l]_-{\zb_{1}}\ar[r]^-{\delta_1} & *+++[F.]{\widetilde Q}.
			} \end{align*}
			such that the vertices corresponding to $\{\ell_i\}_{i=1}^n$ are contained in $\widetilde Q_0$. Note that using formula \eqref{align:windingnumbersegments} we may calculate that 
			$w_{\eta}(s')=|\za_{1}|+|\zb_{1}|-1=a_1$ and $w_{\eta}(t')=|\zb_{1}|+|\zg_{1}|-1=b_1$, where $s', t'$ are the non-separating simple closed curves as in the proof of Lemma \ref{lemma:toruscutting}. In particular, they induce a symplectic basis of $\mathrm H_{1}(\bar{S})$. Then condition \eqref{equ:Anot=gcdofboudary} implies that $\gcd \{w_{\eta}(s'), w_{\eta}(t')\}\not=0$, see Remark \ref{Rem:thm5.6explanation}. So $(a_1, b_1) \neq (0,0).$

			Since  $\{\ell_i\}_{i=1}^n$ is contained in $\widetilde Q_{0}$, the quiver $Q_{e}$ of $B_{e}$ is of the following form (see Definition \ref{definition:left algebra})
			\begin{align*}
				\xymatrix{ &Q_e= 1 \ar@<1pc>[r]^-{\za_{1}} \ar@<-1pc>[r]^-{\zg_{1}}& 2 \ar[l]_-{\zb_{1}}\ar[r]^-{\delta_1} & *++[F.]{\widetilde Q_e}
				}
			\end{align*}
			Note that $B_e$ might not be proper but it satisfies the condition \eqref{conditionA} by Lemma \ref{Lem:existsofsiltingforA_{e}} (2). 
			Therefore, by Theorem \ref{Thm:existence_of_silting} $\per(B_{e})$ has silting objects by Theorem \ref{Thm:siltingreduction} (b). This proves the claim.
		\end{proof}

		\begin{proof}[Proof of Corollary \ref{Cor:prenotpartial}]
			$ (a) \Rightarrow (b)$ If there is a pre-silting object in $\per(A)$ which is not partial silting, then by Theorem \ref{Thm:partialsiltingmainresult}, the surface model of $A$ is either of genus $1$ with \begin{equation} 
				\w{\A}(\eta)=\gcd\{w_{\eta}(\partial_{0}S)+2, \dots, w_{\eta}(\partial_{b-1}S)+2\}, 
			\end{equation}
			or  $S$ is of genus strictly greater than $1$. By Remark \ref{Rem:furtherstandard}, $A$ is derived equivalent to a graded gentle algebra $B$ of standard form 
			$(g; m_0, \dotsc, m_u;0)$, such that the corresponding sum of gradings $a_{1}$ and $b_{1}$ defined in \eqref{align:numbersofthesumgradingsAn} are both equal to $1$ and $a_i, b_i< 1$ for $2\leq i \leq g$ in the second case. Let $e:=1-e_{1}-e_{2}\in B$. Then $B_{e}$ is of the standard form $(1;1;0)$ with $a_1=0=b_1$. Note that in this case $\per(eBe)$ cannot be of the form $(1;1;0)$ with $(a_1,b_1)=(0,0)$ and hence it has silting objects.   
			
			$(b) \Rightarrow (a)$  We only need to show that there is a pre-silting object in $\per(B)$ which is not partial silting.  This follows directly from 
			Theorem \ref{Thm:partialsiltingmainresult} if the surface $S$ associated  to $B$ is of genus strictly greater than $1$.  
			We are left to consider the case where $S$ is of genus $1$.  By assumption, there is an idempotent $e\in B$ such that a connected component of $B_{e}$ is  of the form $(1;1;0)$ with $(a_1,b_1)=(0,0)$ and thus $\per(B_e)$ has no silting objects by Theorem~\ref{Thm:existence_of_silting}.   On the other hand, since the surface model of $B_e$ is of genus one it follows that the arcs corresponding to $e$ are not isotopic to non-separating simple closed curves and thus $eBe$ is of genus $0$. By Theorem \ref{Thm:existence_of_silting} there is a silting object $P\in \thick(eB)\simeq \per(eBe)$. Then $P$ is a pre-silting object in $\per(B)$ which is not partial silting. 
		\end{proof}

		We end this section by showing that almost complete pre-silting objects are always partial silting. 
		
		\begin{Prop}\label{Cor:n-1prepartial}
			Suppose that  $A=\Bbbk Q/I$ is a homologically smooth and proper graded gentle algebra with $|Q_0|=n$. Let $P$ be a basic pre-silting object with $n-1$ indecomposable direct  summands. Then $P$ is partial silting.
		\end{Prop}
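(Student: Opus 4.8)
The strategy is to use silting reduction to reduce to a rank-one situation where we can compute directly. By Proposition~\ref{Prop:behaviorsiltingobjects}, the basic pre-silting object $P$ with $n-1$ indecomposable summands gives rise to a graded admissible collection of $n-1$ arcs in the surface model $(S,M,\eta,\zD)$ of $A$. Since $|Q_0|=n$, a graded admissible dissection of $(S,M)$ consists of exactly $n$ arcs (by \cite[Proposition 1.11]{APS19}, $|\zD|=|M|+b+2g-2=|Q_0|$), so the collection associated to $P$ is almost a full dissection: it is missing exactly one arc. We complete it to a graded admissible dissection, obtaining via \cite{HKK17} a triangle equivalence $F\colon\per(A)\xra{\simeq}\per(A')$ with $F(P)\cong eA'$ where $e$ is the sum of $n-1$ vertex idempotents of $A'$, say $e=1-e_j$ for a single vertex $j$. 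By Theorem~\ref{Prop:siltingreduction}, $P$ is partial silting if and only if $\per(A'_e)$ admits a silting object, and $A'_e$ is a homologically smooth and proper graded gentle algebra with a single vertex.

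\textbf{Key steps.} First I would invoke Proposition~\ref{Prop:behaviorsiltingobjects}(a) to get the graded admissible collection, then use the arc-counting identity to see exactly one arc is missing, and complete the collection to a dissection $\zD_P$; this is always possible since any graded admissible collection extends to a graded admissible dissection (the complement of the arcs in $S$ is cut into regions, and any region that is not a polygon with one boundary segment can be further subdivided by an admissible arc). Second, transport along the equivalence $F$ of \cite{HKK17} so that $P$ becomes $eA'$ for $e$ the idempotent supported on the $n-1$ vertices corresponding to the arcs of the collection. Third, apply Theorem~\ref{Prop:siltingreduction}(2): since $eA'$ is pre-silting, $\per(A')/\thick(eA')\simeq\per(A'_e)$ and silting objects of $\per(A'_e)$ correspond bijectively to silting objects of $\per(A')$ containing $eA'$ as a summand. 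Fourth, identify $A'_e$: it has exactly one vertex $Q_0\setminus\{j\}$... wait, $e$ has $n-1$ summands so $A'_e$ has $|Q_0|-(n-1)=1$ vertex by Definition~\ref{definition:left algebra}(1). A one-vertex graded gentle algebra is either $\Bbbk$ (no loops), $\Bbbk[x]/(x^2)$-type, or has one loop; since it must be homologically smooth and proper (by \cite[Proposition 4.2]{CJS}, as $eA'e$ and $A'$ are), the only possibilities are $\Bbbk$ or $\Bbbk[x]/(x^2)$ with some grading on $x$ — and crucially none of these is of the form $A^{(n)}$, which requires at least two vertices. Hence by Theorem~\ref{Thm:existence_of_silting} (or already Theorem~\ref{Thm:CJSsilting}(1)), $\per(A'_e)$ admits a silting object, so $P$ is partial silting.

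\textbf{Main obstacle.} The main point requiring care is the fourth step: pinning down the one-vertex graded gentle algebra $A'_e$ and confirming it is never of the exceptional form $A^{(1)}$ with $a_1=b_1=1$. A one-vertex gentle quiver has at most one loop (each vertex is source of at most two arrows and target of at most two arrows, but a loop counts on both sides, and the gentle relations constrain further), and in any case the exceptional algebra $A^{(1)}$ has two vertices, so the exceptional class is automatically avoided. One should also make sure that $eA'e$ is homologically smooth so that Theorem~\ref{Prop:siltingreduction} and the smoothness statement \cite[Proposition 4.2]{CJS} apply; this holds since $eA'$ is pre-silting and $A'$ is homologically smooth and proper — indeed $eA'e\cong\h^*(\shEnd(eA'))$ is the endomorphism algebra of a pre-silting object in a proper smooth category, hence itself proper, and smoothness of $A'_e$ then follows from \cite[Proposition 4.2]{CJS}. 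The remaining verifications — that the completion of an admissible collection to a dissection exists, and that winding numbers/gradings are correctly transported — are routine given the results already established in Sections~2 and~4.
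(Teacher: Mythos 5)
Your overall strategy (reduce via Proposition \ref{Prop:behaviorsiltingobjects} and Theorem \ref{Prop:siltingreduction} to the question of whether the one-vertex algebra $A'_e$ has a silting object) is the same as the paper's, but your treatment of the one-vertex algebra contains a genuine gap. You claim that $A'_e$ is homologically smooth \emph{and proper}, hence is $\Bbbk$ or a graded version of $\Bbbk[x]/(x^2)$, and then conclude via Theorem \ref{Thm:existence_of_silting} (or \cite[Theorem 5.9(1)]{CJS}) that $\per(A'_e)$ has a silting object because it is not of the form $A^{(n)}$. This is not correct: a one-vertex homologically smooth graded gentle algebra obtained by Definition \ref{definition:left algebra} is either $\Bbbk$ or the \emph{non-proper} algebra $\Bbbk[x]$ (the loop $x=[\alpha_1\cdots\alpha_k]$ has no square in $I_e$; conversely $\Bbbk[x]/(x^2)$ is not homologically smooth, so it cannot occur). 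Properness of $A'_e$ does not follow from properness of $eA'e$ — the cited \cite[Proposition 4.2]{CJS} needs $eA'e$ to be homologically smooth, which you have not established and which fails in general — and indeed $\Bbbk[x]$ is infinite dimensional no matter the grading. Consequently Theorem \ref{Thm:existence_of_silting} and \cite[Theorem 5.9]{CJS} simply do not apply to $A'_e$, and the conclusion you want can actually fail for such algebras: by Remark \ref{Rem:siltingnonexist}, $\per(\Bbbk[x])$ with $|x|=1$ admits no silting object. So "not of the form $A^{(n)}$" is not enough.

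The missing step — and the crux of the paper's argument — is to use the pre-silting hypothesis once more to control the grading of the loop. Since $P\cong eA'$ is pre-silting, $eA'e\cong\bigoplus_i\Hom(P,P[i])$ is concentrated in non-positive degrees; writing $x=[\alpha_1\cdots\alpha_k]$ with $|x|=\sum_i|\alpha_i|-k+1$ and comparing with the non-positively graded cycles of $eA'e$ forces $|x|\le 0$. Then $\Bbbk[x]$ is a non-positively graded (dg) algebra, hence is itself a silting object of $\per(\Bbbk[x])$, and Theorem \ref{Prop:siltingreduction} finishes the proof (the case $A'_e\cong\Bbbk$ being trivial). Your first three steps are fine and match the paper (the arc-count is not even needed: Proposition \ref{Prop:behaviorsiltingobjects} lets one take $P=eA$ with $e=1-e_i$ directly), but without the degree estimate on the loop the final step does not go through.
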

		
		\begin{proof}
			Let $(S, M, \eta, \Delta)$ be a  surface model of $A$. By Proposition~\ref{Prop:behaviorsiltingobjects}, $P$ gives rise to an admissible collection on $S$, which may be extended to an admissible dissection $\Delta'$ by adding one arc $\ell'.$ Denote by $A'=\Bbbk Q'/I'$ the graded gentle algebra corresponding to $\Delta'$. Then by \cite{HKK17} we have a triangle equivalence $F \colon \per(A) \to \per(A')$ which sends $P$ to $eA'$ for $e = 1-e'$, where $e'$ corresponds to the new arc $\ell'$. 
			
			By Theorem \ref{Thm:siltingreduction}, it suffices to show that $\per(A'_{e})$ has silting objects. For this, since by \cite[Lemma 4.7]{CJS}, $A_{e}$ is a homologically smooth graded gentle algebra  with only one vertex, we have either $A_{e}\cong \Bbbk$ or $A_{e}\cong \Bbbk[x]$. By Lemma \ref{Lem:existsofsiltingforA_{e}} (b) the algebra $A_e$ satisfies the condition  \eqref{conditionA}. Namely, $|x|< 0$ for the second case. Therefore, $A_e$ is a silting object in $\per(A_e)$.
		\end{proof}

		\appendix

		\section{Primitive closed curves have nonzero self-extensions}
		In this appendix, we will show the following result (see also \cite[Theorem 3.3]{OPS18}), which is used in the proof of Proposition \ref{Prop:behaviorsiltingobjects}. 
		
		\begin{Prop}\label{Prop:simpleclosedcurves}
			Let $A$ be a homologically smooth graded gentle algebra.
			Let $X$ be an indecomposable object in $\per(A)$ corresponding to a primitive closed curve.  Then $\Hom_{\per(A)}(X, X[1]) \neq 0$.
		\end{Prop}

		We fix the following convention. 
		
		\begin{Convention}\label{Convention:mappath}
			Let $i, j$ be two vertices in $Q_0$. Any path $p\notin I$ from $i$ to $j$ induces a nonzero morphism of right $A$-modules 
			\[
			p^*\colon P_j \to P_i, \quad  u \mapsto pu
			\]
			Here $P_i= e_i A$ (resp.\ $P_j=e_jA$) is the indecomposable projective right dg $A$-module corresponding to the vertex $i$ (resp.\ $j$). By abuse of notation, we shall identify the path $p$ with its corresponding map $p^*$. 
		\end{Convention}
		
		\begin{proof}[Proof of Proposition \ref{Prop:simpleclosedcurves}]

			By \cite[Subsection 4.1]{HKK17}, see also \cite{OPS18}, the indecomposable object $X$ (corresponding to a primitive closed curve) can  be read as a twisted complex 
			\[
			X=\left(\bigoplus_{i=1}^n P_i[k_i] \otimes V, \beta_0 \otimes T + \sum_{i=1}^{n-1} \beta_i \otimes \mathrm{Id}_V\right).
			\]
			Here, $P_i$ is the projective indecomposable at vertex $i$, $\beta_0$ is a path of degree $k_n-k_1+1$ from vertex $n$ to vertex $1$, which induces a dg  $A$-module morphism of degree one from $P_1[k_1]$ to $P_n[k_n]$ (compare Convention \ref{Convention:mappath}) and $V$ is a finite dimensional indecomposable $\Bbbk [T]$-module such that $T$ induces an automorphism of $V$.  Furthermore, $\beta_i$ is a path either from $i$ to $i+1$ or a path from $i+1$ to $i$ and the $k_i$ satisfy the following equation
			\[     
			k_{i+1} = \begin{cases}
				k_i +|\beta_{i}|-1 & \textrm{if $\beta_{i}$ is a path $i+1$ to $i$} \\
				k_i - |\beta_{i}|+ 1 & \textrm{if $\beta_{i}$ is a path $i$ to $i+1$,}
			\end{cases}
			\]
			for each $i\in \{1,\ldots, n-1\}$.

			We  consider the case where $\dim_\Bbbk V = 1$, the case of $\dim_\Bbbk V > 1$ following a similar argument. In this case, $T$ corresponds to the multiplication by a nonzero scalar $\zl$ in $\Bbbk$. Then up to changing $A$ by to a derived equivalent graded gentle algebra $A'$,  $X$ can always be illustrated as the following (infinitely  repeating) diagram 
			\begin{align*}\footnotesize
				\xleftarrow{\beta_{2k}^1} \underbrace{X_1^0 \xrightarrow{\alpha_1^1} X_1^1 \xrightarrow{} \dotsc \xrightarrow{\alpha_1^{i_1}} X_1^{i_1}} \overbrace{ \xleftarrow{\beta_2^{i_2}} X_2^{i_2} \xleftarrow{} \dotsc \leftarrow X_2^2 \xleftarrow{\beta_2^1} X_2^1}  \underbrace{\xrightarrow{\alpha_3^1} X_3^1 \xrightarrow{} \dotsc \xrightarrow{\alpha_3^{i_3}} X_3^{i_3}} \dotsc \overbrace{ \xleftarrow{\beta_{2k}^{i_{2k}}} X_{2k}^{i_{2k}} \xleftarrow{} \dotsc \xleftarrow{\beta_{2k}^1} X_{2k}^1} \xrightarrow{\alpha_1^1} 
			\end{align*}
			Here $X_1^0=X_{2k}^1$,  for some $k \geq 1$. The $X_i^j$'s are (shifted) indecomposable projective dg $A$-modules (e.g.\ $P_i[k_i]$), and the  $\alpha_i^j$'s and $\beta_i^j$'s are paths in $Q$ by Convention \ref{Convention:mappath}. Note that the arrows in each brace have the same direction (e.g.\ $\alpha_{2j-1}^i$'s go from the left to the right and $\beta_{2j}^i$'s go from the right to the left) and the composition of any two consecutive arrows in the same direction is zero. 
			We may assume that $i_1, i_2, \dotsc, i_{2k} >0$. 
			
			The simple closed curves and the arcs in the admissible dissection are assumed  to be in {\it minimal} position, see e.g.\ \cite[Subsection 2.2]{OPS18}. In particular, this implies that,  for each $1\leq j \leq k$, the paths $\alpha_{2j-1}^{i_{2j-1}}$ and $\beta_{2j}^{i_{2j}}$ have distinct starting arrows with the same starting vertex, and similarly the paths $\alpha_{2j+1}^1$ and $\beta_{2j}^{1}$ have  distinct ending arrows with the same ending vertex ($\alpha_{2k+1}^1:=\alpha_1^1$). This observation plays an important role in the following argument.

			Note that $\alpha_1^1$ (the diagonal map in the diagram below) induces a cocycle of degree one when viewed as an element in the Hom complex  $\shHom_A(X,X)$. 
			
			\begin{align*}\footnotesize
				\xymatrix@R=2.5pc{
					X_{2k}^2 \ar@{.>}@/_.8pc/[drr]|(.3){h_{2k,1}^{2,1}}&  \ar[l]_-{\beta_{2k}^1} X_1^0\ar[d]|(.4){h_{1,1}^{0,0}} \ar[r]^-{\alpha_1^1}  \ar[rd]^-{\alpha_1^1} &  X_1^1\ar[d]^-{h_{1,1}^{1,1}} \ar[r]^-{\alpha_1^2} & X_1^2 \ar[d]^-{h_{1,1}^{2,2}}  \ar[r]^-{\alpha_1^3} &  \dotsb  \ar[r]& X_1^{i_1-1} \ar[d]^-{h_{1,1}^{i_1-1,i_1-1}} \ar[r]^-{\alpha_1^{i_1}} &   X_1^{i_1} \ar[d]^-{h_{1,1}^{i_1,i_1}}    & X_2^{i_2} \ar[d]^-{h_{2,2}^{i_2,i_2}} \ar[l]_-{\beta_2^{i_2}} &  \ar[l]_-{\beta_2^{i_2-1}} X_2^{i_2-1} &    \dotsc \ar[l] &  X_{2k}^1  \ar[l]_-{\beta_{2k}^1} \ar[d]_-{h_{2k,2k}^{1,1}} &  \\
					X_{2k}^2 &  \ar[l]   X_1^0 \ar[r]_-{\alpha_1^1}  &  X_1^1  \ar[r]_-{\alpha_1^2}  & X_1^2 \ar[r]^-{\alpha_1^3} &  \dotsb  \ar[r]& X_1^{i_1-1}  \ar[r]_-{\alpha_1^{i_1}} &  X_1^{i_1}      & X_2^{i_2}  \ar[l]^-{\beta_2^{i_2}} &  \ar[l]^-{\beta_2^{i_2-1}} X_2^{i_2-1} &    \dotsc \ar[l] &  X_{2k}^1  \ar[l]_-{\beta_{2k}^1} & 
				}
			\end{align*}
			We claim that $\alpha_1^1$ is not a coboundary. Then  $\alpha_1^1$  induces a nonzero morphism in $\Hom_{\per(A)}(X,X[1])= \h^0 \shHom_A(X,X[1])$, which implies the assertion.

			To show this claim,   we follow the argument in \cite[the proof of Proposition 4.8]{ALP16}. Assume that $\alpha_1^1$ is a coboundary. Then there exists 
			\[h:=\sum_{i, j}\sum_{p,q} h_{i, p}^{j, q} \in \shHom^0_A(X,X) =\bop_{i,j}\bop_{p,q}\shHom_{A}^{0}(X_{i}^{j},X_{p}^{q}),\]
			where  $h_{i, p}^{j, q} \in \shHom^0_A(X_i^j, X_p^q)$ such that 
			\begin{equation}\label{equ:deltah}
				\delta(h) =\alpha_1^1\in \shHom_{A}^{1}(X_{1}^{0}, X_{1}^{1})\subset \shHom^1_A(X,X).
			\end{equation}
			Here, $\delta$ is the differential of $\shHom_A(X,X)$.
			Note that by the definition of $\delta$, we have 
			\[ \zd(h)=\sum_{i,j,p,q}\zd(h_{i,p}^{j,q}). \]
			Restricting  the equation  \eqref{equ:deltah} to the component $\shHom^1_A(X_1^0,  X_1^1)$ we obtain 
			\begin{align}\label{align:homotopy}
				\alpha_1^1 h_{1,1}^{0,0} - h_{1,1}^{1,1}  \alpha_1^1-h_{2k,1}^{2,1} \beta_{2k}^1= \alpha_1^1.  
			\end{align}
			Since the paths $\alpha_1^1$ and $\beta_{2k}^1$ have distinct ending arrows by the observation mentioned above, we infer that $h_{2k,1}^{2,1} \beta_{2k}^1$ and $ \alpha^1_1$ are  linearly independent (whatever $h_{2k,1}^{2,1}$ is).
			It follows from the equation \eqref{align:homotopy} that $h_{2k,1}^{2,1} \beta_{2k}^1 = 0$ and  $h_{1,1}^{0,0},\;  h_{1,1}^{1,1}$ are just multiplies of the corresponding idempotents.  By abuse of notation we may just view them as elements in $\Bbbk$, so we have 
			\begin{align}\label{align:homotopy1}
				h_{1,1}^{0,0} - h_{1,1}^{1,1} = 1.
			\end{align}
			Without loss of generality, let us assume that $h_{1,1}^{1,1} \neq 0$. 
			Restricting the equation  \eqref{equ:deltah} to the component $\shHom^1_A(X_1^1,  X_1^2)$ we obtain
			\begin{align}\label{align:homotopy1.5}
				\alpha_{1}^2 h_{1,1}^{1,1}  - h_{1,1}^{2, 2} \alpha_1^2 = 0.
			\end{align}
			Since $0\neq h_{1,1}^{1,1}\in\Bbbk$, it follows from the equation \eqref{align:homotopy1.5} that $h_{1,1}^{2,2}$ must be also a multiple of the corresponding idempotent and moreover $h_{1,1}^{1,1} =  h_{1,1}^{2,2}$. Applying the above argument to the components $\shHom^1_A(X_1^{j-1},  X_1^{j})$ for $2 \leq j \leq i_1-1$ we obtain 
			\begin{align}\label{align:homotopy2}
				0\neq h_{1,1}^{1,1} = h_{1,1}^{2,2} = \dotsb = h_{1,1}^{i_1-1,i_1-1} \in \Bbbk.
			\end{align}
			
			Again, restricting $\delta(h) =  \alpha^1_1$ to the component $\Hom^1_A(X_1^{i_1-1},  X_1^{i_1})$ we obtain
			\[
			\alpha_1^{i_1} h_{1,1}^{i_1-1,i_1-1}  + \beta_2^{i_2} h_{1, 2}^{i_1-1, i_2} -  h_{1,1}^{i_1,i_1} \alpha_1^{i_1} = 0.
			\]
			Since the paths $\alpha_1^{i_1}$ and $\beta_2^{i_2}$ have distinct starting arrows by the observation mentioned above, we infer that $\beta_2^{i_2} h_{1, 2}^{i_1-1, i_2} $ and $\alpha_1^{i_1}$ are linearly independent and thus $  \beta_2^{i_2} h_{1, 2}^{i_1-1, i_2}  =0$. Since by \eqref{align:homotopy2}, we have  $0\neq h_{1,1}^{i_1-1,i_1-1}\in \Bbbk$ and it follows that $h_{1,1}^{i_1,i_1}$ is a multiple of the idempotent. Moreover, 
			\begin{align}\label{align:homotopy3}
				h_{1,1}^{i_1-1, i_1-1} = h_{1,1}^{i_1, i_1}.
			\end{align}
			
			Repeating the above argument we may show that 
			\begin{align}\label{align:homotopy4}
				h_{1,1}^{i_1,i_1} = h_{2,2}^{i_2,i_2}=\dotsb =h_{2,2}^{1,1}=\dotsb= h_{2k,2k}^{i_{2k},i_{2k}} = \dotsb = h_{2k,2k}^{1,1}.
			\end{align}
			Note that $h_{2k,2k}^{1,1}$ is the same as $h_{1,1}^{0,0}$ since $X_{2k}^1=X_1^0$. Combing this with \eqref{align:homotopy2}-\eqref{align:homotopy4} we obtain that $h_{1,1}^{0,0}= h_{1,1}^{1,1}$ contradicting \eqref{align:homotopy1}. This proves that $\alpha^1_1$ is not a coboundary. 
		\end{proof}

		\begin{Rem}\label{remark:appendix}
			Denote by $[\alpha_1^1]$ the nonzero element in $\Hom_{\per(A)}(X,X[1])$ corresponding to the cocycle $\alpha_1^1$ considered in the proof of Proposition \ref{Prop:simpleclosedcurves}. Note that $$[\alpha_1^1]^2 =[\alpha_1^1\circ \alpha_1^1] =0 \in \Hom_{\per(A)}(X,X[2]).$$ It follows that the graded endomorphism ring of $X$ in $\per(A)$ contains a nonzero nilpotent element. 
		\end{Rem}

		\section*{Acknowledgements}
		We would like to thank Martin Kalck, Yu Zhou and  Alexandra Zvonareva for useful discussions. The first-named author would like to thank Osamu Iyama for helpful comments regarding partial silting objects and he also thanks the Institute of Algebra and Number Theory at the University of Stuttgart for the hospitality while working on this paper.



\begin{thebibliography}{10}
			\bibitem[AbH]{AbH}
			H. Abe and M. Hoshino, 
			\emph{On derived equivalences for selfinjective algebras}, 
			Comm. Algebra 34 (2006), no. 12, 4441--4452.
			
			\bibitem[AIR]{AIR}
			T. Adachi, O. Iyama and I. Reiten, 
			\emph{$\tau$-tilting theory}, 
			Compos. Math., 150(3):415--452, 2014.
			
			\bibitem[Ai]{Aihara13}
			T. Aihara, 
			\emph{Tilting-connected symmetric algebras}, 
			Algebr. Represent. Theory 16 (2013), no. 3, 873--894. 
			
			\bibitem[AI]{AI}
			T. Aihara and O. Iyama, 
			\emph{Silting mutation in triangulated categories}, 
			J. Lond. Math. Soc. (2) 85 (2012), no. 3, 633--668.
			
			\bibitem[AM]{AM}
			T. Aihara and Y. Mizuno, 
			\emph{Classifying tilting complexes over preprojective algebras of Dynkin type}, 
			Algebra Number Theory 11 (2017), no. 6, 1287--1315. 
			
			\bibitem[Am]{Am}
			C. Amiot, \emph{The derived category of surface algebras: the case of the torus with one boundary component}, Algebr. Represent. Theory 19 (2016), 1059-1080.
			
			\bibitem[AG]{AG}
			C. Amiot and Y. Grimeland, \emph{Derived invariants for surface algebras}, J. Pure Appl. Algebra 220 (2016), 3133-3155.
			
			
			
			\bibitem[APS]{APS19}
			C. Amiot, P-G. Plamondon and S. Schroll,
			\emph{A complete derived invariant for gentle algebras via winding numbers and Arf invariants},
			Selecta Math. (N.S.) 29 (2023), no. 2, 30. 
			
			
			
			\bibitem[ALP]{ALP16}
			K.K Arnesen, R. Laking and D. Pauksztello, 
			\emph{Morphisms between indecomposable complexes in the bounded derived category of a gentle algebra}, 
			J. Algebra 467 (2016), 1--46. 
			
			
			\bibitem[ABCP]{ABCP}
			I. Assem, T. Br\"ustle, G. Charbonneau-Jodoin, P.-G. Plamondon, Gentle algebras arising from surface trianulations, Algebra Number Theory 4 (2010) 201-229.
			
			\bibitem[AsH]{AsH}
			I. Assem, D. Happel, \emph{Generalized tilted algebras of type $A_n$}, Comm. Algebra 9 (1981), 2101--2125.
			
			
			
			\bibitem[AS]{AS87}
			I. Assem and A. Skowro\'{n}ski,
			\emph{Iterated tilted algebras of type affine $A_n$}, 
			Math. Z. 195 (1987), no. 2, 269-290.
			
			
			
			\bibitem[AAG]{AAG08}
			D. Avella-Alaminos, C. Geiss, 
			\emph{Combinatorial derived invariants for gentle algebras},
			J. Pure Appl. Algebra 212 (2008), no.1, 228--243.
			
			
			
			
			
			
			
			
			\bibitem[BBD]{BBD}
			A. A.~Beilinson, J.~Bernstein and P.~Deligne. 
			{\itshape Faisceaux pervers}.  
			Analysis and topology on singular spaces, I (Luminy, 1981), 5--171, Ast\'erisque, 100, Soc. Math. France, Paris, 1982.
			
			\bibitem[BCMPZ]{BCMPZ}
			R. Biswas, H. Chen, K. Manali Rahul, C. Parker, J. Zheng, \emph{Bounded $t$-structures, finitistic dimensions, and singularity categories of triangulated categories}, arXiv:2401.00130.
			
			
			\bibitem[B]{B}
			K. Bongartz, \emph{Tilted algebras}, Representations of algebras (Puebla, 1980), pp. 26--38, 
			Lecture Notes in Math., 903, Springer, Berlin-New York, 1981.
			
			
			
			
			
			
			
			\bibitem[BY]{BY}
			T. Br\"ustle and D. Yang, 
			\emph{Ordered exchange graphs}, 
			Advances in representation theory of algebras, 135--193, 
			EMS Ser. Congr. Rep., Eur. Math. Soc., Z\"urich, 2013.
			
			
			
			\bibitem[Ce]{Cecotti}
			S. Cecotti, \emph{Categorical tinkertoys for $N=2$ gauge theories}, Internat. J. Modern Phys. A 28 (2013), no. 5-6. 
			
			\bibitem[CJS]{CJS}
			W. Chang, H. Jin and S. Schroll, 
			\emph{Recollements of partially wrapped Fukaya categories and surface cuts}, 
			arXiv:2206.11196.
			
			
			
			
			
			\bibitem[CS1]{CS20}
			W. Chang and S. Schroll,
			\emph{A geometric realization of silting theory for gentle algebras},
			Math. Z. 303 (2023), no. 3, 67.
			
			\bibitem[CS2]{CS22}
			W. Chang and S. Schroll,
			\emph{Exceptional sequences in the derived category of a gentle algebra},
			Selecta Math. 29 (2023).
			
			\bibitem[CSS]{CSS}
			C. Chaparro, S. Schroll and A. Solotar, 
			\emph{On the Lie algebra structure of the first Hochschild cohomology of gentle algebras and Brauer graph algebras}, 
			J. Algebra 558 (2020), 293--326.
			
			\bibitem[CSSS]{CSSS}
			C. Chaparro, S. Schroll, A. Solotar and M. Suarez-Alvarez,
			\emph{The Hochschild cohomology and the Tamarkin-Tsygan calculus of gentle algebras}, 
			arXiv:2311.08003. 
			
			
			
			\bibitem[Ch]{C72}
			D.R. J.  Chillingworth, 
			\emph{Winding numbers on surfaces. I}, Math. Ann. 196 (1972), 218--249.
			
			
			
			\bibitem[H]{Hopf}
			H. Hopf, \emph{Differential Geometry in the Large}, Lecture Notes in Mathematics 1000. Springer, Berlin (1983).
			
			\bibitem[HKK]{HKK17}
			F. Haiden, L. Katzarkov and M. Kontsevich,
			\emph{Flat surfaces and stability structures},
			Publ. Math. Inst. Hautes \'Etudes Sci. 126 (2017), 247-318.
			
			
			\bibitem[IQZ]{IQZ}
			A. Ikeda, Y. Qiu and Y. Zhou, 
			\emph{Graded decorated marked surfaces: Calabi--Yau-$\mathbb{X}$ categories of gentle algebras}, 
			arXiv:2006.00009.
			
			
			\bibitem[IY]{IY}
			O. Iyama and D. Yang, 
			\emph{Silting reduction and Calabi--Yau reduction of triangulated categories}, 
			Trans. Amer. Math. Soc. 370 (2018), no. 11, 7861--7898.
			
			
			
			\bibitem[KaY]{KaY}
			M. Kalck and D. Yang, 
			\emph{Relative singularity categories II: DG models}, 
			arXiv:1803.08192.
			
			
			\bibitem[Ka]{K}
			N. Kawazumi, 
			\emph{The mapping class group orbits in the framings of compact surfaces}, 
			Q. J. Math. 69 (2018), no. 4, 1287--1302.
			
			
			\bibitem[Ke1]{Keller94}
			B. Keller, 
			\emph{Deriving DG categories}, 
			Ann. Sci. \'Ecole Norm. Sup. (4) 27 (1994), no. 1, 63--102.
			
			\bibitem[Ke2]{Ke98}
			B. Keller, 
			\emph{On the construction of triangle equivalence}, 
			Derived equivalences for group rings, 155--176, Lecture Notes in Math., 1685, Springer, Berlin, 1998.
			
			
			\bibitem[Ke3]{Keller06}
			B. Keller, 
			\emph{On differential graded categories}, 
			International Congress of Mathematicians. Vol. II, 151--190, Eur. Math. Soc., Z\"urich, 2006.
			
			\bibitem[KV]{KV88}
			B. Keller and D. Vossieck,
			\emph{Aisles in derived categories}, 
			Bull. Soc. Math. Belg. S\'er. A 40 (1988), no. 2, 239--253. 
			
			
			\bibitem[KoY]{KoY}
			S. Koenig and D. Yang, \emph{Silting objects, simple-minded collections, $t$-structures and co-$t$-structures for finite-dimensional algebras}, Doc. Math., 19:403--438, 2014.
			
			\bibitem[LF]{Labardini}
			D. Labardini-Fragoso, \emph{Quivers with potentials associated to triangulated surfaces}, Proc. Lond. Math. Soc. 98 (2009) 797-839.
			
			
			\bibitem[LP]{LP20}
			Y. Lekili and A. Polishchuk,
			\emph{Derived equivalences of gentle algebras via Fukaya categories},
			Math. Ann. 376 (2020), no. 1-2, 187-225.
			
			\bibitem[LZ]{LZ23}
			Y. Liu and Y. Zhou, 
			\emph{A negative answer to complement question for presilting complexes}, 
			J. Algebra 667 (2025), 282--304.
			
			
			
			
			\bibitem[Opp]{O19}
			S. Opper, 
			\emph{On auto-equivalences and complete derived invariants of gentle algebras},
			arXiv:1904.04859.
			
			
			
			\bibitem[OPS]{OPS18}
			S. Opper, G.-P. Plamondon and S. Schroll,
			\emph{A geometric model for the derived category of gentle algebras}, 
			arXiv:1801.09659.
			
			
			\bibitem[OZ]{OZ}
			S. Opper and A. Zvonareva, 
			\emph{Derived equivalence classification of Brauer graph algebras}, 
			Adv. Math. 402 (2022), 108341.
			
			\bibitem[Opm]{O}
			S. Oppermann, 
			\emph{Quivers for silting mutation}, 
			Adv. Math. 307 (2017), 684--714.
			
			
			\bibitem[QW]{QW}
			Y. Qiu and J. Woolf, 
			\emph{Contractible stability spaces and faithful braid group actions},
			Geom. Topol. 22(6):3701--3760, 2018.
			
			
			\bibitem[QZZ]{QZZ22}
			Y. Qiu, C. Zhang and Y. Zhou, 
			\emph{Two geometric models for graded skew-gentle algebras}, 
			arXiv:2212.10369v2.
			
			\bibitem[R]{Rickard89}
			J. Rickard, \emph{Morita theory for derived categories}, J. London Math. Soc. (2) 39 (1989), no. 3, 436--456.
			
			
			\bibitem[S]{Sc}
			S. Schroll, 
			\emph{Trivial extensions of gentle algebras and Brauer graph algebras}, 
			J. Algebra 444 (2015),183--200.
			
		\end{thebibliography}
	\end{document}